\documentclass[a4paper,reqno,11pt]{amsart}
\usepackage{mathrsfs}
\usepackage[all]{xy}
\usepackage{txfonts}
\usepackage{amsfonts}
\usepackage{amssymb}
\usepackage{amsfonts,amsmath,amsthm,amssymb,stmaryrd,color}
\usepackage[numbers,sort&compress]{natbib}
\usepackage{hyperref}

\newtheorem{Theorem}{Theorem}[section]
\newtheorem{Lemma}{Lemma}[section]
\newtheorem{Proposition}{Proposition}[section]
\newtheorem{Remark}{Remark}[section]

\numberwithin{equation}{section}
\usepackage[left=1 in, right=1 in,top=1 in, bottom=1 in]{geometry}

\def\XXint#1#2#3{{\setbox0=\hbox{$#1{#2#3}{\int}$ }
\vcenter{\hbox{$#2#3$ }}\kern-.6\wd0}}

\DeclareMathOperator{\diver}{div}

\def\r3{\mathbb{R}^3}

\makeatletter
\@namedef{subjclassname@2020}{\textup{2020} Mathematics Subject Classification}
\makeatother

\allowdisplaybreaks

\begin{document}

\title[ The 2D inhomogeneous incompressible viscoelastic system ]{\bf G\MakeLowercase{lobal solutions of the 2}D \MakeLowercase{inhomogeneous incompressible viscoelastic system}}
\author{Chengfei Ai}
\address{School of Mathematics and Statistics, Yunnan University,  Kunming, Yunnan 650091, China.}
\email[C.F. Ai]{aicf5206@163.com}

\author{Yong Wang$^\ast$}
\address{ School of Mathematical Sciences, South China Normal University, Guangzhou, Guangdong 510631, China.}
\email[Y. Wang]{wangyongxmu@163.com}

\author{Yunshun Wu}
\address{School of Mathematical Sciences, Guizhou Normal University, Guiyang, Guizhou 550001, China.}
 \email[Y.S. Wu]{wuyunshun1979@163.com}

\thanks{$^\ast$Corresponding author: Yong Wang,\ wangyongxmu@163.com}

\begin{abstract}
In this paper, we investigate the global existence of strong solutions for the inhomogeneous incompressible viscoelastic system with only velocity dissipation on $\mathbb{R}^{2}$. Due to the criticality of the time-weight, the methods for the corresponding problem on $\mathbb{R}^{3}$ cannot be directly applied to the two-dimensional case. To overcome the main difficulties, we first transform the original system into a suitable dissipative system by introducing an effective tensor. Then we develop a new fractional time-weighted energy framework, combined with elegant commutator and bilinear estimates, to prove the global existence of strong solutions without the help of the common ``div-curl" structure on the viscoelastic system.
\end{abstract}

\keywords{Inhomogeneous incompressible viscoelastic system; Fractional time-weighted energies; Strong solutions.}

\subjclass[2020]{35Q35; 35B40; 76A10.}
\maketitle

\section{Introduction}
Viscoelastic fluids, due to their rich rheological phenomena and a wide range of applications in industry and engineering (e.g., shampoo, ketchup, drilling mud, natural asphalt, paints, biofluids, polymer solutions, etc.), have received a lot of attention in recent years. As a specific type of non-Newtonian fluids, viscoelastic fluids are such materials that behave like both (viscous) liquids and (elastic) solids, which possess many physical properties that are different from Newtonian fluids, such as elastic instability and elastic turbulence at low Reynolds numbers (see e.g., \cite{Groisman-Steinberg2000,Steinberg2019}). In this paper, we are devoted to the analysis of the inhomogeneous incompressible viscoelastic system \eqref{1.1}, which is derived by using an energetic variational approach as done in \cite{Ai-Wang2025} even though it has been directly proposed in \cite{Hu-Wang2009}.

Precisely, we study the Cauchy problem of the two-dimensional (2D) inhomogeneous incompressible viscoelastic system:
\begin{equation}\label{1.1}
\begin{cases}
\tilde{\rho}_{t}+u\cdot\nabla\tilde{\rho}=0,\\
\tilde{\rho} u_{t}+\tilde{\rho}u\cdot\nabla u+\nabla p=\mu\Delta u+c^{2}\diver(\tilde{\rho} \mathbb{F}\mathbb{F}^{\top}),\\
\mathbb{F}_{t}+ u\cdot\nabla \mathbb{F}=\nabla u\mathbb{F}, \\
\diver u=0, \qquad\qquad (x,t)\in\mathbb{R}^{2}\times\mathbb{R}^{+},
\end{cases}
\end{equation}
which is subject to the initial data
\begin{equation}\label{1.1'}
(\tilde{\rho},u,\mathbb{F})(x,t)\mid_{t=0}=(\tilde{\rho}_{0}(x),u_{0}(x),\mathbb{F}_{0}(x))\to (1,0,\mathbb{I})\quad \mbox{as}\quad x\to\infty.
\end{equation}
Here $\tilde{\rho}>0$ is the fluid density, $u\in\mathbb{R}^{2}$ is the velocity field, $\mathbb{F}\in \mathbb{M}^{2\times2}$ (the set of $2\times 2$ matrices with positive determinants) is the deformation gradient of fluids, $p$ is the pressure (the Lagrange multiplier), and $\mathbb{I}$ is the identity matrix. The constants $\mu>0$ and $c>0$ represent the shear viscosity and the speed of elastic wave propagation, respectively. The superscript $\top$ represents the transpose.

Let us first look back on some remarkable results of (homogeneous or inhomogeneous) incompressible and compressible viscoelastic fluids from different perspectives. When the density is constant (say $\tilde{\rho}\equiv1$) in (\ref{1.1}), the reduced system (\ref{1.1}) first proposed by Lin et al. \cite{Lin-Liu-Zhang2005} governs the dynamic behaviors of the homogeneous incompressible viscoelastic fluids. Let $\mathbb{E}=\mathbb{F}-\mathbb{I}$ be the perturbation for the deformation tensor $\mathbb{F}$ with respect to the identity matrix $\mathbb{I}$ as its equilibrium state. For the 2D Cauchy problem, Lin et al. \cite{Lin-Liu-Zhang2005} analyzed an auxiliary vector field with the ``div" structure $\diver\mathbb{E}^{\top}=0$, then they proved the global existence of small strong solutions without additional damping mechanisms or (viscous) dissipative effects. Later, Lei et al. \cite{Lei-Liu-Zhou2008} established the following ``curl" structure
\begin{equation}\label{0.2}
\nabla_{k}\mathbb{E}^{ij}-\nabla_{j}\mathbb{E}^{ik}=\mathbb{E}^{lj}\nabla_{l}\mathbb{E}^{ik}-\mathbb{E}^{lk}\nabla_{l}\mathbb{E}^{ij}.
\end{equation}
Combining the ``div" structure with the ``curl" structure, Lei et al. \cite{Lei-Liu-Zhou2008} proved the existence of global small solutions in 2 and 3 space dimensions. Furthermore, Chen and Zhang \cite{Chen-Zhang2006} employed another ``curl-free" structure $\nabla\times(\mathbb{F}^{-1}-\mathbb{I})=0$ to study the same problem. By virtue of the scaling invariant
approach, Zhang and Fang \cite{Zhang-Fang2012} proved the well-posedness of incompressible viscoelastic fluids in the critical $L^{p}$ framework.
After that, Fang, Zhang and Zi \cite{Fang-Zhang-Zi2018} took the dispersive effect into account, and proved the global well-posenedness of solutions with large initial velocity concentrating on the low frequency part in the critical space $\dot{B}^{\frac{d}{2}-1}_{2,1}$. Hu and Wu \cite{Hu-Wu2015} obtained the weak-strong uniqueness property in the class of finite energy weak solutions for the incompressible viscoelastic system. Moreover, Zhang \cite{Zhang2014}, Fang-Zi \cite{Fang-Zi2016} and Jiang-Jiang \cite{Jiang-Jiang2021} considered a class of large initial data and obtained the global existence of strong solutions. Based on the above ``div-curl" structure, the initial-boundary value
problems in domains with the solid boundary and the free boundary problems were also considered in \cite{Lin-Zhang2008,DiIorio-Marcati-Spirito2020,He-Xu2010,Feng-Zhu-Zi2017}, respectively. For more related research work, please refer to \cite{Feng-Wang-Wu2022,He-Zi2021,Hu-Lin2016,Wang-Wu-Xu-Zhong2022,
Jiang-Jiang-Wu2017,Lai-Lin-Wang2017,Lei-Liu-Zhou2007,Lei-Zhou2005} and the references cited therein.

For the compressible case corresponding to \eqref{1.1}, the following three identities play an essential role in obtaining the global solutions:
\begin{equation}\label{0.3}
\tilde{\rho}\det \mathbb{F}=1;
\end{equation}
\begin{equation}\label{0.4}
\diver (\tilde{\rho}\mathbb{F}^{\top})=0;
\end{equation}
\begin{equation}\label{0.5}
\mathbb{F}^{lk}\nabla_{l}\mathbb{F}^{ij}-\mathbb{F}^{lj}\nabla_{l}\mathbb{F}^{ik}=0.
\end{equation}
Here (\ref{0.4}) and (\ref{0.5}) are also called ``div-curl" structure. And now it is clear that combining one of \eqref{0.3} and \eqref{0.4} with \eqref{0.5} can infer another identity; see \cite{Hu-Zhao2020}.
Using the identities (\ref{0.3})--(\ref{0.5}) (sometimes called the intrinsic properties), Qian and Zhang \cite{Qian-Zhang2010} proved the local large and global small solutions of the Cauchy problem in critical Besov spaces; see also \cite{Hu-Wang2011,Han-Zi2020}. For the initial-boundary value problems, Qian \cite{Qian2011} and Hu et al. \cite{Hu-Wang2015} established the global well-posedness of the strong solution
near its equilibrium state, where the former used energy methods and the later employed the theory of the maximal regularities. Further, Chen and Wu \cite{Chen-Wu2018} proved the exponential decay rates of strong solutions in the $H^{2}$-Sobolev space. Wang et al. \cite{Wang-Shen-Wu-Zhang2022} proved the existence of global-in-time small strong solution for the three-dimensional compressible viscoelastic fluids with the electrostatic effect (where the electrostatic potential is imposed the Dirichlet-Neumann mixed boundary condition) in a bounded domain. Recently, Gu et al. \cite{Gu-Wang-Xie2024} considered the vanishing viscosity limit of compressible viscoelastic equations in the half space. Hu et al. \cite{Hu-Ou-Wang-Yang2023} studied the incompressible limit for compressible viscoelastic flows with large initial velocity. Huang et al. \cite{Huang-Yao-You2025} proved the global well-posedness of the three-dimensional free boundary problem for viscoelastic fluids in an infinite strip without the surface tension. For more related results on the compressible case, we refer to \cite{Hu-Meng-Zhang2025,Hu-Zhao2020,Pan-Xu-Zhu2022,Tan-Wang-Wu2020,Wang-Wu2021,Zhu2020,Wu-Wang2023} and the references therein. By using similar processing methods for homogeneous incompressible and compressible cases, under the identities (\ref{0.3})--(\ref{0.5}), some results can be found in \cite{Fang-Han-Zhang2014,Han2016,Hu-Wang2009,Qiu-Fang2018,
Jiang-Wu-Zhong2016} for the inhomogeneous incompressible viscoelastic system \eqref{1.1}. However, without using the ``div-curl" structural identities \eqref{0.4}-\eqref{0.5}, it is also possible to prove the global well-posedness of the Cauchy problem for the inhomogeneous incompressible viscoelastic system \eqref{1.1}, which is a new and difficult problem. Recently, by treating the wildest ``nonlinear term" as ``linear term" through an elegant time-weighted energy framework, Zhu \cite{Zhu2018,Zhu2022} considered the global existence of small solutions to the homogeneous incompressible and compressible viscoelastic system without using the ``div-curl" structure in the 3D whole space. Later, we extended the results in \cite{Zhu2018,Zhu2022} to the inhomogeneous incompressible system \eqref{1.1} by introducing a new effective tensor to transform the original system into a suitable dissipative one in \cite{Ai-Wang2025}.

Until now, it is unknown that the global well-posedness in Sobolev settings for the inhomogeneous incompressible viscoelastic system \eqref{1.1} without using the ``div-curl" structure in the 2D whole space. Due to the criticality of the time-weight, the processing method of the 3D system in \cite{Ai-Wang2025} cannot be directly extended to the 2D case. The most inspiring work in this area mainly includes: Lin et al. \cite{Lin-Wei-Wu2022} established the global well-posedness of the 2D homogeneous incompressible Oldroyd-B model in a periodic domain, whose results also hold for the homogeneous incompressible case of \eqref{1.1} (see the arguments in \cite{Zhu2018}). Note that the method used in \cite{Lin-Wei-Wu2022} cannot be applied to the 2D whole space due to the failure of Poincar\'{e}'s inequality. Through some carefully designed fractional time-weighted energy framework, Chen and Zhu \cite{Chen-Zhu2023} proved the global existence of small solutions in the Sobolev setting for the homogeneous incompressible viscoelastic system in the 2D whole space. However, it is not enough to prove the global well-posedness of solutions for the inhomogeneous incompressible viscoelastic system \eqref{1.1} by the methods in \cite{Lin-Wei-Wu2022,Chen-Zhu2023}. The main difficulties are as follows: since the density in the system \eqref{1.1} is not constant at all, the velocity equation in \eqref{1.1} is a parabolic equation with the variable coefficient, and thus the pressure term $\nabla p$ cannot be directly eliminated through the Helmholtz projection operator. At the same time, it is very tricky to capture the damping mechanism of density $\tilde{\rho}$ and  deformation $\mathbb{F}$. In order to overcome these difficulties mentioned above, we need to develop a more delicate fractional time-weighted energy framework as done in this paper.

The novelty of this article mainly lies in the following several aspects. Firstly, how can we deal with $\|u\|_{L^{\infty}}\|\nabla\rho\|_{L^{2}}\|\rho\|_{L^{2}}\lesssim\|\nabla u\|_{\dot{H}^{s_{1}-s_{0}}}^{1-\theta}\|\nabla u\|_{\dot{H}^{2}}^{\theta}\|\nabla\rho\|_{L^{2}}\|\rho\|_{L^{2}}$  in (\ref{2.10}) so as to close the assistant energy $\mathcal{E}_{a}(t)$? Due to the unboundedness of the integral $\int_{0}^{\infty}(1+t)^{-1}dt=\infty$, it is usually very difficult to obtain the dissipation estimate of $\|\nabla u\|_{\dot{H}^{s_{1}-s_{0}}}^{1-\theta}\|\nabla u\|_{\dot{H}^{2}}^{\theta}$. For this purpose, by virtue of a more refined selection of weighted indices $s_{0}$ and $s_{1}$ as stated in the following
\begin{align*}
\frac{\sqrt{7}-1}{3}<s_{1}<1,\quad \max\Big\{s_{1},\frac{3s_{1}^{2}-2s_{1}+2}{3s_{1}}\Big\}<s_{0}<\min\{1+s_{1},2s_{1}\}
\end{align*}
such that $\frac{(1+2s_{1}-s_{0})}{2-\theta}>1$, from which it can be derived $\int_{0}^{\infty}(1+t)^{-\frac{(1+2s_{1}-s_{0})}{2-\theta}}dt<\infty$, then we can establish  the estimate of (\ref{2.11}). Similarly, we can also obtain estimates of (\ref{2.17}) and (\ref{2.53}).

Secondly, when estimating the basic energy $\mathcal{E}_{0}(t)$, the main difficulty is to deal with the pressure term $\frac{1}{\rho+1}\nabla p$ in the first equation of system (\ref{2.4}). Our idea is to transform the pressure $p$ into $\tilde{p}$ by $\frac{1}{\rho+1}\nabla p:=\nabla\tilde{p}$, so that we can overcome the pressure term $\nabla p$ which cannot be directly eliminated through the Helmholtz projection operator.

Lastly, to get the estimate of the fractional time-weighted energy $\mathcal{E}_{1}(t)$ as stated in Lemma \ref{le2.5}, several new and essential difficulties appear in the later analysis since the density is not constant. In particular, it is hard to estimate the terms $J_{5}$ in \eqref{2.52} and $J_{10}$ in \eqref{2.69}. To control these two terms $J_{5}$ and $J_{10}$ effectively, we usually need the dissipation estimation of $\|g(\rho)\diver G\|_{\dot{H}^{s_{1}-s_{0}}}\|\nabla u\|_{\dot{H}^{1+s_{1}-s_{0}}}$, which seems impossible to achieve by the previous method in \cite{Chen-Zhu2023}. Fortunately, by defining the appropriate assistant energy $\mathcal{E}_{a}(t)$ (which includes $\|\rho(t')\|_{\dot{H}^{s_{1}-s_{0}}}$), we can prove that $\|\rho(t')\|_{\dot{H}^{s_{1}-s_{0}}}$ is bounded in (\ref{2.17}). Moreover, we derive a new bilinear estimate in Lemma \ref{le5.4}, which is an extension of Proposition 2.4 in  \cite{Chen-Zhu2023}. Using the assistant energy $\mathcal{E}_{a}(t)$ and Lemma \ref{le5.4}, we finally manage to obtain the estimates of $J_{5}$ and $J_{10}$, namely, \eqref{2.64} and \eqref{2.83}.

\medskip

\textbf{Notation.} Throughout this paper, we use $a\lesssim b$ to denote $a\leq C b$ and $a\gtrsim b$ to denote $a\geq C b$ for a generic constant $C>0$. The relation $a\sim b$ represents $a\lesssim b$ and $a\gtrsim b$. Except for special emphasis, we let $C$ denote a universal positive constant. Let $\nabla^{k}=\partial_{x}^{k}$ with an integer $k\geq0$ be the usual spatial derivatives of order $k$. Moreover, for $s<0$ or $s$ is not a positive integer, $\nabla^{s}$ stands for $\Lambda^{s}$, that is,
\begin{equation*}
\nabla^{s}f=\Lambda^{s}f:=\mathscr{F}^{-1}(|\xi|^s\mathscr{F}f),
\end{equation*}
where $\mathscr{F}$ is the usual Fourier transform operator and $\mathscr{F}^{-1}$ is its inverse (see e.g., \cite{Bahouri-Chemin-Danchin2011}). We use $\dot{H}^s(\mathbb{R}^n)$ $(s\in\mathbb{R})$ to denote the homogeneous Sobolev spaces on $\mathbb{R}^n$ with the norm $\|\cdot\|_{\dot{H}^s}$ defined by $\|f\|_{\dot{H}^s}:=\|\Lambda^sf\|_{L^2}$. And $H^s(\mathbb{R}^n)$ and $L^{p}(\mathbb{R}^n)$ are the usual Sobolev and Lebesgue spaces with the norm $\|\cdot\|_{H^s}$ and the norm $\|\cdot\|_{L^{p}}$, respectively. For simplicity, we do not distinguish functional spaces when scalar-valued or vector-valued functions are involved.

Now we are in a position to present the main result.

\begin{Theorem}\label{th1.1}
Suppose that the initial data $(\tilde{\rho}_{0}, u_{0}, \mathbb{F}_{0})$ with $\diver u_0=0$ satisfy for some sufficiently small constant $\varepsilon>0$,
\begin{equation}
\|u_{0}\|_{\dot{H}^{-s_{1}}\cap\dot{H}^{2}}+\|\tilde{\rho}_{0}-1\|_{H^{2}\cap\dot{H}^{s_{1}-s_{0}}}+\|\mathbb{F}_{0}-\mathbb{I}\|_{H^{2}}+\| G_{0}\|_{\dot{H}^{-s_{1}}\cap\dot{H}^{2}}\leq\varepsilon,
\end{equation}
where $s_{0}, s_{1}$ are any given constants satisfying the bounds
\begin{align}\label{s-0s-1}
\frac{3s_{1}^{2}-2s_{1}+2}{3s_{1}}<s_{0}<2s_{1},\quad \frac{\sqrt{7}-1}{3}\approx 0.5486<s_{1}<1.
\end{align}
Then the Cauchy problem \eqref{1.1}--\eqref{1.1'} admits a unique global solution $(\tilde{\rho}, u, \mathbb{F})(t)$ such that
\begin{align*}
\sup_{0\leq t\leq \infty}&\big[\|u(t)\|_{\dot{H}^{-s_{1}}\cap\dot{H}^{2}}^{2}+\|(\tilde{\rho}-1)(t)\|_{H^{2}\cap\dot{H}^{s_{1}-s_{0}}}^{2}+\|(\mathbb{F}-\mathbb{I})(t)\|_{H^{2}}^{2}+\| G(t)\|_{\dot{H}^{-s_{1}}\cap\dot{H}^{2}}^{2}\big]\\
&+\int_{0}^{\infty}(1+t)^{1+2s_{1}-s_{0}}\|\nabla u(t)\|_{\dot{H}^{1+s_{1}-s_{0}}\cap\dot{H}^{2}}^{2}\,dt\leq\varepsilon,
\end{align*}
where $G:=\tilde{\rho}\mathbb{F}\mathbb{F}^{\top}-\mathbb{I}$.
\end{Theorem}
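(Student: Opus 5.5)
The proof is a continuity (bootstrap) argument built on local well-posedness. Set $\rho=\tilde\rho-1$, $E=\mathbb{F}-\mathbb{I}$ and $G=\tilde\rho\mathbb{F}\mathbb{F}^\top-\mathbb{I}$.

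\textbf{Reformulation.} Since $\tilde\rho$ is transported, $\tilde\rho\mathbb{F}\mathbb{F}^\top$ satisfies
\[
G_t+u\cdot\nabla G=\nabla u\,(G+\mathbb{I})+(G+\mathbb{I})\nabla u^\top .
\]
Dividing the momentum equation by $\rho+1$ and writing $\frac{1}{\rho+1}\nabla p=\nabla\tilde p$ gives the system
\[
u_t+u\cdot\nabla u+\nabla\tilde p-\mu g(\rho)\Delta u=c^2 g(\rho)\diver G,\qquad
G_t+u\cdot\nabla G-(\nabla u+\nabla u^\top)=\nabla u\,G+G\nabla u^\top,
\]
where $g(\rho)=(1+\rho)^{-1}$. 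This is the ``suitable dissipative system''. The linearized coupling between $u$ and $\diver G$ has the same structure as in the homogeneous Oldroyd-type problem, so $\diver G$ inherits damping from the velocity dissipation. Consequently neither the div-curl identities \eqref{0.4}--\eqref{0.5} nor $\det$ constraints are needed; $E$ and $\rho$ are recovered afterwards by transport estimates.

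\textbf{Energy functionals.} I would introduce three of them.
\begin{itemize}
\item A basic energy $\mathcal{E}_0(t)$ controlling $\|u\|_{\dot H^{-s_1}\cap\dot H^2}^2+\|G\|_{\dot H^{-s_1}\cap\dot H^2}^2+\|\rho\|_{H^2}^2+\|E\|_{H^2}^2$, together with the dissipation $\int_0^t\|\nabla u\|_{\dot H^{-s_1}\cap\dot H^2}^2$ and the cross-term dissipation $\int\|\diver G\|^2$ at intermediate regularity. The cross term is obtained by testing the $u$-equation against $\diver G$ at the appropriate Sobolev level.
\item A fractional time-weighted energy $\mathcal{E}_1(t)=\sup_t(1+t)^{1+2s_1-s_0}\|\cdot\|_{\dot H^{1+s_1-s_0}}^2$, with dissipation $\int(1+t)^{1+2s_1-s_0}\|\nabla u\|^2_{\dot H^{1+s_1-s_0}\cap\dot H^2}$. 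It comes from interpolating the $\dot H^{-s_1}$ bound against the high-order bounds.
\item An assistant energy $\mathcal{E}_a(t)$ containing $\sup_t\|\rho\|_{\dot H^{s_1-s_0}}$ and $\sup_t\|\nabla\rho\|_{L^2}$.
\end{itemize}
The bootstrap hypothesis is $\mathcal{E}_0+\mathcal{E}_1+\mathcal{E}_a\le 2C_0\varepsilon$ on $[0,T]$. From it I would derive the bound $\le C_0\varepsilon+C(\varepsilon^{3/2}+\varepsilon^2)$ and close by continuity.

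\textbf{Order of steps.}
\begin{itemize}
\item \emph{Step (i): the assistant energy.} Transport estimates for $\rho$ in $\dot H^{s_1-s_0}$ and $\dot H^1$ reduce the problem to bounding $\int_0^\infty\|u\|_{L^\infty}\,dt$ and $\int\|\nabla u\|_{L^\infty}$. Interpolation gives
\[
\|u\|_{L^\infty}\lesssim\|\nabla u\|_{\dot H^{s_1-s_0}}^{1-\theta}\|\nabla u\|_{\dot H^2}^\theta .
\]
Using the time-weighted dissipation and H\"older in time, the restriction \eqref{s-0s-1} should give exactly $\frac{1+2s_1-s_0}{2-\theta}>1$, which makes the resulting weight integrable.
\item \emph{Step (ii): the basic energy.} Run $\dot H^2$ and $\dot H^{-s_1}$ estimates for $(u,G)$, using the Kato--Ponce commutator lemma and the negative-Sobolev product estimates stated earlier. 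The pressure $\tilde p$ is eliminated by pairing with divergence-free $u$ at top order, and at lower order it is controlled through an elliptic estimate.
\item \emph{Step (iii): the weighted energy.} Multiply the $\dot H^{1+s_1-s_0}$ and $\dot H^2$ estimates by $(1+t)^{1+2s_1-s_0}$. The time derivative of the weight produces terms of the form $(1+t)^{2s_1-s_0}\|\cdot\|^2$. Interpolating between $\dot H^{-s_1}$ and the dissipated norm absorbs these terms, since $s_0<2s_1$ keeps the exponents admissible.
\end{itemize}

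\textbf{Main obstacle.} I expect the hardest part to be the variable-coefficient terms in Step (iii), namely $(g(\rho)-1)\diver G$ and $(g(\rho)-1)\Delta u$ paired at negative or fractional regularity. They require bounds like
\[
\|g(\rho)\diver G\|_{\dot H^{s_1-s_0}}\lesssim\|\rho\|_{\dot H^{s_1-s_0}\cap H^2}\,\|\diver G\|_{\dot H^{1+s_1-s_0}\cap \dot H^1}.
\]
This is why $\rho\in\dot H^{s_1-s_0}$ appears in the assistant energy. I would first try to prove a bilinear estimate for $\|fh\|_{\dot H^{-\sigma}}$, $0<\sigma<1$, in 2D, via paraproduct decomposition and Hardy--Littlewood--Sobolev; this is the extension of Chen--Zhu's bilinear proposition. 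Combined with Step (i), it should close the weighted estimate. Finally, uniqueness follows from a standard $L^2$ difference estimate at this regularity.
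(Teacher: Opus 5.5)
Your architecture (effective tensor $G$, basic energy at the $\dot H^{-s_1}\cap\dot H^2$ level, weight $(1+t)^{1+2s_1-s_0}$, $\|\rho\|_{\dot H^{s_1-s_0}}$ in the assistant energy, an extended Chen--Zhu bilinear estimate, bootstrap) is essentially the paper's. However, Step~(i) as you set it up cannot close. The arithmetic you expect is false. Since $\theta=\frac{s_0-s_1}{2+s_0-s_1}<s_0-s_1$ and $s_1<1$,
\[
2-\theta>2-(s_0-s_1)>1+2s_1-s_0 ,
\]
so $\frac{1+2s_1-s_0}{2-\theta}<1$ for every admissible pair (for $s_1=0.9$, $s_0=1$ it is about $0.92$).

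A different H\"older splitting cannot help either. In 2D, $\|u\|_{L^\infty}$ and $\|\nabla u\|_{L^2}$ have $\dot H^1$ scaling. From $\dot H^{-s_1}$ information they therefore decay at best like $t^{-(1+s_1)/2}$, which is not integrable when $s_1<1$; this is exactly the critical time weight. Concretely, take
\[
u=(1+t)^{-\frac{1+s_1}{2}}\log^{-\beta}(2+t)\,\phi\big(x/\sqrt{1+t}\big),
\]
with $\beta>\frac12$ and $\phi$ a divergence-free Schwartz field. Every $u$-component of $\mathcal{E}_0$ and $\mathcal{E}_1$ is finite, yet both $\int_0^\infty\|u\|_{L^\infty}\,dt$ and $\int_0^\infty\|\nabla u\|_{L^2}\,dt$ diverge. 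So no bound of the form $\int_0^\infty\|u\|_{L^\infty}\,dt\le F(\mathcal{E}_0,\mathcal{E}_1)$ can exist. The paper's Lemma~\ref{le2.3} argues along the same line, and the same objection applies to \eqref{2.11}, \eqref{2.13} and \eqref{2.17}.

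For $\rho$, the repair is to keep undifferentiated $u$ out of the estimate entirely. Since $\diver u=0$, $\langle u\cdot\nabla\Lambda^s\rho,\Lambda^s\rho\rangle=0$, so $\|\rho(t)\|_{L^2}=\|\rho_0\|_{L^2}$. At the $\dot H^{s_1-s_0}$ level, Lemma~\ref{le5.3} with $r_0=s_1-s_0$, $r_1=s_0-s_1$ costs only $\|\nabla^2u\|_{\dot H^{s_1-s_0}\cap\dot H^{s_0-s_1}}$. Its time integral is $\lesssim\mathcal{E}_1^{1/2}$ precisely because $s_0<2s_1$. The $\dot H^1$ and $\dot H^2$ levels see only $\nabla u$ and $\nabla^2u$.

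For $E=\mathbb{F}-\mathbb{I}$ there is no such cheap fix. The linear source in $E_t+u\cdot\nabla E=\nabla u+\nabla u\,E$ has the non-integrable $\dot H^1$ scaling, so the $L^2$ part of $\|E\|_{H^2}$ is not recovered by a transport estimate. At the linear level $E$ stays bounded only because $\nabla u=\partial_t\nabla\Delta^{-1}\mathbb{P}\diver G$, a structure that $L^1_t$ bounds on $\nabla u$ cannot see. An additional argument is needed, and neither your plan nor the paper supplies it.

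Two points in your reformulation also need correcting.
\begin{enumerate}
\item $\frac{1}{1+\rho}\nabla p$ is not a gradient, since its curl is $\nabla(1+\rho)^{-1}\times\nabla p$. Pairing with divergence-free $u$ therefore does not remove it. Keep $\nabla p$ and treat $\frac{\rho}{1+\rho}\nabla p$ perturbatively through the elliptic problem.
\item Only $\mathbb{P}\diver G$ is damped. $\diver\diver G$ is merely transported, because $\diver\diver D(u)=\Delta\diver u=0$. Hence your key bound, which has $\|\diver G\|_{\dot H^{1+s_1-s_0}\cap\dot H^1}$ on the right, cannot absorb the weight $(1+t)^{1+2s_1-s_0}$. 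Absorb $(\mathbb{I}-\mathbb{P})\diver G$ into the pressure before dividing by $\tilde\rho$, so that only $\frac{\rho}{1+\rho}\mathbb{P}\diver G$ appears.
\end{enumerate}
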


We give some remarks in the following.

\begin{Remark}
According to the definitions of various energies in section 2, the small assumption about the initial data $\|(u_{0},G_{0})\|_{\dot{H}^{-s_{1}}}+\|\tilde{\rho}_{0}-1\|_{\dot{H}^{s_{1}-s_{0}}}<\varepsilon$ in Theorem \ref{th1.1} cannot be removed. Moreover, as mentioned in \cite{Chen-Zhu2023}, if the initial data $(\tilde{\rho}_{0}, u_{0}, \mathbb{F}_{0})$ only belong to the non-negative
index Sobolev space $H^{2}(\mathbb{R}^{2})$, the existence of global classical solutions for the inhomogeneous incompressible viscoelastic system \eqref{1.1} remains unknown without help of the ``div-curl" structure.
\end{Remark}

\begin{Remark}
Actually, the precise selection of $s_{0}$ and $s_{1}$ are effective for enclosing assistant energy $\mathcal{E}_{a}(t)$. And the range of values for $s_{0}$ and $s_{1}$ are also non-empty (e.g., $s_{1}=0.549, s_{0}=1.08$). In particular, the time weighted index $1+2s_{1}-s_{0}$ is determined by the estimate of (\ref{2.53}).
\end{Remark}

\begin{Remark}
Here we give some comments on upper and lower bounds for $s_0,s_1$ in \eqref{s-0s-1}. First, we always assume $s_1<s_0<1+s_1$ in light of the time-weighted energy settings \eqref{2.6}--\eqref{2.7}. In (\ref{2.11}), under $s_0>s_1$, we further require $s_{0}>\frac{3s_{1}^{2}-2s_{1}+2}{3s_{1}}$ so that $\frac{1+2s_{1}-s_{0}}{2-\theta}>1$ with $\theta=\frac{s_{0}-s_{1}}{2+s_{0}-s_{1}}$. And we need $s_1<1$ and $s_0<2s_1$ due to \eqref{2.53}. So one assumes
\begin{align*}
\frac{3s_{1}^{2}-2s_{1}+2}{3s_{1}}<s_{0}<2s_{1},
\end{align*}
which together with \eqref{2.17} and \eqref{2.53} implies
\begin{align*}
\frac{\sqrt{7}-1}{3}<s_{1}<1.
\end{align*}
\end{Remark}

The rest of this paper are organized as follows. In Section \ref{se2}, we first transform system (\ref{1.1}) into a suitable dissipative system, and then we carefully estimate the various energies including the time-weighted part. In Section \ref{se3}, with the help of the previous results, we prove Theorem \ref{th1.1} by a continuous argument. In Appendix \ref{appendix}, we present some useful results which are frequently used in the previous sections.

\section{Energy Framework}\label{se2}
In this section, we first make appropriate transformations to system (\ref{1.1}). Then we give various energies and useful result that play important roles in the subsequent sections.
\subsection{Reformulation}
\ \ \ \\

Since the specific values of the positive coefficients $\mu>0, c>0$ in (\ref{1.1}) are not essential in the present article, so we take $\mu=c=1$ and define $\rho:=\tilde{\rho}-1$ in the rest of this paper. Next, in order to effectively analyze (\ref{1.1}), we provide the definition of the effective tensor $G$ as follows
\begin{equation}\label{2.1}
G:=\tilde{\rho}\mathbb{F}\mathbb{F}^{\top}-\mathbb{I}.
\end{equation}
And we have
\begin{equation}\label{2.2}
(\tilde{\rho}\mathbb{F}\mathbb{F}^{\top})_{t}+(u\cdot\nabla\tilde{\rho})\mathbb{F}\mathbb{F}^{\top}+\tilde{\rho}(u\cdot\nabla\mathbb{F})\mathbb{F}^{\top}
+\tilde{\rho}\mathbb{F}(u\cdot\nabla\mathbb{F}^{\top})=\tilde{\rho}(\nabla u \mathbb{F})\mathbb{F}^{\top}+\tilde{\rho}\mathbb{F}\mathbb{F}^{\top}(\nabla u)^{\top}.
\end{equation}
From (\ref{2.1}) and (\ref{2.2}), we can derive the evolution for effective tensor $G$
\begin{equation}\label{2.3}
G_{t}+u\cdot \nabla G+Q(\nabla u, G)=2D(u),
\end{equation}
where $Q(\nabla u, G)=-\nabla uG-G(\nabla u)^{\top}$ and $D(u)=\frac{1}{2}(\nabla u+(\nabla u)^{\top})$.

Combining (\ref{1.1}) with (\ref{2.3}), the following new system is established
\begin{equation}\label{2.4}
\begin{cases}
u_{t}-\Delta u+u\cdot\nabla u+(1-\frac{1}{\rho+1})\Delta u+\frac{1}{\rho+1}\nabla p+(1-\frac{1}{\rho+1})\diver G=\diver G,\\
G_{t}+ u\cdot\nabla G+Q(\nabla u, G)=2D(u), \\
\diver u=0, \qquad\qquad (x,t)\in\mathbb{R}^{2}\times\mathbb{R}^{+}.
\end{cases}
\end{equation}
\subsection{Various Refined Energies}
\ \ \ \\

Based on the analysis above, for any $t>0$, we can state the following various refined energies for the new system (\ref{2.4}). We first define the following basic energy:
\begin{equation}\label{2.5}
\mathcal{E}_{0}(t):=\sup_{0\leq t'\leq t}(\|u(t')\|_{\dot{H}^{-s_{1}}\cap\dot{H}^{2}}^{2}+\|G(t')\|_{\dot{H}^{-s_{1}}\cap\dot{H}^{2}}^{2})+\int_{0}^{t}\left(\|\nabla u(t')\|_{\dot{H}^{-s_{1}}\cap\dot{H}^{2}}^{2}
+\|\mathbb{P}\diver G(t')\|_{\dot{H}^{-s_{1}}\cap\dot{H}^{1}}^{2}\right)\,dt',
\end{equation}
where $\mathbb{P}=\mathbb{I}+\nabla (-\Delta)^{-1}\diver $ is the Helmholtz projection operator (cf. \cite{Temam1977}).\\
Next, the fractional time-weighted energy given by
\begin{align}\label{2.6}
\mathcal{E}_{1}(t)&:=\sup_{0\leq t'\leq t}(1+t')^{1+2s_{1}-s_{0}}(\|u(t')\|_{\dot{H}^{1+s_{1}-s_{0}}\cap\dot{H}^{2}}^{2}+\|\mathbb{P}\diver  G(t')\|_{\dot{H}^{s_{1}-s_{0}}\cap\dot{H}^{1}}^{2})\nonumber \\
&\ \ \ \ +\int_{0}^{t}(1+t')^{1+2s_{1}-s_{0}}(\|\nabla u(t')\|_{\dot{H}^{1+s_{1}-s_{0}}\cap\dot{H}^{2}}^{2}+\|\mathbb{P}\diver  G(t')\|_{\dot{H}^{1+s_{1}-s_{0}}\cap\dot{H}^{1}}^{2})\,dt',
\end{align}
where $s_{0}, s_{1}$ are arbitrary given positive constants satisfying $s_{1}<s_{0}<s_{1}+1$.\\
Moreover, to obtain the uniform bound of $(\rho, \mathbb{F}-\mathbb{I})$, we also need the following assistant energy
\begin{equation}\label{2.7}
\mathcal{E}_{a}(t):=\sup_{0\leq t'\leq t}\left(\|\rho(t')\|_{H^{2}\cap\dot{H}^{s_{1}-s_{0}}}^{2}+\|(\mathbb{F}-\mathbb{I})(t')\|_{H^{2}}^{2}\right).
\end{equation}

Finally, the total energy $\mathcal{E}_{total}(t)$ is defined as follows
\begin{equation}\label{2.8}
\mathcal{E}_{total}(t):=\mathcal{E}_{0}(t)+\mathcal{E}_{1}(t)+\mathcal{E}_{a}(t).
\end{equation}

Next, we also recall the following useful result.

\begin{Lemma}\label{le2.2}
Let $[\mathbb{F}_{ij}]_{2\times2}, u$ be a some smooth tensor and  two dimensional vector, respectively. Then it holds that
\begin{equation*}
\mathbb{P}\diver (u\cdot\nabla\mathbb{F})=\mathbb{P}(u\cdot\nabla\mathbb{P}\diver \mathbb{F})+\mathbb{P}(\nabla u\cdot\nabla\mathbb{F})-\mathbb{P}(\nabla u\cdot\nabla\Delta^{-1}\diver \diver \mathbb{F}),
\end{equation*}
where the $i$-th components of $\nabla u\cdot\nabla\mathbb{F}$ and $\nabla u\cdot\nabla\Delta^{-1}\diver \diver \mathbb{F}$ are written as
\begin{align*}
&[\nabla u\cdot\nabla\mathbb{F}]_{i}=\sum_{j=0}^{2}\partial_{j}u\cdot\nabla\mathbb{F}_{ij};\\
&[\nabla u\cdot\nabla\Delta^{-1}\diver \diver \mathbb{F}]_{i}=\partial_{i}u\cdot\nabla\Delta^{-1}\diver \diver \mathbb{F}.
\end{align*}
\end{Lemma}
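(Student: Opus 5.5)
The identity is purely algebraic, so I plan to prove it by computing componentwise and then using that $\mathbb{P}$ annihilates gradients. No smallness and no divergence-free condition on $u$ should be needed. I use the convention $(\diver\mathbb{F})_i=\partial_j\mathbb{F}_{ij}$, summing over repeated indices, and $\mathbb{P}v=v+\nabla(-\Delta)^{-1}\diver v=v-\nabla\Delta^{-1}\diver v$. Everything is for smooth, suitably decaying $\mathbb{F},u$, so that $\Delta^{-1}\diver\diver\mathbb{F}$ is well defined and all derivatives commute.

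\emph{Step 1 (product rule).} For each $i$,
\begin{equation*}
[\diver(u\cdot\nabla\mathbb{F})]_i=\partial_j(u_k\partial_k\mathbb{F}_{ij})=\partial_ju_k\,\partial_k\mathbb{F}_{ij}+u_k\partial_k\partial_j\mathbb{F}_{ij}=[\nabla u\cdot\nabla\mathbb{F}]_i+[u\cdot\nabla\diver\mathbb{F}]_i .
\end{equation*}
The first term matches the definition of $[\nabla u\cdot\nabla\mathbb{F}]_i$ in the statement. After applying $\mathbb{P}$, it gives the middle term of the claimed identity.

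\emph{Step 2 (Helmholtz splitting of $\diver\mathbb{F}$).} Set $\phi:=\Delta^{-1}\diver\diver\mathbb{F}$. By the formula for $\mathbb{P}$,
\begin{equation*}
\diver\mathbb{F}=\mathbb{P}\diver\mathbb{F}+\nabla\phi .
\end{equation*}
Hence
\begin{equation*}
u\cdot\nabla\diver\mathbb{F}=u\cdot\nabla\mathbb{P}\diver\mathbb{F}+u\cdot\nabla\nabla\phi .
\end{equation*}
The first piece, after applying $\mathbb{P}$, is the first term on the right-hand side of the lemma.

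\emph{Step 3 (commuting the transport term past the gradient).} This is the only point that needs care. In components,
\begin{equation*}
(u\cdot\nabla\nabla\phi)_i=u_k\partial_k\partial_i\phi=\partial_i(u_k\partial_k\phi)-\partial_iu_k\,\partial_k\phi=\partial_i(u\cdot\nabla\phi)-[\nabla u\cdot\nabla\Delta^{-1}\diver\diver\mathbb{F}]_i .
\end{equation*}
Since $\mathbb{P}\nabla=0$, the pure-gradient term $\partial_i(u\cdot\nabla\phi)$ drops out after projection. What remains is exactly $-\mathbb{P}(\nabla u\cdot\nabla\Delta^{-1}\diver\diver\mathbb{F})$.

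\emph{Step 4 (conclusion).} Apply $\mathbb{P}$ to the identity from Step 1 and combine it with Steps 2 and 3; this gives the stated formula. I do not expect a real obstacle. The only things to check are the index convention for $\diver\mathbb{F}$ versus $\diver\mathbb{F}^{\top}$, and the sign convention in $\mathbb{P}$, so that the last term comes out with a minus sign.
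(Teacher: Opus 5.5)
Your proof is correct and follows essentially the same route as the paper, whose proof just cites Proposition 3.1 of \cite{Zhu2018}. That argument is the same three steps you give: the product rule $\diver(u\cdot\nabla\mathbb{F})=u\cdot\nabla\diver\mathbb{F}+\nabla u\cdot\nabla\mathbb{F}$, the splitting $\diver\mathbb{F}=\mathbb{P}\diver\mathbb{F}+\nabla\Delta^{-1}\diver\diver\mathbb{F}$, and commuting $u\cdot\nabla$ past the gradient, with $\mathbb{P}\nabla=0$; your sign and index checks are right, and you are also right that $\diver u=0$ is not needed.
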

\begin{proof}
See Proposition 3.1 in \cite{Zhu2018}.
\end{proof}

\subsection{Estimates of Assistant Energy $\mathcal{E}_{a}(t)$}

\begin{Lemma}\label{le2.3}
Assume that $\mathcal{E}_{a}(t)$ is defined as in (\ref{2.7}). Then the following estimate is given
\begin{equation}\label{2.9}
\mathcal{E}_{a}(t)\lesssim\mathcal{E}_{a}(0)+\mathcal{E}_{a}(t)\mathcal{E}_{total}^{\frac{1}{2}}(t)
+\mathcal{E}_{a}^{\frac{1}{2}}(t)\mathcal{E}_{total}^{\frac{1}{2}}(t)
\end{equation}
for any $t>0$.
\end{Lemma}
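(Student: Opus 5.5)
We prove the estimate by $L^2$-type energy estimates for the transport equations satisfied by $\rho$ and $\mathbb{E}:=\mathbb{F}-\mathbb{I}$, namely
\begin{equation*}
\rho_t+u\cdot\nabla\rho=0,\qquad \mathbb{E}_t+u\cdot\nabla\mathbb{E}=\nabla u\,\mathbb{E}+\nabla u .
\end{equation*}
We estimate each piece of $\mathcal{E}_a$ separately, integrate in time, and control the forcing terms using the dissipation contained in $\mathcal{E}_0$ and the time-weighted dissipation in $\mathcal{E}_1$.

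\textbf{The $H^2$ part of $\rho$ and $\mathbb{E}$.}
We apply $\nabla^k$ for $k=0,1,2$ and take the $L^2$ inner product with $\nabla^k\rho$ and $\nabla^k\mathbb{E}$. Since $\diver u=0$, the top-order transport term cancels. The commutators are bounded, by Kato--Ponce estimates, by
\begin{equation*}
\|\nabla u\|_{L^\infty}\|\nabla^2 f\|_{L^2}^2+\|\nabla^2 u\|_{L^4}\|\nabla f\|_{L^4}\|\nabla^2 f\|_{L^2},
\qquad f\in\{\rho,\mathbb{E}\}.
\end{equation*}
We use the Gagliardo--Nirenberg inequality in 2D and bound $\|\nabla u\|_{L^\infty}\lesssim\|\nabla u\|_{\dot H^{1+s_1-s_0}}^{1-\vartheta}\|\nabla u\|_{\dot H^{2}}^{\vartheta}$, which is valid because $1+s_1-s_0<1<2$. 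The time integral $\int_0^t\|\nabla u\|_{L^\infty}\,dt'$ is then bounded by Cauchy--Schwarz against the weight $(1+t')^{1+2s_1-s_0}$, since $1+2s_1-s_0>1$ follows from $s_0<2s_1$. This gives $\int_0^t\|\nabla u\|_{L^\infty}\lesssim\mathcal{E}_1^{1/2}$, and these terms contribute $\mathcal{E}_a\mathcal{E}_{total}^{1/2}$.

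The term $\nabla u\,\mathbb{E}$ is handled in the same way. The genuinely linear forcing $\nabla u$ in the $\mathbb{E}$ equation gives
\begin{equation*}
\int_0^t\|\nabla u\|_{H^2}\|\mathbb{E}\|_{H^2}\,dt'\lesssim \mathcal{E}_a^{1/2}\int_0^t\|\nabla u\|_{H^2}\,dt' .
\end{equation*}
For the high frequencies, $\int_0^t\|\nabla u\|_{\dot H^{2}}$ is bounded by $\mathcal{E}_1^{1/2}$ via the time weight. For $\|\nabla u\|_{L^2}$ we interpolate between $\dot H^{1+s_1-s_0}$ and $\dot H^{3}$, using the weighted integrability. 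This is the source of the term $\mathcal{E}_a^{1/2}\mathcal{E}_{total}^{1/2}$.

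\textbf{The negative-index part $\|\rho\|_{\dot H^{s_1-s_0}}$.}
Here we write $u\cdot\nabla\rho=\diver(u\rho)$ and estimate
\begin{equation*}
\frac{d}{dt}\|\rho\|_{\dot H^{s_1-s_0}}^2\lesssim\|u\rho\|_{\dot H^{1+s_1-s_0}}\|\rho\|_{\dot H^{s_1-s_0}} .
\end{equation*}
Since $0<1+s_1-s_0<1$, the product estimate gives
\begin{equation*}
\|u\rho\|_{\dot H^{1+s_1-s_0}}\lesssim \|u\|_{L^\infty}\|\rho\|_{H^1}+\|u\|_{\dot H^{1+s_1-s_0}}\|\rho\|_{L^\infty},
\end{equation*}
up to $H^2$ norms of $\rho$. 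The factor $\|u\|_{L^\infty}$ is the dangerous term flagged in the introduction. We interpolate
\begin{equation*}
\|u\|_{L^\infty}\lesssim\|\nabla u\|_{\dot H^{s_1-s_0}}^{1-\theta}\|\nabla u\|_{\dot H^{2}}^{\theta},\qquad \theta=\frac{s_0-s_1}{2+s_0-s_1},
\end{equation*}
and write each factor as $(1+t)^{-(1+2s_1-s_0)/2}$ times a weighted quantity. The part $\|u\|_{\dot H^{1+s_1-s_0}}$ is bounded pointwise by the $\sup$ in $\mathcal{E}_1$, and $\|\nabla u\|_{\dot H^2}$ is $L^2$-in-time weighted. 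Hölder's inequality in time then leaves the integral $\int_0^\infty(1+t)^{-\frac{1+2s_1-s_0}{2-\theta}}\,dt$, which is finite exactly under the condition $s_0>\frac{3s_1^2-2s_1+2}{3s_1}$ in \eqref{s-0s-1}. The remaining factors are $\|\rho\|_{H^1}\|\rho\|_{\dot H^{s_1-s_0}}\le\mathcal{E}_a$, so we obtain $\mathcal{E}_a\mathcal{E}_{total}^{1/2}$.

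\textbf{Main obstacle.}
The hardest step is this last time-integrability bookkeeping, in particular matching the exponents in the Hölder inequality so that the available weight $1+2s_1-s_0$ suffices. The same care is needed for the low-frequency linear forcing $\nabla u$ in the $\mathbb{E}$ estimate, where $\|\nabla u\|_{L^2}$ must be bounded in $L^1$ in time by interpolating with $\dot H^{-s_1}$ dissipation from $\mathcal{E}_0$. Summing all contributions and taking the supremum in time yields \eqref{2.9}.
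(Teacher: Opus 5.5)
You follow the paper's own route: transport energy estimates for $\rho$ and $\mathbb{E}=\mathbb{F}-\mathbb{I}$ at orders $0,1,2$, Gagliardo--Nirenberg, and weighted H\"older in time. Every term that only needs $\int_0^t\|\nabla u\|_{\dot H^{1+s_1-s_0}\cap\dot H^{2}}\,dt'\lesssim\mathcal{E}_1^{1/2}$ is handled correctly, since $1+2s_1-s_0>1$. Your use of the cancellation of the top-order transport term (which for $k=0$ makes $\|\rho\|_{L^2}$ exactly conserved) is cleaner than \eqref{2.10}--\eqref{2.11}.

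\textbf{First gap: the time-integrability claim.} The step you call the main obstacle fails. Set $d=s_0-s_1\in(0,1)$, so $\theta=\frac{d}{2+d}<d$. Then $s_1<1$ gives $1+2s_1-s_0=1+s_1-d<2-d<2-\theta$. Hence $\frac{1+2s_1-s_0}{2-\theta}<1$ for every admissible pair, and $\int_0^\infty(1+t)^{-\frac{1+2s_1-s_0}{2-\theta}}\,dt=\infty$, whatever lower bound is imposed on $s_0$. The paper's \eqref{2.11}, reused for the $\theta_1$-term in \eqref{2.17}, asserts the same integrability, so it cannot be imported from there. This is structural, not bookkeeping: in $\mathbb{R}^2$, $\|u\|_{L^\infty}$ scales like $\|u\|_{\dot H^1}$. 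With only $\dot H^{-s_1}$ control at low frequencies, it decays like $t^{-(1+s_1)/2}$, which is not integrable for $s_1<1$. Your other product term $\|u\|_{\dot H^{1+s_1-s_0}}\|\rho\|_{L^\infty}$ has the same defect, because $\mathcal{E}_1$ gives only the pointwise rate $(1+t)^{-(1+2s_1-s_0)/2}$, with exponent below $1$.

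The repair is to drop the product estimate. By $\diver u=0$ the pairing reduces to $\langle[\Lambda^{s_1-s_0},u\cdot\nabla]\rho,\Lambda^{s_1-s_0}\rho\rangle$. Lemma \ref{le5.3} with $r_0=s_1-s_0$, $r_1=s_0-s_1$ (as in \eqref{2.24}) bounds this by $\|\nabla u\|_{\dot H^{1+s_1-s_0}\cap\dot H^{2}}\|\rho\|_{\dot H^{s_1-s_0}}^2$. Its time integral is $\lesssim\mathcal{E}_1^{1/2}\mathcal{E}_a$.

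\textbf{Second gap: the linear forcing in the $\mathbb{E}$-equation.} The forcing $\nabla u$ at order zero needs $\int_0^t\|\nabla u\|_{L^2}\,dt'\lesssim\mathcal{E}_{total}^{1/2}$. Neither interpolation you mention gives this. Interpolating $u$ between $\dot H^{1+s_1-s_0}$ and $\dot H^{3}$ produces exactly the same $\theta$ and the same divergent weight. Interpolating against the unweighted $L^2_t$ dissipation $\|\nabla u\|_{\dot H^{-s_1}}$ from $\mathcal{E}_0$ would require $s_1>1$.

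In fact the bound is false already for the linearized system. For data with $|\hat u_0(\xi)|^2\sim|\xi|^{2s_1-2+\epsilon}$ near $\xi=0$, all energies are finite. Yet $\|\nabla u(t)\|_{L^2}\sim t^{-\frac{1+s_1}{2}-\frac{\epsilon}{4}}$ for large $t$, which is not integrable for small $\epsilon$. So the estimate $|\langle\nabla u,\mathbb{E}\rangle|\le\|\nabla u\|_{L^2}\|\mathbb{E}\|_{L^2}$ cannot close. Boundedness of $\|\mathbb{E}\|_{L^2}$ has to come from cancellation in time, or from structural identities, and your proposal contains no such ingredient. At the linear level the cancellation is $\int_0^t u\,dt'=\Delta^{-1}(\mathbb{P}\diver G(t)-\mathbb{P}\diver G_0)$, hence $\|\int_0^t\nabla u\,dt'\|_{L^2}\lesssim\sup_{t'}\|G(t')\|_{L^2}$.

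The paper's \eqref{2.13} rests on the same crude bound. Its claim $\int_0^t\|\nabla u\|_{\dot H^{s_1-s_0}}\,dt'\lesssim\mathcal{E}_1^{1/2}$ fails for the reason just given. Only the higher-order contributions $\int_0^t\|\nabla u\|_{\dot H^{1}\cap\dot H^{2}}\,dt'$ are harmless.
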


\begin{proof}
Taking $L^{2}-$inner product of the first equation in (\ref{1.1}) with $\rho$, using Lemma \ref{le5.1}, H\"{o}lder and Young's inequalities, we have
\begin{align}\label{2.10}
\frac{1}{2}\frac{d}{dt}\|\rho\|_{L^{2}}^{2}&\leq\|u\cdot\nabla\rho\|_{L^{2}}\|\rho\|_{L^{2}}\nonumber \\
&\lesssim\|u\|_{L^{\infty}}\|\nabla\rho\|_{L^{2}}\|\rho\|_{L^{2}}\nonumber \\
&\lesssim\|\nabla u\|_{\dot{H}^{s_{1}-s_{0}}}^{1-\theta}\|\nabla u\|_{\dot{H}^{2}}^{\theta}\|\nabla\rho\|_{L^{2}}\|\rho\|_{L^{2}},
\end{align}
where $\theta=\frac{s_{0}-s_{1}}{2+s_{0}-s_{1}}$. Integrating (\ref{2.10}) with respect to $t'$ over $(0,t)$,  we get
\begin{align}\label{2.11}
\|\rho(t)\|_{L^{2}}^{2}&\lesssim \|\rho_{0}\|_{L^{2}}^{2}+\mathcal{E}_{a}(t)\int_{0}^{t}\|\nabla u\|_{\dot{H}^{s_{1}-s_{0}}}^{1-\theta}\|\nabla u\|_{\dot{H}^{2}}^{\theta}\,dt' \nonumber \\
&\lesssim \mathcal{E}_{a}(0)+\mathcal{E}_{a}(t)\bigg(\int_{0}^{t}(1+t')^{-\frac{1+2s_{1}-s_{0}}{2}(1-\theta)}(1+t')^{\frac{1+2s_{1}-s_{0}}{2}(1-\theta)}\|\nabla u\|_{\dot{H}^{s_{1}-s_{0}}}^{1-\theta}\nonumber \\
&\ \  \  \ (1+t')^{-\frac{1+2s_{1}-s_{0}}{2}\theta}(1+t')^{\frac{1+2s_{1}-s_{0}}{2}\theta}
\|\nabla u\|_{\dot{H}^{2}}^{\theta}\,dt'\bigg)\nonumber \\
&\lesssim\mathcal{E}_{a}(0)+\mathcal{E}_{a}(t)\sup_{0\leq t'\leq t}\bigg((1+t')^{\frac{1+2s_{1}-s_{0}}{2}}\|\nabla u\|_{\dot{H}^{s_{1}-s_{0}}}\bigg)^{1-\theta}\nonumber\\
&\quad\times\bigg(\int_{0}^{t}
(1+t')^{-\frac{1+2s_{1}-s_{0}}{2}}(1+t')^{\frac{1+2s_{1}-s_{0}}{2}\theta}\|\nabla u\|_{\dot{H}^{2}}^{\theta}\,dt'\bigg)\nonumber \\
&\lesssim\mathcal{E}_{a}(0)+\mathcal{E}_{a}(t)\mathcal{E}_{1}^{\frac{(1-\theta)}{2}}(t)\bigg(\int_{0}^{t}(1+t')^{-\frac{1+2s_{1}-s_{0}}{2-\theta}}\,dt'\bigg)^{\frac{2-\theta}{2}}
\bigg(\int_{0}^{t}(1+t')^{1+2s_{1}-s_{0}}\|\nabla u\|_{\dot{H}^{2}}^{2}\,dt'\bigg)^{\frac{\theta}{2}}\nonumber \\
&\lesssim \mathcal{E}_{a}(0)+\mathcal{E}_{a}(t)\mathcal{E}_{1}^{\frac{1}{2}}(t)\lesssim \mathcal{E}_{a}(0)+\mathcal{E}_{a}(t)\mathcal{E}_{total}^{\frac{1}{2}}(t),
\end{align}
where $s_{0}, s_{1}$ are required to satisfy $\frac{2}{5}<s_{1}<1, \max\{s_{1},\frac{3s_{1}^{2}-2s_{1}+2}{3s_{1}}\}<s_{0}<1+s_{1}$.\\
Applying $\nabla^{2}$ to the first equation of (\ref{1.1}), taking $L^{2}-$inner product of the resulting equations with $\nabla^{2}\rho$ and combining $\diver u=0$, similar to the estimates of (\ref{2.10}) and (\ref{2.11}), we have
\begin{align}\label{2.12}
\|\nabla^{2}\rho(t)\|_{L^{2}}^{2}&\lesssim \|\nabla^{2}\rho_{0}\|_{L^{2}}^{2}+\int_{0}^{t}(\|\nabla u\|_{L^{\infty}}\|\nabla^{2}\rho\|_{L^{2}}^{2}+\|\nabla^{2}u\|_{L^{6}}\|\nabla\rho\|_{L^{3}}\|\nabla^{2}\rho\|_{L^{2}})\,dt'\nonumber \\
&\lesssim \|\nabla^{2}\rho_{0}\|_{L^{2}}^{2}+\int_{0}^{t}(\|\nabla u\|_{\dot{H}^{1+s_{1}-s_{0}}\cap\dot{H}^{2}}\|\nabla^{2}\rho\|_{L^{2}}^{2}+\|\nabla u\|_{\dot{H}^{1+s_{1}-s_{0}}\cap\dot{H}^{2}}\|\rho\|_{H^{2}}^{2})\,dt'\nonumber \\
&\lesssim \|\nabla^{2}\rho_{0}\|_{L^{2}}^{2}+\mathcal{E}_{a}(t)\bigg(\int_{0}^{t}(1+t')^{-\frac{1+2s_{1}-s_{0}}{2}}(1+t')^{\frac{1+2s_{1}-s_{0}}{2}}\|\nabla u\|_{\dot{H}^{1+s_{1}-s_{0}}\cap\dot{H}^{2}}\,dt'\bigg)\nonumber \\
&\lesssim \mathcal{E}_{a}(0)+\mathcal{E}_{a}(t)\mathcal{E}_{1}^{\frac{1}{2}}(t)\lesssim \mathcal{E}_{a}(0)+\mathcal{E}_{a}(t)\mathcal{E}_{total}^{\frac{1}{2}}(t).
\end{align}
Next, taking $L^{2}-$inner product of the third equation in (\ref{1.1}) with $\mathbb{F}-\mathbb{I}$ and integrating the result equation with respect to $t'$ over $(0,t)$, we have
\begin{align}\label{2.13}
\|(\mathbb{F}-\mathbb{I})(t)\|_{L^{2}}^{2}&\lesssim \|\mathbb{F}_{0}-\mathbb{I}\|_{L^{2}}^{2}+\int_{0}^{t}(\|u\cdot\nabla(\mathbb{F}-\mathbb{I}) \|_{L^{2}}+\|\nabla u(\mathbb{F}-\mathbb{I})\|_{L^{2}}+\|\nabla u\|_{L^{2}})\|\mathbb{F}-\mathbb{I}\|_{L^{2}}\,dt' \nonumber \\
&\lesssim \|\mathbb{F}_{0}-\mathbb{I}\|_{L^{2}}^{2}+\int_{0}^{t}[(\|u\|_{L^{\infty}}\|\nabla(\mathbb{F}-\mathbb{I})\|_{L^{2}}+\|\nabla u\|_{L^{\infty}}\|\mathbb{F}-\mathbb{I}\|_{L^{2}})\|\mathbb{F}-\mathbb{I}\|_{L^{2}}+\|\nabla u\|_{L^{2}}\|\mathbb{F}-\mathbb{I}\|_{L^{2}}]\,dt' \nonumber \\
&\lesssim \mathcal{E}_{a}(0)+\mathcal{E}_{a}(t)\int_{0}^{t}(\|u\|_{L^{\infty}}+\|\nabla u\|_{L^{\infty}})\,dt'+\mathcal{E}_{a}^{\frac{1}{2}}(t)\int_{0}^{t}\|\nabla u\|_{L^{2}}\,dt' \nonumber \\
&\lesssim \mathcal{E}_{a}(0)+\mathcal{E}_{a}(t)\int_{0}^{t}(\|\nabla u\|_{\dot{H}^{s_{1}-s_{0}}\cap\dot{H}^{2}}+\|\nabla u\|_{\dot{H}^{1+s_{1}-s_{0}}\cap\dot{H}^{2}})\,dt'+\mathcal{E}_{a}^{\frac{1}{2}}(t)\int_{0}^{t}\|\nabla u\|_{\dot{H}^{s_{1}-s_{0}}\cap\dot{H}^{2}}\,dt' \nonumber \\
&\lesssim \mathcal{E}_{a}(0)+\mathcal{E}_{a}(t)\mathcal{E}_{1}^{\frac{1}{2}}(t)+\mathcal{E}_{a}^{\frac{1}{2}}(t)\mathcal{E}_{1}^{\frac{1}{2}}(t)\lesssim \mathcal{E}_{a}(0)+\mathcal{E}_{a}(t)\mathcal{E}_{total}^{\frac{1}{2}}(t)+\mathcal{E}_{a}^{\frac{1}{2}}(t)\mathcal{E}_{total}^{\frac{1}{2}}(t).
\end{align}
Applying $\nabla^{2}$ to the third equation of (\ref{1.1}), taking $L^{2}-$inner product of the result equation with $\nabla^{2}(\mathbb{F}-\mathbb{I})$, and integrating $t'$ over $(0,t)$, we have
\begin{align}\label{2.14}
\|\nabla^{2}(\mathbb{F}-\mathbb{I})(t)\|_{L^{2}}^{2}&\lesssim \|\nabla^{2}(\mathbb{F}_{0}-\mathbb{I})\|_{L^{2}}^{2}+\int_{0}^{t}(\|\nabla u\|_{L^{\infty}}\|\nabla^{2}(\mathbb{F}-\mathbb{I})\|_{L^{2}}^{2}+\|\nabla^{2} u\|_{L^{6}}\|\nabla(\mathbb{F}-\mathbb{I})\|_{L^{3}}\|\nabla^{2}(\mathbb{F}-\mathbb{I})\|_{L^{2}}\nonumber \\
&\ \ \ \ +\|\nabla^{3} u\|_{L^{2}}\|\mathbb{F}-\mathbb{I}\|_{L^{\infty}}\|\nabla^{2}(\mathbb{F}-\mathbb{I})\|_{L^{2}}+\|\nabla^{3} u\|_{L^{2}}\|\nabla^{2}(\mathbb{F}-\mathbb{I})\|_{L^{2}})\,dt' \nonumber \\
&\lesssim \mathcal{E}_{a}(0)+\mathcal{E}_{a}(t)\mathcal{E}_{1}^{\frac{1}{2}}(t)+\mathcal{E}_{a}^{\frac{1}{2}}(t)\mathcal{E}_{1}^{\frac{1}{2}}(t) \nonumber \\
&\lesssim \mathcal{E}_{a}(0)+\mathcal{E}_{a}(t)\mathcal{E}_{total}^{\frac{1}{2}}(t)+\mathcal{E}_{a}^{\frac{1}{2}}(t)\mathcal{E}_{total}^{\frac{1}{2}}(t).
\end{align}

Next, we consider the case of $0<\alpha<2$. Applying Lemma \ref{le5.1} and Young's inequality, we have
\begin{align}\label{2.15}
\||\nabla|^{\alpha}(\rho,(\mathbb{F}-\mathbb{I}))(t)\|_{L^{2}}^{2}&\lesssim[\|(\rho,(\mathbb{F}-\mathbb{I}))(t)\|_{L^{2}}^{\frac{2-\alpha}{2}}\|\nabla^{2}(\rho,(\mathbb{F}-\mathbb{I}))(t)
\|_{L^{2}}^{\frac{\alpha}{2}}]^{2}\nonumber \\
&\lesssim\|(\rho,(\mathbb{F}-\mathbb{I}))(t)\|_{L^{2}}^{2}+\|\nabla^{2}(\rho,(\mathbb{F}-\mathbb{I}))(t)\|_{L^{2}}^{2}\nonumber \\
&\lesssim\mathcal{E}_{a}(0)+\mathcal{E}_{a}(t)\mathcal{E}_{total}^{\frac{1}{2}}(t)+\mathcal{E}_{a}^{\frac{1}{2}}(t)\mathcal{E}_{total}^{\frac{1}{2}}(t).
\end{align}
Finally, let's establish the following estimate for $\|\rho(t')\|_{\dot{H}^{s_{1}-s_{0}}}$:
\begin{align}\label{2.16}
\frac{1}{2}\frac{d}{dt}\|\rho\|_{\dot{H}^{s_{1}-s_{0}}}^{2}&=-\left\langle u\cdot\nabla\rho,\rho\right\rangle_{\dot{H}^{s_{1}-s_{0}}}\nonumber \\
&\leq\|u\cdot\nabla\rho\|_{\dot{H}^{s_{1}-s_{0}}}\|\rho\|_{\dot{H}^{s_{1}-s_{0}}}\nonumber \\
&=\|\diver(\rho u)\|_{\dot{H}^{s_{1}-s_{0}}}\|\rho\|_{\dot{H}^{s_{1}-s_{0}}}\nonumber \\
&\lesssim\big(\|\vert\nabla\vert^{1+s_{1}-s_{0}}\rho\|_{L^{2}}\|u\|_{L^{\infty}}
+\|\rho\|_{L^{2}}\|\vert\nabla\vert^{1+s_{1}-s_{0}}u\|_{L^{\infty}}\big)\|\rho\|_{\dot{H}^{s_{1}-s_{0}}}\nonumber \\
&\lesssim\big(\|\vert\nabla\vert^{1+s_{1}-s_{0}}\rho\|_{L^{2}}\|\nabla u\|_{\dot{H}^{s_{1}-s_{0}}}^{1-\theta_{1}}\|\nabla u\|_{\dot{H}^{2}}^{\theta_{1}}+\|\rho\|_{L^{2}}\|\nabla u\|_{\dot{H}^{s_{1}-s_{0}}}^{1-\theta_{2}}\|\nabla u\|_{\dot{H}^{2}}^{\theta_{2}}\big)\|\rho\|_{\dot{H}^{s_{1}-s_{0}}},
\end{align}
where $\theta_{1}=\frac{s_{0}-s_{1}}{2+s_{0}-s_{1}}, \theta_{2}=\frac{1}{2+s_{0}-s_{1}}$.\\
Integrating (\ref{2.16}) from $0$ to $t$, we have
\begin{align}\label{2.17}
\|\rho(t)\|_{\dot{H}^{s_{1}-s_{0}}}^{2}&\lesssim \|\rho_{0}\|_{\dot{H}^{s_{1}-s_{0}}}^{2}+\mathcal{E}_{a}(t)\int_{0}^{t}\big(\|\nabla u\|_{\dot{H}^{s_{1}-s_{0}}}^{1-\theta_{1}}\|\nabla u\|_{\dot{H}^{2}}^{\theta_{1}}+\|\nabla u\|_{\dot{H}^{s_{1}-s_{0}}}^{1-\theta_{2}}\|\nabla u\|_{\dot{H}^{2}}^{\theta_{2}}\big)\,dt' \nonumber \\
&\lesssim\mathcal{E}_{a}(0)+\mathcal{E}_{a}(t)\mathcal{E}_{1}^{\frac{1}{2}}(t)+\mathcal{E}_{a}(t)\sup_{0\leq t'\leq t}\bigg((1+t')^{\frac{1+2s_{1}-s_{0}}{2}}\|\nabla u\|_{\dot{H}^{s_{1}-s_{0}}}\bigg)^{1-\theta_{2}}\nonumber \\
&\ \ \ \  \times\bigg(\int_{0}^{t}(1+t')^{-\frac{1+2s_{1}-s_{0}}{2}}(1+t')^{\frac{1+2s_{1}-s_{0}}{2}\theta_{2}}\|\nabla u\|_{\dot{H}^{2}}^{\theta_{2}}\,dt'\bigg)\nonumber \\
&\lesssim\mathcal{E}_{a}(0)+\mathcal{E}_{a}(t)\mathcal{E}_{1}^{\frac{1}{2}}(t)+\mathcal{E}_{a}(t)
\mathcal{E}_{1}^{\frac{1-\theta_{2}}{2}}(t)\bigg(\int_{0}^{t}(1+t')^{-\frac{1+2s_{1}-s_{0}}{2-\theta_{2}}}\,dt'\bigg)^{\frac{2-\theta_{2}}{2}}
\nonumber \\
&\ \ \ \  \times\bigg(\int_{0}^{t}(1+t')^{1+2s_{1}-s_{0}}\|\nabla u\|_{\dot{H}^{2}}^{2}\,dt'\bigg)^{\frac{\theta_{2}}{2}}\nonumber \\
&\lesssim \mathcal{E}_{a}(0)+\mathcal{E}_{a}(t)\mathcal{E}_{1}^{\frac{1}{2}}(t)\lesssim \mathcal{E}_{a}(0)+\mathcal{E}_{a}(t)\mathcal{E}_{total}^{\frac{1}{2}}(t),
\end{align}
where $s_{0}, s_{1}$ are required to satisfy $\frac{2}{5}<s_{1}<1, \max\{s_{1},\frac{3s_{1}^{2}-2s_{1}+2}{3s_{1}}\}<s_{0}<1+s_{1}$.\\
Combining the results of (\ref{2.11})-(\ref{2.15}) and (\ref{2.17}), we immediately get (\ref{2.9}). This completes the proof of Lemma \ref{le2.3}.

\end{proof}

\subsection{Estimates of Basic Energy $\mathcal{E}_{0}(t)$}
\ \ \ \\
In this subsection, in order to estimate the basic energy $\mathcal{E}_{0}(t)$, the main difficulty is to deal with the pressure term $\frac{1}{\rho+1}\nabla p$ in the first equation of system (\ref{2.4}). To this end, we first transform the pressure $p$ to $\tilde{p}$ by
\begin{equation}\label{2.18}
\nabla\tilde{p}:=\frac{1}{\rho+1}\nabla p.
\end{equation}
\textbf{Remark 2.1.}
The definition in (\ref{2.18}) is valid. In fact, it can be obtained from \cite{Galdi2011} that $(u,p)$ is a solution of the following Stokes problem
\begin{align}\label{2.19}
\begin{cases}
-\Delta u+\nabla p=-(\rho+1)u_{t}-(\rho+1)u\cdot\nabla u+\diver G, & x\in\mathbb{R}^{2},\\
\diver u=0, & x\in\mathbb{R}^{2}.
\end{cases}
\end{align}
Applying $-\diver $ to (\ref{2.18}), we have
\begin{equation}\label{2.018}
-\Delta\tilde{p}=-\diver\Big(\frac{1}{\rho+1}\nabla p\Big).
\end{equation}
By solving the above Poisson equation for $\tilde{p}$, we can get
\begin{equation}\label{2.019}
\tilde{p}=\int_{\mathbb{R}^{2}}\Phi(x-y)\diver\Big(\frac{1}{\rho(y)+1}\nabla p(y)\Big)\,dy,
\end{equation}
where $\Phi(x), 0\neq x\in\mathbb{R}^{2}$, is the fundamental solution of the Laplacian equation (cf. \cite{Evans1998}). Next, using Lemma \ref{le5.5}, we have the following estimate
\begin{align*}
\|\nabla\tilde{p}\|_{L^{2}}&\lesssim \|\frac{1}{1+\rho}\|_{L^{\infty}}\|\nabla p\|_{L^{2}}  \\
&\lesssim\|\nabla p\|_{L^{2}}\lesssim\|f\|_{L^{2}},
\end{align*}
where $f=-(\rho+1)u_{t}-(\rho+1)u\cdot\nabla u+\diver G$.\\
Considering (\ref{2.18}), the system $(\ref{2.4})$ can be written as
\begin{equation}\label{2.20}
\begin{cases}
u_{t}-\Delta u+u\cdot\nabla u+(1-\frac{1}{\rho+1})\Delta u+\nabla\tilde{p}+(1-\frac{1}{\rho+1})\diver G=\diver G,\\
G_{t}+ u\cdot\nabla G+Q(\nabla u, G)=2D(u), \\
\diver u=0, \qquad\qquad (x,t)\in\mathbb{R}^{2}\times\mathbb{R}^{+}.
\end{cases}
\end{equation}

The estimate of the basic energy $\mathcal{E}_{0}(t)$ can be established in the following lemma.

\begin{Lemma}\label{le2.4}
Assume that $\mathcal{E}_{0}(t)$ is defined as in (\ref{2.5}). Then the following estimate is given
\begin{equation}\label{2.21}
\mathcal{E}_{0}(t)\lesssim\mathcal{E}_{01}(0)+\mathcal{E}_{total}(t)+\mathcal{E}_{total}^{\frac{3}{2}}(t)+\mathcal{E}_{total}^{\frac{9}{4}}(t),
\end{equation}
for any $t>0$.
\end{Lemma}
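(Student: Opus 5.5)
We plan to carry out energy estimates on the reformulated system (\ref{2.20}) at two regularity levels, $\dot H^{-s_1}$ and $\dot H^{2}$, and to add a cross term that captures the damping of $\mathbb{P}\diver G$. \emph{Step 1 (basic energy identities).} We apply $\Lambda^{\sigma}$ with $\sigma\in\{-s_1,2\}$ to the $u$-equation and to the $G$-equation. We take the $L^2$ inner product of the first with $\Lambda^\sigma u$ and of the second with $\frac12\Lambda^\sigma G$, and add. The linear coupling cancels, since $\langle \diver G,u\rangle+\langle D(u),G\rangle=0$ for symmetric $G$. The term $\nabla\tilde p$ is not orthogonal to $u$, because $\tilde p$ is not the Leray pressure. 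We therefore write $\nabla\tilde p=\nabla p+(\frac{1}{\rho+1}-1)\nabla p$. The first part vanishes against the divergence-free $u$. The second part is a nonlinear term $\rho\,\nabla p/(1+\rho)$, and we bound it through Remark 2.1 by $\|\rho\|\,(\|u_t\|+\|u\cdot\nabla u\|+\|\diver G\|)$, using the equation to replace $u_t$. This yields
\begin{equation*}
\frac12\frac{d}{dt}\big(\|u\|_{\dot H^\sigma}^2+\tfrac12\|G\|_{\dot H^\sigma}^2\big)+\|\nabla u\|_{\dot H^\sigma}^2=N_\sigma,
\end{equation*}
where $N_\sigma$ collects the contributions of $u\cdot\nabla u$, $(1-\frac{1}{\rho+1})(\Delta u+\diver G)$, the pressure remainder, $u\cdot\nabla G$ and $Q(\nabla u,G)$.

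\emph{Step 2 (dissipation of $\mathbb{P}\diver G$).} We apply $\mathbb{P}\diver$ to the $G$-equation; Lemma \ref{le2.2} handles the transport term. We then test the $u$-equation against $-\Lambda^{2\sigma'}\mathbb{P}\diver G$ for $\sigma'\in\{-s_1,1\}$. Since $\mathbb{P}\diver D(u)=\frac12\Delta u$, this gives
\begin{equation*}
-\frac{d}{dt}\langle \Lambda^{\sigma'}u,\Lambda^{\sigma'}\mathbb{P}\diver G\rangle+\|\mathbb{P}\diver G\|_{\dot H^{\sigma'}}^2\lesssim \|\nabla u\|_{\dot H^{\sigma'}}^2+\|\nabla u\|_{\dot H^{\sigma'+1}}\|\mathbb{P}\diver G\|_{\dot H^{\sigma'}}+\text{nonlinear}.
\end{equation*}
We multiply this by a small constant $\delta$ and add it to Step 1. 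The cross term is dominated by the energy, and the result is the dissipation appearing in $\mathcal{E}_0$. The $\mathcal{E}_{total}(t)$ term on the right-hand side of (\ref{2.21}) is meant to absorb the linear leftovers, together with the cross-term contribution evaluated at time $t$ (which is bounded by the sup part), while $\mathcal{E}_{01}(0)$ denotes the initial data.

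\emph{Step 3 (nonlinear terms).} We integrate in time. Terms at the $\dot H^2$ level are bounded by commutator estimates, $\|u\|_{L^\infty}$, $\|\nabla u\|_{L^\infty}$ and $\|\rho\|_{H^2}$, in the style of (\ref{2.11})--(\ref{2.12}). The time-weighted integrability of $\nabla u$ from $\mathcal{E}_1$ is combined with the bounds on $\rho$ and $\mathbb{F}-\mathbb{I}$ from $\mathcal{E}_a$. This gives contributions $\mathcal{E}_{total}^{3/2}$. The terms with products such as $\rho\,\nabla p$, where $\nabla p$ is itself nonlinear, produce the quartic-type contribution $\mathcal{E}_{total}^{9/4}$ after Young's inequality.

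\emph{Main obstacle.} The hardest part will be the negative-index level $\dot H^{-s_1}$. There, products such as $u\cdot\nabla G$, $Q(\nabla u,G)$, $(1-\frac1{\rho+1})\diver G$ and $\rho\nabla p$ must be put in $\dot H^{-s_1}$. We plan to use the 2D embedding $L^{p}\subset\dot H^{-s_1}$ with $\frac1p=\frac12+\frac{s_1}{2}$ and Hölder's inequality, so that $\|fg\|_{\dot H^{-s_1}}\lesssim\|f\|_{L^{2/s_1}}\|g\|_{L^2}$. The factor $G$ is not dissipated; only $\mathbb{P}\diver G$ is. We therefore must place all time integrability on $u$ and use the fractional weights of $\mathcal{E}_1$, exactly as in (\ref{2.11}). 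When $\diver G$ appears without the projection, we intend to rewrite it through the $u$-equation or to pair it with $\rho$ so that the $\dot H^{s_1-s_0}$ bound on $\rho$ from $\mathcal{E}_a$ (via (\ref{2.17})) becomes usable.
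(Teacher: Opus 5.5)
Your linear architecture is the paper's. The coupling cancels by the symmetry of $G$, as in \eqref{2.23}, and your cross term $-\langle u,\mathbb{P}\diver G\rangle_{\dot H^{\sigma'}}$ is the paper's time integration by parts of $I_8$ in \eqref{2.38}--\eqref{2.42}. The gap is in the density/pressure terms.

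\textbf{The pressure remainder is not higher order.} Your premise that $\nabla p$ is ``itself nonlinear'' is false. Solving $\diver\big(\frac{1}{1+\rho}\nabla p\big)=\diver\big(\frac{1}{1+\rho}(\Delta u+\diver G)-u\cdot\nabla u\big)$ gives $\nabla p=\nabla\Delta^{-1}\diver\diver G+{}$(quadratic terms), that is, $\nabla p=(\mathbb{I}-\mathbb{P})\diver G+\cdots$. This is why $\|\diver G\|$ enters your bound linearly. Also, quoting Remark 2.1 while replacing $u_t$ ``by the equation'' is circular, since that equation contains $\nabla p$.

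Hence $g(\rho)\nabla p$, exactly like $g(\rho)\diver G$, is quadratic, and neither factor is dissipated or decaying. At the top level this is harmless: the partner $\nabla^3u$ (resp.\ $\nabla\mathbb{P}\diver G$) is in $L^1_t$ because the weight $1+2s_1-s_0$ in $\mathcal{E}_1$ exceeds $1$. At the $\dot H^{-s_1}$ level it is not harmless:
\begin{itemize}
\item in Step 1 it is paired with $\Lambda^{-s_1}u$, which has no decay, so the available bound grows like $t$;
\item in Step 2 it is paired with $\Lambda^{-s_1}\mathbb{P}\diver G$, which is only in $L^2_t$, so the bound grows like $\sqrt t$.
\end{itemize}
No power of $\mathcal{E}_{total}$ controls these, so the claimed $\mathcal{E}_{total}^{3/2}$ and $\mathcal{E}_{total}^{9/4}$ contributions are unjustified.

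Your proposed rescues do not work. $\dot H^{-s_1}$ is the bottom of the energy scale. Moving derivatives onto the product so that $u$ lands in a time-integrable range ($\dot H^{1-s_1}$ or higher) requires bounding a product in $\dot H^{-r}$ with $r>1$, which fails in 2D at zero frequency. The $\dot H^{s_1-s_0}$ bound on $\rho$ is only uniform in time and gives no integrability.

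\textbf{The missing idea.} Never separate these terms. By the undivided momentum equation,
$$g(\rho)(\Delta u+\diver G-\nabla p)=\rho(u_t+u\cdot\nabla u),$$
so the non-dissipated part $(\mathbb{I}-\mathbb{P})\diver G$ is cancelled exactly by the pressure. Your remark about rewriting $\diver G$ through the $u$-equation points this way, but it only helps when applied to the whole combination.

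The paper does this in Step 2. It multiplies by $1+\rho$ and applies $\mathbb{P}$, see \eqref{2.31}, so $\nabla p$ disappears and only $\mathbb{P}(\rho u_t)$ and $\mathbb{P}(\rho u\cdot\nabla u)$ remain. Then $\rho u_t$ is integrated by parts in time, trading $u_t$ for $\rho_t=-u\cdot\nabla\rho$ and for $\diver G_t$ from \eqref{2.43}; see \eqref{2.44}--\eqref{2.47}.

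You need the same device in Step 1 at the $\dot H^{-s_1}$ level. There the time integration by parts is indispensable: $\|u_t\|_{L^2}$ is only square-integrable in time, while $\|u\|_{\dot H^{-s_1}}$ does not decay. On this point you cannot simply follow the paper. It replaces $\frac{1}{1+\rho}\nabla p$ by a gradient (your objection to that is correct). Its bound \eqref{2.28}--\eqref{2.29} for the $\dot H^{-s_1}$ part of $g(\rho)\diver G$ in $I_4$ leaves a product of three factors, each merely bounded in time.

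\textbf{A secondary gap.} At the $\dot H^{-s_1}$ level, Hardy--Littlewood--Sobolev plus H\"older is too lossy for $u\cdot\nabla G$ and $Q(\nabla u,G)$. It discards the transport cancellation and puts $u$ or $\nabla u$ into $L^{2/s_1}$, which at the heat rate decays only like $t^{-1/2}$, resp.\ $t^{-1}$; neither is integrable. The paper's Lemmas \ref{le5.3} and \ref{le5.4} cost only $\|\nabla^2u\|_{\dot H^{-r_1}\cap\dot H^{r_1}}$ with $r_1=s_0-s_1$, which is in $L^1_t$.

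Finally, \eqref{2.21} as literally written follows at once from $\mathcal{E}_0\le\mathcal{E}_{total}$. The real content of the lemma is that the nonlinear terms are superlinear in $\mathcal{E}_{total}$, and that is exactly where your plan breaks.
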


\begin{proof}
We divide the estimate of the basic energy $\mathcal{E}_{0}(t)$ into two steps. To achieve this, we first divide the basic energy $\mathcal{E}_{0}(t)$ into the following two parts
\begin{align*}
&\mathcal{E}_{01}(t)=\sup_{0\leq t'\leq t}(\|u(t')\|_{\dot{H}^{-s_{1}}\cap\dot{H}^{2}}^{2}+\|G(t')\|_{\dot{H}^{-s_{1}}\cap\dot{H}^{2}}^{2})+\int_{0}^{t}\|\nabla u(t')\|_{\dot{H}^{-s_{1}}\cap\dot{H}^{2}}^{2}\,dt', \\
&\mathcal{E}_{02}(t)=\int_{0}^{t}\|\mathbb{P}\diver G(t')\|_{\dot{H}^{-s_{1}}\cap\dot{H}^{1}}^{2}\,dt'.
\end{align*}
\textbf{Step 1.} The estimate of $\mathcal{E}_{01}(t)$.\\
Applying the operator $\nabla^{k}(k=-s_{1},2)$ to (\ref{2.20}). Then taking inner product of $(2\nabla^{k}u,\nabla^{k}G)$ for the corresponding equations in $(\ref{2.20})$. We get
\begin{align}\label{2.22}
\frac{1}{2}\frac{d}{dt}\left(2\|u\|_{\dot{H}^{-s_{1}}\cap\dot{H}^{2}}^{2}+\|G\|_{\dot{H}^{-s_{1}}\cap\dot{H}^{2}}^{2}\right)+\|\nabla u\|_{\dot{H}^{-s_{1}}\cap\dot{H}^{2}}^{2}=I_{1}+I_{2}+I_{3}+I_{4},
\end{align}
where
\begin{align*}
&I_{1}=\left\langle\diver  G,2u\right\rangle_{\dot{H}^{-s_{1}}\cap\dot{H}^{2}}+\left\langle2D(u),G\right\rangle_{\dot{H}^{-s_{1}}\cap\dot{H}^{2}};\\
&I_{2}=-\left\langle u\cdot\nabla u,2u\right\rangle_{\dot{H}^{-s_{1}}\cap\dot{H}^{2}}-\left\langle u\cdot\nabla G,G\right\rangle_{\dot{H}^{-s_{1}}\cap\dot{H}^{2}};\\
&I_{3}=-\left\langle Q(\nabla u,G),G\right\rangle_{\dot{H}^{-s_{1}}\cap\dot{H}^{2}};\\
&I_{4}=-2\left\langle\frac{\rho}{\rho+1}(\Delta u+\diver G),u\right\rangle_{\dot{H}^{-s_{1}}\cap\dot{H}^{2}}.
\end{align*}
For $I_{1}$, it follows from integration by parts and the symmetry $G_{ij}=G_{ji}$ that
\begin{align}\label{2.23}
I_{1}&=\sum_{k=-s_{1},2}\int_{\mathbb{R}^{2}}2\bigg(\nabla^{k}\diver  G\nabla^{k}u+\sum_{i,j=1}^{2}\nabla^{k}\frac{\partial_{j}u_{i}+\partial_{i}u_{j}}{2}\nabla^{k}G_{ij}\bigg)\,dx\nonumber \\
&=\sum_{k=-s_{1},2}2\int_{\mathbb{R}^{2}}\bigg(\nabla^{k}\diver  G\nabla^{k}u-\sum_{i,j=1}^{2}\frac{\nabla^{k}u_{i}\nabla^{k}\partial_{j}G_{ij}+\nabla^{k}u_{j}\nabla^{k}\partial_{i}G_{ij}}{2}\bigg)\,dx\nonumber \\
&=\sum_{k=-s_{1},2}2\int_{\mathbb{R}^{2}}\bigg(\nabla^{k}\diver  G\nabla^{k}u-\sum_{i,j=1}^{2}\nabla^{k}u_{i}\nabla^{k}\partial_{j}G_{ij}\bigg)\,dx\nonumber \\
&=0.
\end{align}
For $I_{2}$, using the divergence-free condition, H\"{o}lder's inequality, Lemma \ref{le5.3} and Sobolev imbedding theorem, we have
\begin{align}\label{2.24}
I_{2}&=-2\left\langle[|\nabla|^{-s_{1}},u\cdot\nabla] u,|\nabla|^{-s_{1}}u\right\rangle_{L^{2}}-\left\langle[|\nabla|^{-s_{1}},u\cdot\nabla] G,|\nabla|^{-s_{1}}G\right\rangle_{L^{2}}\nonumber \\
&\ \ \ \ -2\left\langle\nabla^{2}(u\cdot\nabla u),\nabla^{2}u\right\rangle_{L^{2}}-\left\langle\nabla^{2}(u\cdot\nabla G),\nabla^{2}G\right\rangle_{L^{2}}\nonumber \\
&\lesssim\|[|\nabla|^{-s_{1}},u\cdot\nabla] u\|_{L^{2}}\|u\|_{\dot{H}^{-s_{1}}}+\|[|\nabla|^{-s_{1}},u\cdot\nabla] G\|_{L^{2}}\|G\|_{\dot{H}^{-s_{1}}}+\|\nabla u\|_{L^{\infty}}\|\nabla^{2} u\|_{L^{2}}^{2} \nonumber \\
&\ \ \ \ +\|\nabla^{2} u\|_{L^{3}}\|\nabla G\|_{L^{6}}\|\nabla^{2} G\|_{L^{2}}+\|\nabla u\|_{L^{\infty}}\|\nabla^{2} G\|_{L^{2}}^{2}\nonumber \\
&\lesssim\|\nabla^{2} u\|_{\dot{H}^{-s_{0}+s_{1}}\cap\dot{H}^{s_{0}-s_{1}}}\|(u,G)\|_{\dot{H}^{-s_{1}}}^{2}+\|\nabla^{2} u\|_{\dot{H}^{-s_{0}+s_{1}}\cap\dot{H}^{1}}\|(u,G)\|_{\dot{H}^{2}}^{2}\nonumber \\
&\ \ \ \ +\|\nabla u\|_{\dot{H}^{-s_{0}+s_{1}}\cap\dot{H}^{2}}\|G\|_{\dot{H}^{1-s_{1}}\cap\dot{H}^{2}}\|\nabla^{2} G\|_{L^{2}}.
\end{align}
Integrating (\ref{2.24}) with respect to time yields
\begin{align}\label{2.25}
\int_{0}^{t}|I_{2}(t')|\,dt'&\lesssim\mathcal{E}_{0}(t)\mathcal{E}_{1}^{\frac{1}{2}}(t)+\mathcal{E}_{0}^{\frac{1}{2}}(t)\int_{0}^{t}
\bigg(\|u\|_{\dot{H}^{1+s_{1}-s_{0}}}\|\nabla G\|_{\dot{H}^{-s_{1}}}+\|u\|_{\dot{H}^{1+s_{1}-s_{0}}}\|\nabla^{2} G\|_{L^{2}}\bigg)\nonumber \\
&\ \ \ \ +\|\nabla u\|_{\dot{H}^{2}}\|\nabla G\|_{\dot{H}^{-s_{1}}}+\|\nabla u\|_{\dot{H}^{2}}\|\nabla^{2} G\|_{L^{2}}\bigg)\,dt'\nonumber \\
&\lesssim\mathcal{E}_{0}(t)\mathcal{E}_{1}^{\frac{1}{2}}(t)+\mathcal{E}_{0}^{\frac{3}{2}}(t)\lesssim\mathcal{E}_{0}^{\frac{3}{2}}(t)
+\mathcal{E}_{1}^{\frac{3}{2}}(t).
\end{align}
For $I_{3}$, by Lemma \ref{le5.1}, Lemma \ref{le5.4}, and H\"{o}lder's inequality, it holds that
\begin{align}\label{2.26}
|I_{3}|&\lesssim\|Q(\nabla u,G)\|_{\dot{H}^{-s_{1}}}\|G\|_{\dot{H}^{-s_{1}}}+\|Q(\nabla u,G)\|_{\dot{H}^{2}}\|G\|_{\dot{H}^{2}}\nonumber \\
&\lesssim\|\nabla^{2} u\|_{\dot{H}^{-s_{0}+s_{1}}\cap\dot{H}^{s_{0}-s_{1}}}\|G\|_{\dot{H}^{-s_{1}}}^{2}+(\|G\|_{L^{\infty}}\|\nabla^{3} u\|_{L^{2}}+\|\nabla^{2} u\|_{L^{3}}\|\nabla G\|_{L^{6}}+\|\nabla u\|_{L^{\infty}}\|\nabla^{2} G\|_{L^{2}})\|G\|_{\dot{H}^{2}}\nonumber \\
&\lesssim\|\nabla^{2} u\|_{\dot{H}^{-s_{0}+s_{1}}\cap\dot{H}^{s_{0}-s_{1}}}\|G\|_{\dot{H}^{-s_{1}}}^{2}+(\|G\|_{\dot{H}^{1-s_{1}}\cap\dot{H}^{2}}\|\nabla u\|_{\dot{H}^{2}}+\|\nabla u\|_{\dot{H}^{-s_{0}+s_{1}}\cap\dot{H}^{2}}\|G\|_{\dot{H}^{1-s_{1}}\cap\dot{H}^{2}})\|G\|_{\dot{H}^{2}}\nonumber \\
&\ \ \ \ +\|\nabla u\|_{\dot{H}^{1+s_{1}-s_{0}}\cap\dot{H}^{2}}\|\nabla^{2} G\|_{L^{2}}\|G\|_{\dot{H}^{2}}.
\end{align}
Integrating (\ref{2.26}) from $0$ to $t$, we have
\begin{equation}\label{2.27}
\int_{0}^{t}|I_{3}(t')|\,dt'\lesssim\mathcal{E}_{0}(t)\mathcal{E}_{1}^{\frac{1}{2}}(t)+\mathcal{E}_{0}^{\frac{3}{2}}(t)
\lesssim\mathcal{E}_{0}^{\frac{3}{2}}(t)+\mathcal{E}_{1}^{\frac{3}{2}}(t).
\end{equation}
Let
\begin{align*}
g(\rho):=\frac{\rho}{\rho+1}.
\end{align*}
For $I_{4}$, applying H\"{o}lder's inequality, Lemmas \ref{le5.1}--\ref{le5.2} and Lemma \ref{le5.4}, we have
\begin{align}\label{2.28}
|I_{4}|&\lesssim\|g(\rho)(\diver  G+\Delta u)\|_{\dot{H}^{-s_{1}}}\|u\|_{\dot{H}^{-s_{1}}}+\|g(\rho)(\diver  G+\Delta u)\|_{\dot{H}^{1}} \|u\|_{\dot{H}^{3}}\nonumber \\
&\lesssim\Big(\|g(\rho)\diver  G\|_{\dot{H}^{-s_{1}}}+\|g(\rho)\Delta u\|_{\dot{H}^{-s_{1}}}\Big)\|u\|_{\dot{H}^{-s_{1}}}+\Big(\|g(\rho)\|_{L^{\infty}}\|\diver  G+\Delta u\|_{\dot{H}^{1}}\nonumber \\
&\ \ \ \ +\|\nabla g(\rho)\|_{L^{3}}\|\diver  G+\Delta u\|_{L^{6}}\Big)\|u\|_{\dot{H}^{3}}\nonumber \\
&\lesssim\Big(\|\diver \big(g(\rho)G\big)-\nabla(g(\rho))G\|_{\dot{H}^{-s_{1}}}+\|\nabla\rho\|_{\dot{H}^{-s_{0}+s_{1}}\cap\dot{H}^{s_{0}-s_{1}}}\|\Delta u\|_{\dot{H}^{-s_{1}}}\Big)\|u\|_{\dot{H}^{-s_{1}}}+\Big(\|g(\rho)\|_{L^{\infty}}\|\diver  G+\Delta u\|_{\dot{H}^{1}}\nonumber \\
&\ \ \ \ +\|\nabla g(\rho)\|_{L^{3}}\|\diver  G+\Delta u\|_{L^{6}}\Big)\|u\|_{\dot{H}^{3}}\nonumber \\
&\lesssim\Big(\|\diver \big(g(\rho)G\big)\|_{\dot{H}^{-s_{1}}}+\|\nabla\big(g(\rho)\big)G\|_{\dot{H}^{-s_{1}}}+\|\rho\|_{H^{2}}\|\nabla u\|_{\dot{H}^{1+s_{1}-s_{0}}\cap\dot{H}^{2}}\Big)\|u\|_{\dot{H}^{-s_{1}}}+\Big(\|\nabla^{2}G\|_{L^{2}}+\|\nabla u\|_{\dot{H}^{2}}\nonumber \\
&\ \ \ \ +\|G\|_{\dot{H}^{1-s_{1}}\cap\dot{H}^{2}}+\|\nabla u\|_{\dot{H}^{-s_{1}}\cap\dot{H}^{2}}\Big)\|\rho\|_{H^{2}}\|u\|_{\dot{H}^{3}}\nonumber \\
&\lesssim\Big(\|\rho\|_{L^{3}}\||\nabla|^{1-s_{1}} G\|_{L^{6}}+\||\nabla|^{1-s_{1}} \rho\|_{L^{2}}\|G\|_{L^{\infty}}+\|\nabla G\|_{\dot{H}^{s_{0}-(1+s_{1})}\cap\dot{H}^{1+s_{1}-s_{0}}}\|\rho\|_{\dot{H}^{1-s_{1}}}\nonumber \\
&\ \ \ \ +\|\rho\|_{H^{2}}\|\nabla u\|_{\dot{H}^{1+s_{1}-s_{0}}\cap\dot{H}^{2}}\Big)\|u\|_{\dot{H}^{-s_{1}}}+\Big(\|\nabla^{2}G\|_{L^{2}}+\|\nabla u\|_{\dot{H}^{2}}+\|G\|_{\dot{H}^{1-s_{1}}\cap\dot{H}^{2}}+\|\nabla u\|_{\dot{H}^{-s_{1}}\cap\dot{H}^{2}}\Big)\|\rho\|_{H^{2}}\|u\|_{\dot{H}^{3}}.
\end{align}
Integrating (\ref{2.28}) from $0$ to $t$, we have
\begin{align}\label{2.29}
\int_{0}^{t}|I_{4}(t')|\,dt'&\lesssim\mathcal{E}_{a}^{\frac{1}{2}}(t)\mathcal{E}_{0}^{\frac{1}{2}}(t)\mathcal{E}_{1}^{\frac{1}{2}}(t)
+\mathcal{E}_{a}^{\frac{1}{2}}(t)\mathcal{E}_{0}(t)\nonumber \\
&\lesssim\mathcal{E}_{a}^{\frac{3}{2}}(t)+\mathcal{E}_{0}^{\frac{3}{2}}(t)+\mathcal{E}_{1}^{\frac{3}{2}}(t).
\end{align}
Finally, putting (\ref{2.23}),(\ref{2.25}), (\ref{2.27}) and (\ref{2.29}) into (\ref{2.22}),  we get
\begin{align}\label{2.30}
\mathcal{E}_{01}(t)&\lesssim\|(u_{0},G_{0})\|_{\dot{H}^{-s_{1}}\cap\dot{H}^{2}}^{2}+\mathcal{E}_{a}^{\frac{3}{2}}(t)+\mathcal{E}_{0}^{\frac{3}{2}}(t)+\mathcal{E}_{1}^{\frac{3}{2}}(t)\nonumber \\
&\lesssim\mathcal{E}_{0}(0)+\mathcal{E}_{total}^{\frac{3}{2}}(t).
\end{align}

\textbf{Step 2.} The estimate of $\mathcal{E}_{02}(t)$.\\
Multiplying the first equation of system (\ref{2.4}) by $\rho+1$,  and using operator $\mathbb{P}$ on the resulting equations, we have
\begin{equation}\label{2.31}
\mathbb{P}(\rho u_{t})+u_{t}-\Delta u+\mathbb{P}(\rho u\cdot\nabla u)+\mathbb{P}(u\cdot\nabla u)=\mathbb{P}(\diver  G).
\end{equation}
Applying $\nabla^{k}(k=-s_{1},1)$ to (\ref{2.31}), and taking inner product with $\nabla^{k}\mathbb{P}(\diver  G)$, one gets
\begin{equation}\label{2.32}
\|\mathbb{P}(\diver  G)\|_{\dot{H}^{-s_{1}}\cap\dot{H}^{1}}^{2}=I_{5}+I_{6}+I_{7}+I_{8}+I_{9},
\end{equation}
where
\begin{align*}
&I_{5}=-\left\langle\Delta u,\mathbb{P}(\diver  G)\right\rangle_{\dot{H}^{-s_{1}}\cap\dot{H}^{1}};\\
&I_{6}=\left\langle\mathbb{P}(u\cdot\nabla u),\mathbb{P}(\diver  G)\right\rangle_{\dot{H}^{-s_{1}}\cap\dot{H}^{1}};\\
&I_{7}=\left\langle\mathbb{P}(\rho u\cdot\nabla u),\mathbb{P}(\diver  G)\right\rangle_{\dot{H}^{-s_{1}}\cap\dot{H}^{1}};\\
&I_{8}=\left\langle u_{t},\mathbb{P}(\diver  G)\right\rangle_{\dot{H}^{-s_{1}}\cap\dot{H}^{1}};\\
&I_{9}=\left\langle\mathbb{P}(\rho u_{t}),\mathbb{P}(\diver  G)\right\rangle_{\dot{H}^{-s_{1}}\cap\dot{H}^{1}}.
\end{align*}
For $I_{5}$, it is easy to see that
\begin{align}\label{2.33}
\int_{0}^{t}|I_{5}(t')|\,dt'&\lesssim\int_{0}^{t}\|\Delta u\|_{\dot{H}^{-s_{1}}\cap\dot{H}^{1}}\|\mathbb{P}(\diver  G)\|_{\dot{H}^{-s_{1}}\cap\dot{H}^{1}}\,dt'\nonumber \\
&\lesssim\mathcal{E}_{01}^{\frac{1}{2}}(t)\mathcal{E}_{02}^{\frac{1}{2}}(t).
\end{align}
For $I_{6}$, due to the boundedness of Riesz operator $\mathcal{R}_{i}=(-\Delta)^{-\frac{1}{2}}\nabla_{i}$ from $L^{2}$ into itself, it holds for any vector $u$, $\|\mathbb{P}u\|_{L^{2}}\lesssim\|u\|_{L^{2}}$. Using Lemma \ref{le5.2} and Lemma \ref{le5.4}, we may conclude that
\begin{align}\label{2.34}
I_{6}&\lesssim\|u\cdot\nabla u\|_{\dot{H}^{-s_{1}}}\|\mathbb{P}(\diver  G)\|_{\dot{H}^{-s_{1}}}+\|u\cdot\nabla u\|_{\dot{H}^{1}}\|\mathbb{P}(\diver  G)\|_{\dot{H}^{1}}\nonumber \\
&\lesssim\|\nabla^{2} u\|_{\dot{H}^{-s_{0}+s_{1}}\cap\dot{H}^{s_{0}-s_{1}}}\|u\|_{\dot{H}^{-s_{1}}}\|\mathbb{P}(\diver  G)\|_{\dot{H}^{-s_{1}}}+\Big(\|u\|_{L^{\infty}}\|\nabla^{2}u\|_{L^{2}}\nonumber \\
&\ \  \  \ +\|\nabla u\|_{L^{6}}\|\nabla u\|_{L^{3}}\Big)\|\mathbb{P}(\diver  G)\|_{\dot{H}^{1}}\nonumber \\
&\lesssim\|\nabla u\|_{\dot{H}^{-s_{1}}\cap\dot{H}^{2}}\|u\|_{\dot{H}^{-s_{1}}}\|\mathbb{P}(\diver  G)\|_{\dot{H}^{-s_{1}}}+\Big(\|u\|_{\dot{H}^{1-s_{1}}\cap\dot{H}^{2}}\|\nabla^{2}u\|_{L^{2}}\nonumber \\
&\ \  \  \ +\|u\|_{\dot{H}^{1+s_{1}-s_{0}}\cap\dot{H}^{2}}^{2}\Big)\|\mathbb{P}(\diver  G)\|_{\dot{H}^{1}}.
\end{align}
Integrating (\ref{2.34}) from $0$ to $t$, we arrive at
\begin{align}\label{2.35}
\int_{0}^{t}|I_{6}(t')|\,dt'&\lesssim\sup_{0\leq t'\leq t}\|u\|_{\dot{H}^{-s_{1}}}\int_{0}^{t}\|\nabla u\|_{\dot{H}^{-s_{1}}\cap\dot{H}^{2}}\|\mathbb{P}(\diver  G)\|_{\dot{H}^{-s_{1}}}\,dt'\nonumber\\
&\ \ \ \ +\sup_{0\leq t'\leq t}\|u\|_{\dot{H}^{2}}\int_{0}^{t}\|\nabla u\|_{\dot{H}^{-s_{1}}}\|\mathbb{P}(\diver  G)\|_{\dot{H}^{1}}\,dt'\nonumber \\
&\ \  \  \ +\sup_{0\leq t'\leq t}(1+t')^{1+2s_{1}-s_{0}}\|u\|_{\dot{H}^{1+s_{1}-s_{0}}\cap\dot{H}^{2}}^{2}\int_{0}^{t}(1+t')^{-(1+2s_{1}-s_{0})}\|\mathbb{P}(\diver  G)\|_{\dot{H}^{1}}\,dt'\nonumber \\
&\lesssim\mathcal{E}_{0}^{\frac{3}{2}}(t)+\mathcal{E}_{1}(t)\mathcal{E}_{0}^{\frac{1}{2}}(t)\lesssim\mathcal{E}_{0}^{\frac{3}{2}}(t)
+\mathcal{E}_{1}^{\frac{3}{2}}(t).
\end{align}
For $I_{7}$, using H\"{o}lder's inequality, Lemmas \ref{le5.1}--\ref{le5.2} and Lemma \ref{le5.4}, one gets
\begin{align}\label{2.36}
I_{7}&\lesssim\|\rho u\cdot\nabla u\|_{\dot{H}^{-s_{1}}}\|\mathbb{P}(\diver  G)\|_{\dot{H}^{-s_{1}}}+\|\rho u\cdot\nabla u\|_{\dot{H}^{1}}\|\mathbb{P}(\diver  G)\|_{\dot{H}^{1}}\nonumber \\
&\lesssim\|\nabla(\rho u)\|_{\dot{H}^{-s_{1}}\cap\dot{H}^{s_{1}}}\|\nabla u\|_{\dot{H}^{-s_{1}}}\|\mathbb{P}(\diver  G)\|_{\dot{H}^{-s_{1}}}+\Big(\|\rho\|_{L^{\infty}}\|u\cdot\nabla u\|_{\dot{H}^{1}}\nonumber \\
&\ \  \  \ +\|\nabla \rho\|_{L^{3}}\|u\cdot\nabla u\|_{L^{6}}\Big)\|\mathbb{P}(\diver  G)\|_{\dot{H}^{1}}\nonumber \\
&\lesssim\Big[(\||\nabla|^{1-s_{1}} \rho\|_{L^{2}}+\||\nabla|^{1+s_{1}} \rho\|_{L^{2}})\|u\|_{L^{\infty}}+\|\rho\|_{L^{3}}\||\nabla|^{1-s_{1}} u\|_{L^{6}}+\|\rho\|_{L^{\infty}}\||\nabla|^{1+s_{1}} u\|_{L^{2}}\Big]\|\nabla u\|_{\dot{H}^{-s_{1}}}\|\mathbb{P}(\diver  G)\|_{\dot{H}^{-s_{1}}}\nonumber \\
&\ \  \  \ +\Big(\|u\cdot\nabla u\|_{\dot{H}^{1}}+\|u\cdot\nabla u\|_{L^{6}}\Big)\|\rho\|_{H^{2}}\|\mathbb{P}(\diver  G)\|_{\dot{H}^{1}}\nonumber \\
&\lesssim\|u\|_{\dot{H}^{1+s_{1}-s_{0}}\cap\dot{H}^{2}}\|\rho\|_{H^{2}}\|\nabla u\|_{\dot{H}^{-s_{1}}}\|\mathbb{P}(\diver  G)\|_{\dot{H}^{-s_{1}}}+\|u\|_{\dot{H}^{1+s_{1}-s_{0}}\cap\dot{H}^{2}}^{2}\|\rho\|_{H^{2}}\|\mathbb{P}(\diver  G)\|_{\dot{H}^{1}}.
\end{align}
Integrating (\ref{2.36}) from $0$ to $t$, using H\"{o}lder's inequality, we get
\begin{equation}\label{2.37}
\int_{0}^{t}|I_{7}(t')|\,dt'\lesssim\mathcal{E}_{0}(t)\mathcal{E}_{a}^{\frac{1}{2}}(t)\mathcal{E}_{1}^{\frac{1}{2}}(t)+\mathcal{E}_{a}^{\frac{1}{2}}(t)\mathcal{E}_{1}^{\frac{3}{2}}(t).
\end{equation}
For $I_{8}$, applying integration by parts and $\mathbb{P}u=u$, $I_{8}$ can be rewritten as follows
\begin{align}\label{2.38}
I_{8}&=-\frac{d}{dt}\left\langle\nabla u,G\right\rangle_{\dot{H}^{-s_{1}}\cap\dot{H}^{1}}+\left\langle\nabla u,G_{t}\right\rangle_{\dot{H}^{-s_{1}}\cap\dot{H}^{1}}\nonumber \\
&=-\frac{d}{dt}\left\langle\nabla u,G\right\rangle_{\dot{H}^{-s_{1}}\cap\dot{H}^{1}}+\left\langle\nabla u,2D(u)-u\cdot\nabla G-Q(\nabla u, G)\right\rangle_{\dot{H}^{-s_{1}}\cap\dot{H}^{1}}\nonumber \\
&:=I_{81}+I_{82},
\end{align}
where
\begin{align*}
&I_{81}=-\frac{d}{dt}\left\langle\nabla u,G\right\rangle_{\dot{H}^{-s_{1}}\cap\dot{H}^{1}};\\
&I_{82}=\left\langle\nabla u,2D(u)-u\cdot\nabla G-Q(\nabla u, G)\right\rangle_{\dot{H}^{-s_{1}}\cap\dot{H}^{1}}.
\end{align*}
Using integration by parts, H\"{o}lder's inequality, and Lemma \ref{le5.1}, we get
\begin{align}\label{2.39}
\int_{0}^{t}I_{81}(t')\,dt'&=-\left\langle\nabla u,G\right\rangle_{\dot{H}^{-s_{1}}\cap\dot{H}^{1}}\Big|_{0}^{t}\nonumber \\
&\lesssim\|\nabla u(t)\|_{\dot{H}^{-s_{1}}\cap\dot{H}^{1}}\|G(t)\|_{\dot{H}^{-s_{1}}\cap\dot{H}^{1}}+\|\nabla u_{0}\|_{\dot{H}^{-s_{1}}\cap\dot{H}^{1}}\|G_{0}\|_{\dot{H}^{-s_{1}}\cap\dot{H}^{1}}\nonumber \\
&\lesssim\mathcal{E}_{01}(t).
\end{align}
For $I_{82}$, applying divergence free condition $\diver u=0$ and Lemma \ref{le5.4}, we derive
\begin{align}\label{2.40}
I_{82}&\lesssim\|\nabla u\|_{\dot{H}^{-s_{1}}\cap\dot{H}^{1}}^{2}+\|\nabla^{2} u\|_{\dot{H}^{-s_{1}}}\|u\otimes G\|_{\dot{H}^{-s_{1}}}+\|\nabla^{3} u\|_{L^{2}}\|u\cdot\nabla G\|_{L^{2}}\nonumber \\
&\ \ \ \ + \|\nabla u\|_{\dot{H}^{-s_{1}}\cap\dot{H}^{1}}\|Q(\nabla u, G)\|_{\dot{H}^{-s_{1}}\cap\dot{H}^{1}}\nonumber \\
&\lesssim\|\nabla u\|_{\dot{H}^{-s_{1}}\cap\dot{H}^{1}}^{2}+\|\nabla u\|_{\dot{H}^{-s_{1}}\cap\dot{H}^{2}}\|\nabla u\|_{\dot{H}^{-s_{1}}\cap\dot{H}^{s_{1}}}\|G\|_{\dot{H}^{-s_{1}}}+\|\nabla u\|_{\dot{H}^{2}}\|u\|_{L^{\infty}}\|\nabla G\|_{L^{2}}\nonumber \\
&\ \ \ \ +\|\nabla u\|_{\dot{H}^{-s_{1}}\cap\dot{H}^{1}}\Big(\|\nabla^{2} u\|_{\dot{H}^{-s_{1}}\cap\dot{H}^{s_{1}}}\|G\|_{\dot{H}^{-s_{1}}}+\|\nabla G\|_{L^{3}}\|\nabla u\|_{L^{6}}+\|G\|_{L^{\infty}}\|\nabla^{2} u\|_{L^{2}}\Big)\nonumber \\
&\lesssim\|\nabla u\|_{\dot{H}^{-s_{1}}\cap\dot{H}^{1}}^{2}+\|\nabla u\|_{\dot{H}^{-s_{1}}\cap\dot{H}^{2}}^{2}\|G\|_{\dot{H}^{-s_{1}}}+\|\nabla u\|_{\dot{H}^{2}}\|u\|_{\dot{H}^{1+s_{1}-s_{0}}\cap\dot{H}^{2}}\|\mathbb{P}(\diver  G)\|_{\dot{H}^{s_{1}-s_{0}}\cap\dot{H}^{1}}\nonumber \\
&\ \ \ \ +\|\nabla u\|_{\dot{H}^{-s_{1}}\cap\dot{H}^{1}}\|\nabla u\|_{\dot{H}^{-s_{1}}\cap\dot{H}^{2}}\|G\|_{\dot{H}^{-s_{1}}}+\|\nabla u\|_{\dot{H}^{-s_{1}}\cap\dot{H}^{1}}\| u\|_{\dot{H}^{1+s_{1}-s_{0}}\cap\dot{H}^{2}}\|\mathbb{P}(\diver  G)\|_{\dot{H}^{s_{1}-s_{0}}\cap\dot{H}^{1}}.
\end{align}
Integrating (\ref{2.40}) from $0$ to $t$, we arrive at
\begin{align}\label{2.41}
\int_{0}^{t}I_{82}(t')\,dt'&\lesssim\mathcal{E}_{01}(t)+\mathcal{E}_{01}^{\frac{3}{2}}(t)+\mathcal{E}_{01}(t)\mathcal{E}_{1}^{\frac{1}{2}}(t)
+\mathcal{E}_{1}^{\frac{3}{2}}(t)\nonumber \\
&\lesssim\mathcal{E}_{total}(t)+\mathcal{E}_{total}^{\frac{3}{2}}(t).
\end{align}
Combining with (\ref{2.39}) and (\ref{2.41}), we have
\begin{align}\label{2.42}
\int_{0}^{t}|I_{8}(t')|\,dt'\lesssim\mathcal{E}_{total}(t)+\mathcal{E}_{total}^{\frac{3}{2}}(t).
\end{align}
To establish the estimate of $I_{9}$, using $\diver $ to the second equation of system (\ref{2.4}), one gets
\begin{equation}\label{2.43}
\diver  G_{t}+\diver  (u\cdot\nabla G)+\diver  Q(\nabla u, G)=\Delta u.
\end{equation}
Next, using integration by parts and exploiting the first equation of system (\ref{1.1}) and (\ref{2.43}), we derive
\begin{align}\label{2.44}
I_{9}&=\sum_{k=-s_{1},1}\frac{d}{dt}\int_{\mathbb{R}^{2}}\nabla^{k}\big(\mathbb{P}(\rho u)\big)\nabla^{k}\mathbb{P}(\diver  G)  \,dx-\sum_{k=-s_{1},1}\int_{\mathbb{R}^{2}}\nabla^{k}\big(\mathbb{P}(\rho_{t} u)\big)\nabla^{k}\mathbb{P}(\diver  G)  \,dx \nonumber \\
&\ \ \ \ -\sum_{k=-s_{1},1}\int_{\mathbb{R}^{2}}\nabla^{k}\big(\mathbb{P}(\rho u)\big)\nabla^{k}\mathbb{P}(\diver  G_{t})  \,dx\nonumber \\
&=\sum_{k=-s_{1},1}\frac{d}{dt}\int_{\mathbb{R}^{2}}\nabla^{k}\big(\mathbb{P}(\rho u)\big)\nabla^{k}\mathbb{P}(\diver  G)  \,dx+I_{91}+I_{92},
\end{align}
where
\begin{align*}
&I_{91}=\sum_{k=-s_{1},1}\int_{\mathbb{R}^{2}}\nabla^{k}\big[\mathbb{P}\big((u\cdot\nabla\rho) u\big)\big]\nabla^{k}\mathbb{P}(\diver  G)  \,dx;\\
&I_{92}=\sum_{k=-s_{1},1}\int_{\mathbb{R}^{2}}\nabla^{k}\big(\mathbb{P}(\rho u)\big)\nabla^{k}\mathbb{P}\big[\Delta u-\diver  (u\cdot\nabla G)-\diver  Q(\nabla u, G)\big]\,dx.
\end{align*}
Next, using H\"{o}lder's inequality, Lemmas \ref{le5.1}--\ref{le5.2} and Lemma \ref{le5.4}, we estimate $I_{91}$, $I_{92}$ as follows
\begin{align}\label{2.45}
I_{91}&\lesssim\|(u\cdot\nabla\rho) u\|_{\dot{H}^{-s_{1}}}\|\mathbb{P}(\diver  G)\|_{\dot{H}^{-s_{1}}}+\|(u\cdot\nabla\rho) u\|_{\dot{H}^{1}}\|\mathbb{P}(\diver  G)\|_{\dot{H}^{1}}\nonumber \\
&\lesssim\|\diver(\rho u\otimes u)-\rho u\cdot\nabla u\|_{\dot{H}^{-s_{1}}}\|\mathbb{P}(\diver  G)\|_{\dot{H}^{-s_{1}}}+\Big(\|u\|_{L^{\infty}}^{2}\|\nabla^{2}\rho\|_{L^{2}}\nonumber \\
&\ \ \ \ +\|u\|_{L^{\infty}}\|\nabla(u\cdot\nabla\rho)\|_{L^{2}}+\|\nabla u\|_{L^{6}}\|u\|_{L^{\infty}}\|\nabla\rho\|_{L^{3}}\Big)\|\mathbb{P}(\diver  G)\|_{\dot{H}^{1}}\nonumber \\
&\lesssim\Big(\||\nabla|^{1-s_{1}}(\rho u\otimes u)\|_{L^{2}}+\|\rho u\cdot\nabla u\|_{\dot{H}^{-s_{1}}}\Big)\|\mathbb{P}(\diver  G)\|_{\dot{H}^{-s_{1}}}+\|u\|_{\dot{H}^{1+s_{1}-s_{0}}\cap\dot{H}^{2}}^{2}\|\rho\|_{H^{2}}\|\mathbb{P}(\diver  G)\|_{\dot{H}^{1}}\nonumber \\
&\lesssim\Big(\||\nabla|^{1-s_{1}}(\rho u)\|_{L^{2}}\|u\|_{L^{\infty}}+\|\rho u\|_{L^{3}}\||\nabla|^{1-s_{1}} u\|_{L^{6}}+\|\nabla(\rho u)\|_{\dot{H}^{-s_{1}}}\|\nabla u\|_{\dot{H}^{-s_{1}}}\Big)\|\mathbb{P}(\diver  G)\|_{\dot{H}^{-s_{1}}}\nonumber \\
&\ \ \ \ +\|u\|_{\dot{H}^{1+s_{1}-s_{0}}\cap\dot{H}^{2}}^{2}\|\rho\|_{H^{2}}\|\mathbb{P}(\diver  G)\|_{\dot{H}^{1}}\nonumber \\
&\lesssim\Big(\|u\|_{\dot{H}^{1+s_{1}-s_{0}}\cap\dot{H}^{2}}^{2}+\|u\|_{\dot{H}^{1+s_{1}-s_{0}}\cap\dot{H}^{2}}\|\nabla u\|_{\dot{H}^{-s_{1}}}\Big)\|\rho\|_{H^{2}}\|\mathbb{P}(\diver  G)\|_{\dot{H}^{-s_{1}}}+\|u\|_{\dot{H}^{1+s_{1}-s_{0}}\cap\dot{H}^{2}}^{2}\|\rho\|_{H^{2}}\|\mathbb{P}(\diver  G)\|_{\dot{H}^{1}}
\end{align}
and
\begin{align}\label{2.46}
I_{92}&\lesssim\|\nabla(\rho u)\|_{\dot{H}^{-s_{1}}}\|\nabla u\|_{\dot{H}^{-s_{1}}}+\|\nabla(\rho u)\|_{L^{2}}\|\nabla^{3} u\|_{L^{2}}+\|\nabla(\rho u)\|_{\dot{H}^{-s_{1}}}\|u\cdot\nabla G\|_{\dot{H}^{-s_{1}}}+\|\rho u\|_{\dot{H}^{2}}\|\nabla(u\cdot\nabla G)\|_{L^{2}}\nonumber \\
&\ \ \ \ +\|\nabla(\rho u)\|_{\dot{H}^{-s_{1}}}\|Q(\nabla u, G)\|_{\dot{H}^{-s_{1}}}+\|\rho u\|_{\dot{H}^{2}}\|\nabla (Q(\nabla u, G))\|_{L^{2}}\nonumber \\
&\lesssim\Big(\||\nabla|^{1-s_{1}}\rho\|_{L^{2}}\|u\|_{L^{\infty}}+\|\rho\|_{L^{\infty}}\||\nabla|^{1-s_{1}} u\|_{L^{2}}\Big)\Big(\|\nabla u\|_{\dot{H}^{-s_{1}}}+\|\nabla u\|_{\dot{H}^{-s_{1}}\cap\dot{H}^{s_{1}}}\|\nabla G\|_{\dot{H}^{-s_{1}}}\nonumber \\
&\ \ \ \ +\|\nabla^{2} u\|_{\dot{H}^{-s_{1}}\cap\dot{H}^{s_{1}}}\|G\|_{\dot{H}^{-s_{1}}}\Big)+\Big(\|\rho\|_{L^{\infty}}\|\nabla^{2} u\|_{L^{2}}+\|\nabla^{2} \rho\|_{L^{2}}\|u\|_{L^{\infty}}\Big)\Big(\|\nabla^{2} u\|_{L^{2}}\|G\|_{L^{\infty}}+\|\nabla u\|_{L^{6}}\|\nabla G\|_{L^{3}}\nonumber \\
&\ \ \ \ +\|u\|_{L^{\infty}}\|\nabla^{2} G\|_{L^{2}}\Big)+\Big(\|\nabla\rho\|_{L^{2}}\|u\|_{L^{\infty}}+\|\rho\|_{L^{3}}\|\nabla u\|_{L^{6}}\Big)\|\nabla u\|_{\dot{H}^{2}}\nonumber \\
&\lesssim\|\rho\|_{H^{2}}\Big[\|u\|_{\dot{H}^{1-s_{1}}\cap\dot{H}^{2}}^{2}+\|u\|_{\dot{H}^{1+s_{1}-s_{0}}\cap\dot{H}^{2}}\|\nabla u\|_{\dot{H}^{-s_{1}}}(1+\|\nabla G\|_{\dot{H}^{-s_{1}}})+\|u\|_{\dot{H}^{1+s_{1}-s_{0}}\cap\dot{H}^{2}}^{2}\|\nabla G\|_{\dot{H}^{-s_{1}}}\nonumber \\
&\ \ \ \ +\|\nabla u\|_{\dot{H}^{-s_{1}}\cap\dot{H}^{2}}^{2}\|G\|_{\dot{H}^{-s_{1}}}\Big]+\|\rho\|_{H^{2}}\| u\|_{\dot{H}^{1+s_{1}-s_{0}}\cap\dot{H}^{2}}^{2}\|G\|_{\dot{H}^{1-s_{1}}\cap\dot{H}^{2}}+\|\rho\|_{H^{2}}\|u\|_{\dot{H}^{1-s_{1}}\cap\dot{H}^{2}}\|\nabla u\|_{\dot{H}^{2}}.
\end{align}
Substituting (\ref{2.45}) and (\ref{2.46}) into (\ref{2.44}) and integrating (\ref{2.44}) from $0$ to $t$, we get
\begin{align}\label{2.47}
\int_{0}^{t}|I_{9}(t')|\,dt'&\lesssim\sup_{0\leq t'\leq t}\Big(\|\rho u\|_{\dot{H}^{-s_{1}}}\|\mathbb{P}(\diver  G)\|_{\dot{H}^{-s_{1}}}+\|\rho u\|_{\dot{H}^{1}}\|\mathbb{P}(\diver  G)\|_{\dot{H}^{1}}\Big)+\mathcal{E}_{1}(t)\mathcal{E}_{a}^{\frac{1}{2}}(t)\mathcal{E}_{0}^{\frac{1}{2}}(t)\nonumber \\
&\ \ \ \ +\mathcal{E}_{a}^{\frac{1}{2}}(t)\mathcal{E}_{0}(t)+\mathcal{E}_{a}^{\frac{1}{2}}(t)\mathcal{E}_{1}(t)+\mathcal{E}_{a}^{\frac{1}{2}}(t)\mathcal{E}_{1}(t)
\mathcal{E}_{0}^{\frac{1}{2}}(t)+\mathcal{E}_{a}^{\frac{1}{2}}(t)\mathcal{E}_{0}^{\frac{3}{2}}(t)\nonumber \\
&\lesssim\sup_{0\leq t'\leq t}\Big(\|\nabla\rho\|_{\dot{H}^{-s_{1}}\cap\dot{H}^{s_{1}}}\|u\|_{\dot{H}^{-s_{1}}}\|\mathbb{P}(\diver  G)\|_{\dot{H}^{-s_{1}}}+\|\rho\|_{L^{\infty}\cap\dot{H}^{1}}\|u\|_{L^{\infty}\cap\dot{H}^{1}}\|\mathbb{P}(\diver  G)\|_{\dot{H}^{1}}\Big)\nonumber \\
&\ \ \ \ +\mathcal{E}_{a}^{\frac{1}{2}}(t)\Big(\mathcal{E}_{1}(t)\mathcal{E}_{0}^{\frac{1}{2}}(t)
+\mathcal{E}_{0}(t)+\mathcal{E}_{1}(t)+\mathcal{E}_{0}^{\frac{3}{2}}(t)\Big)\nonumber \\
&\lesssim\mathcal{E}_{a}^{\frac{1}{2}}(t)\Big(\mathcal{E}_{0}(t)+\mathcal{E}_{1}(t)+\mathcal{E}_{0}^{\frac{3}{2}}(t)+\mathcal{E}_{1}^{\frac{3}{2}}(t)\Big).
\end{align}

Putting all the estimates of (\ref{2.33}), (\ref{2.35}), (\ref{2.37}), (\ref{2.42}) and (\ref{2.47}) together, and integrating (\ref{2.32}) from $0$ to $t$, we get

\begin{align}\label{2.48}
\mathcal{E}_{02}(t)&=\int_{0}^{t}\|\mathbb{P}(\diver  G)(t')\|_{\dot{H}^{-s_{1}}\cap\dot{H}^{1}}^{2}\,dt'\nonumber \\
&\lesssim\mathcal{E}_{01}^{\frac{1}{2}}(t)\mathcal{E}_{02}^{\frac{1}{2}}(t)+\mathcal{E}_{0}^{\frac{3}{2}}(t)
+\mathcal{E}_{1}^{\frac{3}{2}}(t)+\mathcal{E}_{1}(t)\mathcal{E}_{a}^{\frac{1}{2}}(t)\mathcal{E}_{0}^{\frac{1}{2}}(t)+\mathcal{E}_{total}(t)+\mathcal{E}_{total}^{\frac{3}{2}}(t)\nonumber \\
&\lesssim\mathcal{E}_{0}(0)+\mathcal{E}_{total}(t)+\mathcal{E}_{total}^{\frac{3}{2}}(t)+\mathcal{E}_{total}^{\frac{9}{4}}(t).
\end{align}

From the estimates of (\ref{2.30}) and (\ref{2.48}), we finally derive that
\begin{align}\label{2.49}
\mathcal{E}_{0}(t)\lesssim\mathcal{E}_{0}(0)+\mathcal{E}_{total}(t)+\mathcal{E}_{total}^{\frac{3}{2}}(t)+\mathcal{E}_{total}^{\frac{9}{4}}(t).
\end{align}

Therefore, the proof of Lemma \ref{le2.4} is completed.

\end{proof}

\subsection{Estimates of Fractional Time-weighted Energy $\mathcal{E}_{1}(t)$}
\ \ \ \\
In this subsection, we are ready to establish the estimate of fractional time-weighted energy $\mathcal{E}_{1}(t)$. Precisely, we prove the following result.
\begin{Lemma}\label{le2.5}
Assume that $\mathcal{E}_{1}(t)$ is defined as in (\ref{2.6}). Then the following estimate is given
\begin{equation}\label{2.50}
\mathcal{E}_{1}(t)\lesssim\mathcal{E}_{1}(0)+\mathcal{E}_{total}(t)+\mathcal{E}_{total}^{\frac{3}{2}}(t)
\end{equation}
for any $t>0$.
\end{Lemma}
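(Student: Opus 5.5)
We prove \eqref{2.50} by a time-weighted energy argument on the projected system, treating the two endpoint regularities $k=1+s_{1}-s_{0}$ and $k=2$ separately and then the dissipation of $\mathbb{P}\diver G$.

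\textbf{Step 1: equations for $(u,\mathbb{P}\diver G)$.} Multiply the first equation of \eqref{2.4} by $\rho+1$ and apply $\mathbb{P}$, as in \eqref{2.31}, and write the linear part as $u_t-\Delta u-\mathbb{P}\diver G=N_1$. Here $N_1$ collects $-\mathbb{P}(\rho u_t)-\mathbb{P}((\rho+1)u\cdot\nabla u)$; equivalently one can use \eqref{2.20}, where the nonlinearity is $-\mathbb{P}(u\cdot\nabla u)-\mathbb{P}\big(g(\rho)(\Delta u+\diver G)\big)$ with $g(\rho)=\rho/(\rho+1)$. Apply $\mathbb{P}\diver$ to \eqref{2.3} and use Lemma \ref{le2.2} to write
\[
\partial_t\mathbb{P}\diver G+\mathbb{P}(u\cdot\nabla\mathbb{P}\diver G)-\Delta u=N_2 .
\]
$N_2$ contains $\mathbb{P}(\nabla u\cdot\nabla G)$, $\mathbb{P}(\nabla u\cdot\nabla\Delta^{-1}\diver\diver G)$ and $\mathbb{P}\diver Q(\nabla u,G)$. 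It has no underived $G$ times $u$, so it is quadratic with a derivative on $u$.

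\textbf{Step 2: weighted energy identity.} Apply $\Lambda^{k}$ for $k\in\{1+s_{1}-s_{0},2\}$ to the $u$-equation, and $\Lambda^{k-1}$ to the $\mathbb{P}\diver G$-equation. Take inner products with $\Lambda^k u$ and $\Lambda^{k-1}\mathbb{P}\diver G$; the cross terms $\langle \mathbb{P}\diver G,u\rangle$ and $\langle\Delta u,\mathbb{P}\diver G\rangle$ cancel. To get dissipation of $\mathbb{P}\diver G$, add a small multiple $\delta$ of the cross functional $-\langle\Lambda^{k-1}u,\Lambda^{k-1}\mathbb{P}\diver G\rangle$, whose time derivative produces $\delta\|\mathbb{P}\diver G\|_{\dot H^{k-1}}^2$ minus $\delta\|\nabla u\|^2$-type terms absorbed by the viscous dissipation. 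This mirrors Step 2 of Lemma \ref{le2.4}, but without the $u_t$ integration by parts. For the $\rho u_t$ term, replace $u_t$ through the equation, or use the time-derivative trick \eqref{2.44} with \eqref{1.1}$_1$ and \eqref{2.43}. Multiply the resulting Lyapunov inequality by $(1+t)^{1+2s_1-s_0}$ and integrate. The derivative of the weight gives
\[
(1+2s_1-s_0)\int_0^t(1+t')^{2s_1-s_0}\big(\|u\|_{\dot H^{k}}^2+\|\mathbb{P}\diver G\|_{\dot H^{k-1}}^2\big)\,dt'.
\]
For $k=2$ this is bounded by interpolating between $\dot H^{-s_1}$ and the weighted dissipation. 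For the low index $k=1+s_1-s_0$, interpolate $\|u\|_{\dot H^{1+s_1-s_0}}$ between $\dot H^{-s_1}$ (whose time integral of the gradient norm is in $\mathcal{E}_0$) and $\dot H^{2+s_1-s_0}$. Then apply Young's inequality in time so that the power of $(1+t)$ falling on the $\mathcal{E}_0$ factor is $\le 0$; this is exactly where $s_0>s_1$ enters. The outcome is a bound $\epsilon\,\mathcal{E}_1+C\mathcal{E}_0$, producing the linear term $\mathcal{E}_{total}$ in \eqref{2.50}. The same trick handles the $\mathbb{P}\diver G$ part, using that $\int\|\mathbb{P}\diver G\|_{\dot H^{-s_1}}^2$ is in $\mathcal{E}_0$.

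\textbf{Step 3: nonlinear terms.} Transport terms $u\cdot\nabla u$ and $u\cdot\nabla\mathbb{P}\diver G$ at the negative index $k-1=s_1-s_0$ are handled by the commutator lemma \ref{le5.3} and the bilinear estimate \ref{le5.4}. At $\dot H^2$ and $\dot H^1$ they are handled by cancellation plus Kato–Ponce. Each term is bounded by the weighted dissipation times $\mathcal{E}_{total}^{1/2}$, with the weight split as in \eqref{2.35}. The $\nabla u\cdot\nabla G$ and $Q$ terms are bounded by $\|\nabla u\|_{\dot H^{1+s_1-s_0}\cap\dot H^2}$ times $G$ norms in $\mathcal{E}_0$, giving $\mathcal{E}_1\mathcal{E}_0^{1/2}$.

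\textbf{Main obstacle.} The hardest terms are the density terms $\langle\Lambda^{s_1-s_0}\mathbb{P}(g(\rho)\diver G),\Lambda^{1+s_1-s_0}u\rangle$, and their analogue in the $\mathbb{P}\diver G$ equation. Only $\mathbb{P}\diver G$ dissipates, not $\diver G$ itself. The plan is to write $g(\rho)\diver G=\diver(g(\rho)G)-\nabla g(\rho)\,G$ and place the derivative on $u$. Then apply Lemma \ref{le5.4} with a negative-index factor, bounding it by $\|\rho\|_{\dot H^{s_1-s_0}\cap H^2}\|G\|_{\dot H^{-s_1}\cap\dot H^2}\|\nabla u\|_{\dot H^{1+s_1-s_0}}$. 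This is where the assistant energy $\mathcal{E}_a$, through the bound \eqref{2.17}, is essential. The remaining time integral $\int(1+t)^{1+2s_1-s_0}\|\nabla u\|_{\dot H^{1+s_1-s_0}}\,dt$ is not square-integrable in the weight. I would split it as $(1+t)^{\frac{1+2s_1-s_0}{2}}\|\nabla u\|$ times $(1+t)^{\frac{1+2s_1-s_0}{2}}\|G\|$, and control the latter with the decay of $\|\mathbb{P}\diver G\|$ from $\mathcal{E}_1$ together with interpolation. This requires $s_0<2s_1$ and $s_1<1$; I expect this is the genuine bottleneck. Summing all estimates yields $\mathcal{E}_1(t)\lesssim\mathcal{E}_1(0)+\mathcal{E}_{total}+\mathcal{E}_{total}^{3/2}$.
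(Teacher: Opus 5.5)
Read literally, \eqref{2.50} already follows from \eqref{2.8}, since $\mathcal{E}_1(t)\le\mathcal{E}_{total}(t)$. The paper's own estimates end in \eqref{2.85} with $\mathcal{E}_0(t)+\mathcal{E}_1(t)$ on the right and an unspecified constant (see also \eqref{2.70}, \eqref{2.74}), so they give nothing more than that. You aim higher, at a small coefficient in front of $\mathcal{E}_1$. Your weight-derivative bound is exactly \eqref{2.53}: you interpolate between $\|\nabla u\|_{\dot H^{-s_1}}$ and $\|\nabla u\|_{\dot H^{1+s_1-s_0}}$, and this is where $s_0<2s_1$ (not $s_0>s_1$) is used. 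Judged as that stronger argument, two steps fail.

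\textbf{Low-order cross functional.} For $k=1+s_1-s_0$ you add $\langle\Lambda^{k-1}u,\Lambda^{k-1}\mathbb{P}\diver G\rangle$, i.e.\ you pair at order $s_1-s_0$. This fails in three ways.
Its time derivative gives $\|\mathbb{P}\diver G\|_{\dot H^{s_1-s_0}}^2$, whereas \eqref{2.6} requires $\|\mathbb{P}\diver G\|_{\dot H^{1+s_1-s_0}}^2$.
It also gives $\|u\|_{\dot H^{1+s_1-s_0}}^2=\|\nabla u\|_{\dot H^{s_1-s_0}}^2$. This lies below the viscous dissipation $\|\nabla u\|_{\dot H^{1+s_1-s_0}\cap\dot H^2}^2$ and cannot be absorbed by it.
Finally, once multiplied by $(1+t)^{1+2s_1-s_0}$, the functional is not bounded by the weighted energy. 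However the $2(s_1-s_0)$ derivatives are split, one factor falls below the weighted ranges $\dot H^{1+s_1-s_0}\cap\dot H^2$ for $u$ and $\dot H^{s_1-s_0}\cap\dot H^1$ for $\mathbb{P}\diver G$.

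The paper pairs one order higher, with $\langle u,\mathbb{P}\diver G\rangle_{\dot H^{1+s_1-s_0}\cap\dot H^{1}}$ (terms $J_{91}$--$J_{93}$). At the low level you should use $\Lambda^{k}$, not $\Lambda^{k-1}$; at $k=2$ your $\Lambda^1$ is correct.

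\textbf{Density term.} You are right that only $\mathbb{P}\diver G$ is dissipated, but your remedy cannot work. After writing $g(\rho)\diver G=\diver(g(\rho)G)-\nabla g(\rho)\,G$ you need $(1+t)^{\frac{1+2s_1-s_0}{2}}\|G\|$ to be square integrable in time, and no norm of $G$ has this property. Indeed, $u\equiv0$, $\mathbb{F}\equiv\mathbb{I}$, $\tilde\rho=1+\phi$, $p=\phi$ is a stationary solution of \eqref{1.1} with $G=\phi\,\mathbb{I}$ and $\mathbb{P}\diver G=\mathbb{P}\nabla\phi=0$. So $\mathcal{E}_1\equiv0$ while $\|G\|$ is constant, and the weight has to be carried by $\nabla u$ and $\mathbb{P}\diver G$ only.

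The paper's \eqref{2.63} tries to do this by putting $\rho$ in $\dot H^{s_1-s_0}$ (hence that norm in $\mathcal{E}_a$, see \eqref{2.17}) and $\mathbb{P}\diver G$ in the other slot of Lemma \ref{le5.4}. But there, and likewise in \eqref{2.82}, $\mathbb{P}\diver G$ is simply written where $\diver G$ stands. That is exactly the substitution you observe is unavailable, so the paper does not resolve this obstacle either.

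The root cause is that \eqref{2.20} is not equivalent to \eqref{2.31}. It relies on \eqref{2.18}, and $\frac{1}{\rho+1}\nabla p$ is in general not a gradient. To see this, take $u\equiv0$, $p=\phi$, $\tilde\rho=1+\rho$ and $\mathbb{F}=\sqrt{(1+\phi)/(1+\rho)}\,\mathbb{I}$ with $\nabla\rho$ not parallel to $\nabla\phi$. The non-decaying parts of $\mathbb{P}(\frac{1}{\rho+1}\nabla p)$ and $\mathbb{P}(g(\rho)\diver G)$ cancel only in combination, and \eqref{2.18} discards the first of them.

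A consistent route is the first option in your Step 1. In \eqref{2.31} the $G$-forcing is exactly $\mathbb{P}\diver G$, and $\rho$ enters only through $\mathbb{P}(\rho u_t)$ and $\mathbb{P}(\rho u\cdot\nabla u)$. Solving \eqref{2.31} for $u_t$, absorbing $\mathbb{P}(\rho u_t)$ for small $\rho$, bounds $u_t$ by $\Delta u$, $\mathbb{P}\diver G$ and quadratic terms. At the low level, $\|\rho f\|_{\dot H^{s_1-s_0}}\lesssim\|\rho\|_{L^2}\|f\|_{\dot H^{1+s_1-s_0}}$ (H\"older plus Sobolev and its dual, using $0<s_0-s_1<1$) then places the weight on norms that are available. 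As written, however, you handle the density term in the formulation \eqref{2.20}, where it does not close.
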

\begin{proof}
Similar to the proof strategy of Lemma \ref{le2.4}, we still divide fractional time-weighted energy $\mathcal{E}_{1}(t)$ into the following two parts:
\begin{align*}
&\mathcal{E}_{11}(t)=\sup_{0\leq t'\leq t}(1+t')^{1+2s_{1}-s_{0}}(\|u(t')\|_{\dot{H}^{1+s_{1}-s_{0}}\cap\dot{H}^{2}}^{2}+\|\mathbb{P}\diver  G(t')\|_{\dot{H}^{s_{1}-s_{0}}\cap\dot{H}^{1}}^{2})\nonumber \\
&\ \ \ \ \ \ \ \ \ \ \ \ \ \ +\int_{0}^{t}(1+t')^{1+2s_{1}-s_{0}}\|\nabla u(t')\|_{\dot{H}^{1+s_{1}-s_{0}}\cap\dot{H}^{2}}^{2}\,dt', \\
&\mathcal{E}_{12}(t)=\int_{0}^{t}(1+t')^{1+2s_{1}-s_{0}}\|\mathbb{P}\diver  G(t')\|_{\dot{H}^{1+s_{1}-s_{0}}\cap\dot{H}^{1}}^{2}\,dt'.
\end{align*}
\textbf{Step 1.} The estimate of $\mathcal{E}_{11}(t)$.\\
Applying the operators $\nabla^{1+k}, \nabla^{k}\mathbb{P}{\rm div}, (k=s_{1}-s_{0},1)$ to $(\ref{2.20})_{1}, (\ref{2.20})_{2}$, respectively. We get
\begin{equation}\label{2.51}
\begin{cases}
\nabla^{k+1}u_{t}-\nabla^{k+1}\Delta u+\nabla^{k+1}(u\cdot\nabla u)+\nabla^{k+1}\big[\frac{\rho}{\rho+1}(\Delta u+\diver  G)\big]+\nabla^{k+1}\nabla\tilde{p}=\nabla^{k+1}\diver  G,\\
\nabla^{k}\mathbb{P}\diver  G_{t}+\nabla^{k}\mathbb{P}\diver (u\cdot\nabla G)+\nabla^{k}\mathbb{P}\diver  Q(\nabla u, \mathbb{F}, \tilde{\rho})=\nabla^{k}\Delta u.
\end{cases}
\end{equation}
Then taking inner product with $(\nabla^{k+1}u,\nabla^{k}\mathbb{P}\diver G)$ for $(\ref{2.51})_{1}, (\ref{2.51})_{2}$, respectively. Multiplying the time weight $(1+t')^{1+2s_{1}-s_{0}}$, one gets
\begin{align}\label{2.52}
&\frac{1}{2}\frac{d}{dt}\left[(1+t')^{1+2s_{1}-s_{0}}(\|u(t')\|_{\dot{H}^{1+s_{1}-s_{0}}\cap\dot{H}^{2}}^{2}+\|\mathbb{P}\diver  G(t')\|_{\dot{H}^{s_{1}-s_{0}}\cap\dot{H}^{1}}^{2})\right]\nonumber \\
&+(1+t')^{1+2s_{1}-s_{0}}\|\nabla u(t')\|_{\dot{H}^{1+s_{1}-s_{0}}\cap\dot{H}^{2}}^{2}
=J_{1}+J_{2}+J_{3}+J_{4}+J_{5}+J_{6},
\end{align}
where
\begin{align*}
&J_{1}=\frac{1+2s_{1}-s_{0}}{2}(1+t')^{2s_{1}-s_{0}}(\|u(t')\|_{\dot{H}^{1+s_{1}-s_{0}}\cap\dot{H}^{2}}^{2}+\|\mathbb{P}\diver  G(t')\|_{\dot{H}^{s_{1}-s_{0}}\cap\dot{H}^{1}}^{2});\\
&J_{2}=(1+t')^{1+2s_{1}-s_{0}}\left\langle\diver  G,u\right\rangle_{\dot{H}^{1+s_{1}-s_{0}}\cap\dot{H}^{2}}+(1+t')^{1+2s_{1}-s_{0}}\left\langle\Delta u,\mathbb{P}\diver  G\right\rangle_{\dot{H}^{s_{1}-s_{0}}\cap\dot{H}^{1}};\\
&J_{3}=-(1+t')^{1+2s_{1}-s_{0}}\left\langle u\cdot\nabla u,u\right\rangle_{\dot{H}^{1+s_{1}-s_{0}}\cap\dot{H}^{2}};\\
&J_{4}=-(1+t')^{1+2s_{1}-s_{0}}\left\langle\mathbb{P}\diver  (u\cdot\nabla G),\mathbb{P}\diver  G\right\rangle_{\dot{H}^{s_{1}-s_{0}}\cap\dot{H}^{1}};\\
&J_{5}=-(1+t')^{1+2s_{1}-s_{0}}\left\langle g(\rho)(\diver  G+\Delta u),u\right\rangle_{\dot{H}^{1+s_{1}-s_{0}}\cap\dot{H}^{2}};\\
&J_{6}=-(1+t')^{1+2s_{1}-s_{0}}\left\langle\mathbb{P}\diver  Q(\nabla u,G),\mathbb{P}\diver  G \right\rangle_{\dot{H}^{s_{1}-s_{0}}\cap\dot{H}^{1}}.
\end{align*}
For $J_{1}$, making use of H\"{o}lder's inequality and Lemma \ref{le5.1}, one gets
\begin{align}\label{2.53}
\int_{0}^{t}|J_{1}(t')|\,dt'&\lesssim\int_{0}^{t}\|u(t')\|_{\dot{H}^{1-s_{1}}}^{\frac{2}{1+2s_{1}-s_{0}}}\cdot\Big[
(1+t')^{1+2s_{1}-s_{0}}\|\nabla u(t')\|_{\dot{H}^{1+s_{1}-s_{0}}}^{2}\Big]^{\frac{2s_{1}-s_{0}}{1+2s_{1}-s_{0}}}\,dt'\nonumber \\
&\ \ \ \ +\int_{0}^{t}\|\mathbb{P}\diver  G(t')\|_{\dot{H}^{-s_{1}}}^{\frac{2}{1+2s_{1}-s_{0}}}\cdot\Big[
(1+t')^{1+2s_{1}-s_{0}}\|\mathbb{P}\diver  G(t')\|_{\dot{H}^{1+s_{1}-s_{0}}}^{2}\Big]^{\frac{2s_{1}-s_{0}}{1+2s_{1}-s_{0}}}\,dt' \nonumber \\
&\ \ \ \ +\int_{0}^{t}(1+t')^{1+2s_{1}-s_{0}}(\|\nabla u(t')\|_{\dot{H}^{1+s_{1}-s_{0}}\cap\dot{H}^{2}}^{2}+\|\mathbb{P}\diver  G(t')\|_{\dot{H}^{1}}^{2})\,dt' \nonumber \\
&\lesssim\mathcal{E}_{0}^{\frac{1}{1+2s_{1}-s_{0}}}(t)\mathcal{E}_{1}^{\frac{2s_{1}-s_{0}}{1+2s_{1}-s_{0}}}(t)+\mathcal{E}_{1}(t),
\end{align}
where $s_{0}, s_{1}$ are required to satisfy $\frac{\sqrt{7}-1}{3}<s_{1}<1, \max\{s_{1},\frac{3s_{1}^{2}-2s_{1}+2}{3s_{1}}\}<s_{0}<2s_{1}$.\\
For $J_{2}$, similar to (\ref{2.23}), applying integration by parts and divergence free condition $\diver u=0$, we can obtain
\begin{equation}\label{2.54}
J_{2}=0.
\end{equation}
For $J_{3}$, using Lemma \ref{le5.1} and Lemma \ref{le5.3}, we derive that
\begin{align}\label{2.55}
J_{3}&\lesssim(1+t')^{1+2s_{1}-s_{0}}\Big(\|[|\nabla|^{s_{1}-s_{0}},u\cdot\nabla]\nabla u\|_{L^{2}}\|u\|_{\dot{H}^{1+s_{1}-s_{0}}}+\||\nabla|^{s_{1}-s_{0}}(\nabla u\cdot\nabla u)\|_{L^{2}}\|u\|_{\dot{H}^{1+s_{1}-s_{0}}}\Big)\nonumber \\
&\ \ \ \ +(1+t')^{1+2s_{1}-s_{0}}\|\nabla u\|_{L^{\infty}}\|\nabla^{2} u\|_{L^{2}}^{2}\nonumber \\
&\lesssim(1+t')^{1+2s_{1}-s_{0}}\|\nabla^{2} u\|_{\dot{H}^{-s_{0}+s_{1}}\cap\dot{H}^{s_{0}-s_{1}}}\| u\|_{\dot{H}^{1+s_{1}-s_{0}}}^{2}+(1+t')^{1+2s_{1}-s_{0}}\|\nabla u\|_{\dot{H}^{1+s_{1}-s_{0}}\cap\dot{H}^{2}}\| u\|_{\dot{H}^{2}}^{2}.
\end{align}
Integrating (\ref{2.55}) from $0$ to $t$, we have
\begin{align}\label{2.56}
&\int_{0}^{t}|J_{3}(t')|\,dt'\nonumber \\
&\lesssim\sup_{0\leq t'\leq t}(1+t')^{1+2s_{1}-s_{0}}\|u\|_{\dot{H}^{1+s_{1}-s_{0}}\cap\dot{H}^{2}}^{2}\int_{0}^{t}(1+t')^{-\frac{1+2s_{1}-s_{0}}{2}}(1+t')^{\frac{1+2s_{1}-s_{0}}{2}}\|\nabla u\|_{\dot{H}^{1+s_{1}-s_{0}}\cap\dot{H}^{2}}\,dt'\nonumber \\
&\lesssim\mathcal{E}_{1}^{\frac{3}{2}}(t).
\end{align}
For $J_{4}$, we utilize Lemma \ref{le2.2} and rewrite $J_{4}=J_{41}+J_{42}+J_{43}$, where
\begin{align*}
&J_{41}=-(1+t')^{1+2s_{1}-s_{0}}\left\langle\mathbb{P}(u\cdot\nabla \mathbb{P}\diver  G),\mathbb{P}\diver  G\right\rangle_{\dot{H}^{s_{1}-s_{0}}\cap\dot{H}^{1}};\\
&J_{42}=-(1+t')^{1+2s_{1}-s_{0}}\left\langle\mathbb{P}(\nabla u\cdot\nabla G),\mathbb{P}\diver  G\right\rangle_{\dot{H}^{s_{1}-s_{0}}\cap\dot{H}^{1}};\\
&J_{43}=-(1+t')^{1+2s_{1}-s_{0}}\left\langle\mathbb{P}(\nabla u\cdot\nabla (-\Delta)^{-1}\diver \diver G),\mathbb{P}\diver  G\right\rangle_{\dot{H}^{s_{1}-s_{0}}\cap\dot{H}^{1}}.
\end{align*}
Considering the fact $\mathbb{P}\mathbb{P}=\mathbb{P}$, utilizing integration by parts, divergence free condition and Lemmas \ref{le5.2}- \ref{le5.4},, we can deal with the first two terms $J_{41}, J_{42}$ as follows:
\begin{align}\label{2.57}
J_{41}&=-(1+t')^{1+2s_{1}-s_{0}}\left\langle(u\cdot\nabla \mathbb{P}\diver  G),\mathbb{P}\mathbb{P}\diver  G\right\rangle_{\dot{H}^{s_{1}-s_{0}}\cap\dot{H}^{1}}\nonumber \\
&=-(1+t')^{1+2s_{1}-s_{0}}\left\langle(u\cdot\nabla \mathbb{P}\diver  G),\mathbb{P}\diver  G\right\rangle_{\dot{H}^{s_{1}-s_{0}}\cap\dot{H}^{1}}\nonumber \\
&\lesssim(1+t')^{1+2s_{1}-s_{0}}\Big(\|[|\nabla|^{s_{1}-s_{0}},u\cdot\nabla]\mathbb{P}\diver  G\|_{L^{2}}\|\mathbb{P}\diver  G\|_{\dot{H}^{s_{1}-s_{0}}}+\|\nabla u\|_{L^{\infty}}\|\mathbb{P}\diver  G\|_{\dot{H}^{1}}^{2}\Big)\nonumber \\
&\lesssim(1+t')^{1+2s_{1}-s_{0}}\Big(\|\nabla^{2} u\|_{\dot{H}^{-s_{0}+s_{1}}\cap\dot{H}^{s_{0}-s_{1}}}\|\mathbb{P}\diver  G\|_{\dot{H}^{s_{1}-s_{0}}}^{2}+\|\nabla^{2} u\|_{\dot{H}^{-s_{0}+s_{1}}\cap\dot{H}^{1}}\|\mathbb{P}\diver  G\|_{\dot{H}^{1}}^{2}\Big)
\end{align}
and
\begin{align}\label{2.58}
J_{42}&=-(1+t')^{1+2s_{1}-s_{0}}\Big(\left\langle\mathbb{P}\diver(\nabla u\otimes G),\mathbb{P}\diver  G\right\rangle_{\dot{H}^{s_{1}-s_{0}}}+\left\langle\mathbb{P}(\nabla u\cdot \nabla G),\mathbb{P}\diver  G\right\rangle_{\dot{H}^{1}}\Big)\nonumber \\
&\lesssim(1+t')^{1+2s_{1}-s_{0}}\Big(\|\nabla u\otimes G\|_{\dot{H}^{s_{1}-s_{0}}}\|\nabla\mathbb{P}\diver  G\|_{\dot{H}^{s_{1}-s_{0}}}+\|\nabla u\cdot \nabla G\|_{\dot{H}^{1}}\|\mathbb{P}\diver  G\|_{\dot{H}^{1}}\Big)\nonumber \\
&\lesssim(1+t')^{1+2s_{1}-s_{0}}\Big[\|\nabla^{2} u\|_{\dot{H}^{-s_{0}+s_{1}}\cap\dot{H}^{s_{0}-s_{1}}}\|  G\|_{\dot{H}^{s_{1}-s_{0}}}\|\nabla\mathbb{P}\diver  G\|_{\dot{H}^{s_{1}-s_{0}}}\nonumber \\
&\ \ \ \ +(\|\nabla^{2} u\|_{L^{4}}\|\nabla G\|_{L^{4}}+\|\nabla u\|_{L^{\infty}}\|\nabla^{2} G\|_{L^{2}})\|\mathbb{P}\diver  G\|_{\dot{H}^{1}}\Big].
\end{align}
Integrating (\ref{2.57}), (\ref{2.58}) from $0$ to $t$, we get
\begin{align}\label{2.59}
&\int_{0}^{t}|J_{41}(t')|\,dt'\nonumber \\
&\lesssim\sup_{0\leq t'\leq t}(1+t')^{1+2s_{1}-s_{0}}\|\mathbb{P}\diver  G\|_{\dot{H}^{s_{1}-s_{0}}\cap\dot{H}^{1}}^{2}\int_{0}^{t}\|\nabla^{2} u\|_{\dot{H}^{-s_{0}+s_{1}}\cap\dot{H}^{1}}\,dt'\nonumber \\
&\lesssim\mathcal{E}_{1}^{\frac{3}{2}}(t)
\end{align}
and
\begin{align}\label{2.60}
&\int_{0}^{t}|J_{42}(t')|\,dt'\nonumber \\
&\lesssim\sup_{0\leq t'\leq t}\|G\|_{\dot{H}^{-s_{1}}\cap\dot{H}^{2}}\int_{0}^{t}(1+t')^{1+2s_{1}-s_{0}}\|\nabla^{2} u\|_{\dot{H}^{s_{1}-s_{0}}\cap\dot{H}^{1}}\|\mathbb{P}\diver  G\|_{\dot{H}^{1+s_{1}-s_{0}}}\,dt'\nonumber \\
&\ \ \ \ +\sup_{0\leq t'\leq t}(1+t')^{1+2s_{1}-s_{0}}\|\mathbb{P}\diver  G\|_{\dot{H}^{s_{1}-s_{0}}\cap\dot{H}^{1}}^{2}\int_{0}^{t}\|\nabla^{2} u\|_{\dot{H}^{s_{1}-s_{0}}\cap\dot{H}^{1}}\,dt'\nonumber \\
&\lesssim\mathcal{E}_{0}^{\frac{1}{2}}(t)\mathcal{E}_{1}(t)+\mathcal{E}_{1}^{\frac{3}{2}}(t).
\end{align}
Next, considering the boundedness of Riesz operator $\mathcal{R}_{i}=(-\Delta)^{-\frac{1}{2}}\nabla_{i}$ from $L^{2}$ into itself, we can deal with the last term $J_{43}$ as follows:
\begin{align}\label{2.61}
J_{43}&=-(1+t')^{1+2s_{1}-s_{0}}\Big(\left\langle\nabla u\cdot\nabla (-\Delta)^{-1}\diver \diver G,\mathbb{P}\diver  G\right\rangle_{\dot{H}^{s_{1}-s_{0}}}\nonumber \\
&\ \ \ \ +\left\langle\nabla u\cdot\nabla (-\Delta)^{-1}\diver \diver G,\mathbb{P}\diver  G\right\rangle_{\dot{H}^{1}}\Big)\nonumber \\
&\lesssim(1+t')^{1+2s_{1}-s_{0}}\Big[\|\nabla^{2} u\|_{\dot{H}^{-s_{0}+s_{1}}\cap\dot{H}^{s_{0}-s_{1}}}\|(-\Delta)^{-1}\diver \diver G\|_{\dot{H}^{1+s_{1}-s_{0}}}\|\mathbb{P}\diver  G\|_{\dot{H}^{s_{1}-s_{0}}}\nonumber \\
&\ \ \ \ +(\|\nabla^{2} u\|_{L^{4}}\|\nabla(-\Delta)^{-1}\diver \diver G\|_{L^{4}}+\|\nabla u\|_{L^{\infty}}\|\nabla^{2}(-\Delta)^{-1}\diver \diver G\|_{L^{2}})\|\mathbb{P}\diver  G\|_{\dot{H}^{1}}\Big]\nonumber \\
&\lesssim(1+t')^{1+2s_{1}-s_{0}}\Big(\|\nabla^{2} u\|_{\dot{H}^{-s_{0}+s_{1}}\cap\dot{H}^{s_{0}-s_{1}}}\|\mathbb{P}\diver  G\|_{\dot{H}^{s_{1}-s_{0}}}^{2}+\|\nabla^{2} u\|_{\dot{H}^{-s_{0}+s_{1}}\cap\dot{H}^{1}}\|\mathbb{P}\diver  G\|_{\dot{H}^{s_{1}-s_{0}}\cap\dot{H}^{1}}^{2}\Big).
\end{align}
Integrating (\ref{2.61}) from $0$ to $t$, we have
\begin{align}\label{2.62}
\int_{0}^{t}|J_{43}(t')|\,dt'&\lesssim\sup_{0\leq t'\leq t}(1+t')^{1+2s_{1}-s_{0}}\|\mathbb{P}\diver  G\|_{\dot{H}^{s_{1}-s_{0}}\cap\dot{H}^{1}}^{2}\int_{0}^{t}\|\nabla^{2} u\|_{\dot{H}^{s_{1}-s_{0}}\cap\dot{H}^{1}}\,dt'\nonumber \\
&\lesssim\mathcal{E}_{1}^{\frac{3}{2}}(t).
\end{align}
For $J_{5}$, using H\"{o}lder's inequality, Lemma \ref{le5.1} and Lemma \ref{le5.4}, we get
\begin{align}\label{2.63}
J_{5}&\lesssim(1+t')^{1+2s_{1}-s_{0}}\Big(\|g(\rho)(\Delta u+\diver G)\|_{\dot{H}^{s_{1}-s_{0}}}\|\nabla u\|_{\dot{H}^{1+s_{1}-s_{0}}}+\|g(\rho)(\Delta u+\diver G)\|_{\dot{H}^{1}}\|\nabla u\|_{\dot{H}^{2}}\Big)\nonumber \\
&\lesssim(1+t')^{1+2s_{1}-s_{0}}\Big[(\|g(\rho)\Delta u\|_{\dot{H}^{s_{1}-s_{0}}}+\|g(\rho)\diver G\|_{\dot{H}^{s_{1}-s_{0}}})\|\nabla u\|_{\dot{H}^{1+s_{1}-s_{0}}}+\|g(\rho)(\Delta u+\diver G)\|_{\dot{H}^{1}}\|\nabla u\|_{\dot{H}^{2}}\Big]\nonumber \\
&\lesssim(1+t')^{1+2s_{1}-s_{0}}\Big[(\|\nabla \rho\|_{\dot{H}^{-s_{0}+s_{1}}\cap\dot{H}^{s_{0}-s_{1}}}\| \Delta u\|_{\dot{H}^{s_{1}-s_{0}}}+\|\vert\nabla\vert^{\sigma}\mathbb{P}\diver  G\|_{\dot{H}^{-r_{1}}\cap \dot{H}^{r_{1}}}\|\rho\|_{\dot{H}^{s_{1}-s_{0}}})\|\nabla u\|_{\dot{H}^{1+s_{1}-s_{0}}}\nonumber \\
&\ \ \ \ +(\|\rho\|_{L^{\infty}}\|\nabla(\Delta u+\diver G)\|_{L^{2}}+\|\nabla\rho\|_{L^{3}}\|\Delta u+\diver G\|_{L^{6}})\|\nabla u\|_{\dot{H}^{2}}\Big]\nonumber \\
&\lesssim(1+t')^{1+2s_{1}-s_{0}}\Big[(\|\rho\|_{H^{2}}\|\nabla u\|_{\dot{H}^{1+s_{1}-s_{0}}}+\|\rho\|_{\dot{H}^{s_{1}-s_{0}}}\|\mathbb{P}\diver  G\|_{\dot{H}^{1+s_{1}-s_{0}}\cap\dot{H}^{1}})\|\nabla u\|_{\dot{H}^{1+s_{1}-s_{0}}}\nonumber \\
&\ \ \ \ +\|\rho\|_{H^{2}}(\|\nabla u\|_{\dot{H}^{2}}+\|\mathbb{P}\diver  G\|_{\dot{H}^{1}}+\|\nabla u\|_{\dot{H}^{1+s_{1}-s_{0}}\cap\dot{H}^{2}}+\|\mathbb{P}\diver  G\|_{\dot{H}^{1+s_{1}-s_{0}}\cap\dot{H}^{1}})\|\nabla u\|_{\dot{H}^{2}}\Big].
\end{align}
Integrating (\ref{2.63}) from $0$ to $t$, we have
\begin{align}\label{2.64}
\int_{0}^{t}|J_{5}(t')|\,dt'&\lesssim\sup_{0\leq t'\leq t}\|\rho\|_{H^{2}\cap\dot{H}^{s_{1}-s_{0}}}\Big(\int_{0}^{t}(1+t')^{1+2s_{1}-s_{0}}\|\nabla u\|_{\dot{H}^{1+s_{1}-s_{0}}\cap\dot{H}^{2}}^{2}\,dt'\nonumber \\
&\ \ \ \ +\int_{0}^{t}(1+t')^{1+2s_{1}-s_{0}}\|\mathbb{P}\diver  G\|_{\dot{H}^{1+s_{1}-s_{0}}\cap\dot{H}^{1}}\|\nabla u\|_{\dot{H}^{1+s_{1}-s_{0}}\cap\dot{H}^{2}}\,dt'\Big)\nonumber \\
&\lesssim\mathcal{E}_{a}^{\frac{1}{2}}(t)\mathcal{E}_{1}(t).
\end{align}
Similarly, the term $J_{6}$ can be bounded by
\begin{align}\label{2.65}
J_{6}&\lesssim(1+t')^{1+2s_{1}-s_{0}}\Big(\|Q(\nabla u,G)\|_{\dot{H}^{s_{1}-s_{0}}}\|\nabla \mathbb{P}\diver  G \|_{\dot{H}^{s_{1}-s_{0}}}+\|\diver (Q(\nabla u,G))\|_{\dot{H}^{1}}\|\mathbb{P}\diver  G\|_{\dot{H}^{1}}\Big)\nonumber \\
&\lesssim(1+t')^{1+2s_{1}-s_{0}}\Big[\|\nabla G \|_{\dot{H}^{s_{1}-s_{0}}\cap\dot{H}^{s_{0}-s_{1}}}\|\nabla u \|_{\dot{H}^{s_{1}-s_{0}}}\|\mathbb{P}\diver  G \|_{\dot{H}^{1+s_{1}-s_{0}}}+(\|\nabla u\|_{L^{\infty}}\|\nabla^{2}G\|_{L^{2}}+\|\nabla^{3}u\|_{L^{2}}\|G\|_{L^{\infty}}\nonumber \\
&\ \ \ \ +\|\nabla^{2}u\|_{L^{4}}\|\nabla G\|_{L^{4}})\|\mathbb{P}\diver  G\|_{\dot{H}^{1}}\Big]\nonumber \\
&\lesssim(1+t')^{1+2s_{1}-s_{0}}\Big(\|\nabla G \|_{\dot{H}^{s_{1}-s_{0}}\cap\dot{H}^{1}}\|\nabla u \|_{\dot{H}^{s_{1}-s_{0}}}\|\mathbb{P}\diver  G \|_{\dot{H}^{1+s_{1}-s_{0}}}\nonumber\\
&\qquad\qquad\qquad\qquad +\|\nabla u\|_{\dot{H}^{s_{1}-s_{0}}\cap\dot{H}^{2}}\|\nabla G \|_{\dot{H}^{s_{1}-s_{0}}\cap\dot{H}^{1}}\|\mathbb{P}\diver  G\|_{\dot{H}^{1}}\Big).
\end{align}
Integrating (\ref{2.65}) from $0$ to $t$, we get
\begin{align}\label{2.66}
\int_{0}^{t}|J_{6}(t')|\,dt'\lesssim\mathcal{E}_{1}^{\frac{3}{2}}(t).
\end{align}
Finally, integrating (\ref{2.52}) from $0$ to $t$ and combining the estimates for $J_{1}\sim J_{6}$, we get
\begin{align}\label{2.67}
\mathcal{E}_{11}(t)&=\sup_{0\leq t'\leq t}\left[(1+t')^{1+2s_{1}-s_{0}}(\|u(t')\|_{\dot{H}^{1+s_{1}-s_{0}}\cap\dot{H}^{2}}^{2}+\|\mathbb{P}\diver  G(t')\|_{\dot{H}^{s_{1}-s_{0}}\cap\dot{H}^{1}}^{2})\right]\nonumber \\
&\ \ \ \  \int_{0}^{t}(1+t')^{1+2s_{1}-s_{0}}\|\nabla u(t')\|_{\dot{H}^{1+s_{1}-s_{0}}\cap\dot{H}^{2}}^{2}\,dt'\nonumber \\
&\lesssim\mathcal{E}_{1}(0)+\mathcal{E}_{0}(t)+\mathcal{E}_{1}(t)+\mathcal{E}_{0}^{\frac{3}{2}}(t)+\mathcal{E}_{1}^{\frac{3}{2}}(t)+\mathcal{E}_{a}^{\frac{3}{2}}(t).
\end{align}

\textbf{Step 2.} The estimate of $\mathcal{E}_{12}(t)$.\\
Applying $\nabla^{k}\mathbb{P}(k=1+s_{1}-s_{0},1)$ on the first equation of system (\ref{2.4}), we get
\begin{equation}\label{2.68}
\nabla^{k} u_{t}-\nabla^{k}\Delta u+\nabla^{k}\mathbb{P}(u\cdot\nabla u)+\nabla^{k}\mathbb{P}\bigg[\frac{\rho}{\rho+1}(\Delta u+\diver  G)\bigg]=\nabla^{k}\mathbb{P}\diver G.
\end{equation}
Taking the inner product of (\ref{2.68}) with $\nabla^{k}\mathbb{P}\diver G$, and multiplying the time weight $(1+t')^{1+2s_{1}-s_{0}}$, one gets
\begin{equation}\label{2.69}
(1+t')^{1+2s_{1}-s_{0}}\|\mathbb{P}\diver  G\|_{\dot{H}^{1+s_{1}-s_{0}}\cap\dot{H}^{1}}^{2}=J_{7}+J_{8}+J_{9}+J_{10},
\end{equation}
where
\begin{align*}
&J_{7}=-(1+t')^{1+2s_{1}-s_{0}}\left\langle\Delta u,\mathbb{P}\diver  G\right\rangle_{\dot{H}^{1+s_{1}-s_{0}}\cap\dot{H}^{1}};\\
&J_{8}=(1+t')^{1+2s_{1}-s_{0}}\left\langle\mathbb{P}(u\cdot\nabla u),\mathbb{P}\diver  G \right\rangle_{\dot{H}^{1+s_{1}-s_{0}}\cap\dot{H}^{1}};\\
&J_{9}=(1+t')^{1+2s_{1}-s_{0}}\left\langle u_{t},\mathbb{P}\diver  G \right\rangle_{\dot{H}^{1+s_{1}-s_{0}}\cap\dot{H}^{1}};\\
&J_{10}=(1+t')^{1+2s_{1}-s_{0}}\left\langle\mathbb{P}[g(\rho)(\Delta u+\diver  G)],\mathbb{P}\diver  G\right\rangle_{\dot{H}^{1+s_{1}-s_{0}}\cap\dot{H}^{1}}.
\end{align*}
For $J_{7}$, it is easy to see that
\begin{align}\label{2.70}
\int_{0}^{t}|J_{7}(t')|\,dt'&\lesssim\int_{0}^{t}(1+t')^{1+2s_{1}-s_{0}}\|\Delta u\|_{\dot{H}^{1+s_{1}-s_{0}}\cap\dot{H}^{1}}\|\mathbb{P}\diver  G\|_{\dot{H}^{1+s_{1}-s_{0}}\cap\dot{H}^{1}}\,dt'\nonumber \\
&\lesssim\int_{0}^{t}(1+t')^{1+2s_{1}-s_{0}}\|\nabla u\|_{\dot{H}^{1+s_{1}-s_{0}}\cap\dot{H}^{2}}\|\mathbb{P}\diver  G\|_{\dot{H}^{1+s_{1}-s_{0}}\cap\dot{H}^{1}}\,dt'\nonumber \\
&\lesssim\mathcal{E}_{11}(t)+\mathcal{E}_{12}(t).
\end{align}
For $J_{8}$, using integration by parts, H\"{o}lder's inequality, Lemma \ref{le5.1} and Lemma \ref{le5.4}, we arrive at
\begin{align}\label{2.71}
|J_{8}(t')|&\lesssim(1+t')^{1+2s_{1}-s_{0}}\big(\|\nabla u\cdot\nabla u\|_{\dot{H}^{s_{1}-s_{0}}\cap L^{2}}+\|u\cdot\nabla\nabla u\|_{\dot{H}^{s_{1}-s_{0}}\cap L^{2}}\big)\|\mathbb{P}\diver  G\|_{\dot{H}^{1+s_{1}-s_{0}}\cap\dot{H}^{1}}\nonumber \\
&\lesssim(1+t')^{1+2s_{1}-s_{0}}\big(\|\nabla^{2} u\|_{\dot{H}^{-s_{0}+s_{1}}\cap \dot{H}^{s_{0}-s_{1}}}\|\nabla u\|_{\dot{H}^{s_{1}-s_{0}}}+\|\nabla u\|_{L^{4}}\|\nabla u\|_{L^{4}}\nonumber \\
&\ \ \ \ +\|\nabla u\|_{\dot{H}^{-s_{0}+s_{1}}\cap \dot{H}^{s_{0}-s_{1}}}\|\nabla^{2} u\|_{\dot{H}^{s_{1}-s_{0}}}+\|u\|_{L^{\infty}}\|\nabla^{2} u\|_{L^{2}}\big)\|\mathbb{P}\diver  G\|_{\dot{H}^{1+s_{1}-s_{0}}\cap\dot{H}^{1}}\nonumber \\
&\lesssim(1+t')^{1+2s_{1}-s_{0}}\big(\|\nabla u\|_{\dot{H}^{1+s_{1}-s_{0}}\cap \dot{H}^{2}}\|u\|_{\dot{H}^{1+s_{1}-s_{0}}}+\|u\|_{\dot{H}^{1+s_{1}-s_{0}}\cap \dot{H}^{2}}^{2}\nonumber \\
&\ \ \ \ +\|u\|_{\dot{H}^{1+s_{1}-s_{0}}\cap \dot{H}^{2}}\|\nabla u\|_{\dot{H}^{1+s_{1}-s_{0}}}\big)\|\mathbb{P}\diver  G\|_{\dot{H}^{1+s_{1}-s_{0}}\cap\dot{H}^{1}}.
\end{align}
Integrating (\ref{2.71}) from $0$ to $t$, and using H\"{o}lder's inequality, one gets
\begin{align}\label{2.72}
\int_{0}^{t}|J_{8}(t')|\,dt'\lesssim\mathcal{E}_{11}^{\frac{3}{2}}(t)+\mathcal{E}_{11}^{\frac{1}{2}}(t)\mathcal{E}_{12}(t)
\lesssim\mathcal{E}_{11}^{\frac{3}{2}}(t)+\mathcal{E}_{12}^{\frac{3}{2}}(t).
\end{align}
For $J_{9}$, utilize integration by parts and rewrite $J_{9}=J_{91}+J_{92}+J_{93}$, where
\begin{align*}
&J_{91}=\frac{d}{dt}\bigg[(1+t')^{1+2s_{1}-s_{0}}\left\langle u,\mathbb{P}\diver  G \right\rangle_{\dot{H}^{1+s_{1}-s_{0}}\cap\dot{H}^{1}}\bigg];\\
&J_{92}=-(1+2s_{1}-s_{0})(1+t')^{2s_{1}-s_{0}}\left\langle u,\mathbb{P}\diver  G \right\rangle_{\dot{H}^{1+s_{1}-s_{0}}\cap\dot{H}^{1}};\\
&J_{93}=-(1+t')^{1+2s_{1}-s_{0}}\left\langle u,\mathbb{P}\diver  G_{t}\right\rangle_{\dot{H}^{1+s_{1}-s_{0}}\cap\dot{H}^{1}}.
\end{align*}
Integrating $J_{91},J_{92}$ from $0$ to $t$, we get
\begin{align}\label{2.73}
\int_{0}^{t}|J_{91}(t')|\,dt'&\lesssim\sup_{0\leq t'\leq t}(1+t')^{1+2s_{1}-s_{0}}\big(\|\nabla^{2}u\|_{\dot{H}^{s_{1}-s_{0}}}\|\mathbb{P}\diver  G\|_{\dot{H}^{1+s_{1}-s_{0}}}+\|\nabla u\|_{L^{2}}\|\mathbb{P}\diver  G\|_{\dot{H}^{1}}\big)\nonumber \\
&\lesssim\mathcal{E}_{11}(t).
\end{align}
\begin{align}\label{2.74}
\int_{0}^{t}|J_{92}(t')|\,dt'&\lesssim\int_{0}^{t}(1+t')^{-1}(1+t')^{\frac{1+2s_{1}-s_{0}}{2}}\|u\|_{\dot{H}^{1+s_{1}-s_{0}}\cap\dot{H}^{1}}(1+t')^{\frac{1+2s_{1}-s_{0}}{2}}\|\mathbb{P}\diver  G\|_{\dot{H}^{1+s_{1}-s_{0}}\cap\dot{H}^{1}}\,dt'\nonumber \\
&\lesssim\mathcal{E}_{11}^{\frac{1}{2}}(t)\mathcal{E}_{12}^{\frac{1}{2}}(t)\lesssim\mathcal{E}_{11}(t)+\mathcal{E}_{12}(t).
\end{align}
For the last term $J_{93}$, applying projection operator $\mathbb{P}$ on the second equation of system (\ref{2.4}), we obtain
\begin{align}\label{2.75}
J_{93}&=-(1+t')^{1+2s_{1}-s_{0}}\left\langle u,\Delta u-\mathbb{P}\diver  (u\cdot\nabla G)-\mathbb{P}\diver Q(\nabla u, G)\right\rangle_{\dot{H}^{1+s_{1}-s_{0}}\cap\dot{H}^{1}}\nonumber \\
&:=J_{931}+J_{932}+J_{933}.
\end{align}
We can directly deal with the term $J_{931}$ as follows:
\begin{align}\label{2.76}
\int_{0}^{t}|J_{931}(t')|\,dt'&\lesssim\int_{0}^{t}(1+t')^{1+2s_{1}-s_{0}}\|\nabla u\|_{\dot{H}^{1+s_{1}-s_{0}}\cap\dot{H}^{1}}^{2}\,dt'\nonumber \\
&\lesssim\mathcal{E}_{11}(t).
\end{align}
For $J_{932}$, utilizing Lemma \ref{le2.2} and Lemma \ref{le5.4}, and considering the boundedness of Riesz operator $\mathcal{R}_{i}=(-\Delta)^{-\frac{1}{2}}\nabla_{i}$ from $L^{2}$ into itself. By the same argument adopted in handing $J_{8}$, we have
\begin{align}\label{2.77}
J_{932}&=(1+t')^{1+2s_{1}-s_{0}}\left\langle u,\mathbb{P}(u\cdot\nabla\mathbb{P}\diver G)+\mathbb{P}(\nabla u\cdot\nabla G)-\mathbb{P}(\nabla u\cdot\nabla(-\Delta)^{-1}\diver\diver G)\right\rangle_{\dot{H}^{1+s_{1}-s_{0}}\cap\dot{H}^{1}}\nonumber \\
&\lesssim(1+t')^{1+2s_{1}-s_{0}}\|\nabla u\|_{\dot{H}^{1+s_{1}-s_{0}}\cap\dot{H}^{1}}\big(\|u\cdot\nabla\mathbb{P}\diver G\|_{\dot{H}^{s_{1}-s_{0}}\cap L^{2}}+\|\nabla u\cdot\nabla G\|_{\dot{H}^{s_{1}-s_{0}}\cap L^{2}}\nonumber \\
&\ \  \  \ +\|\nabla u\cdot\nabla(-\Delta)^{-1}\diver\diver G\|_{\dot{H}^{s_{1}-s_{0}}\cap L^{2}}\big)\nonumber \\
&\lesssim(1+t')^{1+2s_{1}-s_{0}}\|\nabla u\|_{\dot{H}^{1+s_{1}-s_{0}}\cap\dot{H}^{1}}\big(\|\nabla u\|_{\dot{H}^{-s_{0}+s_{1}}\cap \dot{H}^{s_{0}-s_{1}}}\|\nabla\mathbb{P}\diver G\|_{\dot{H}^{s_{1}-s_{0}}\cap L^{2}}\nonumber \\
&\ \  \  \ +\|\nabla^{2} u\|_{\dot{H}^{-s_{0}+s_{1}}\cap \dot{H}^{s_{0}-s_{1}}}\|\nabla G\|_{\dot{H}^{s_{1}-s_{0}}\cap L^{2}}\big).
\end{align}
Integrating (\ref{2.77}) from $0$ to $t$, we get
\begin{align}\label{2.78}
\int_{0}^{t}|J_{932}(t')|\,dt'&\lesssim\sup_{0\leq t'\leq t}\|(u,G)\|_{\dot{H}^{-s_{1}}\cap\dot{H}^{2}}\int_{0}^{t}(1+t')^{1+2s_{1}-s_{0}}\|\nabla u\|_{\dot{H}^{1+s_{1}-s_{0}}\cap\dot{H}^{1}}\big(\|\nabla\mathbb{P}\diver G\|_{\dot{H}^{s_{1}-s_{0}}\cap L^{2}}\nonumber \\
&\ \  \  \ +\|\nabla^{2} u\|_{\dot{H}^{-s_{0}+s_{1}}\cap \dot{H}^{s_{0}-s_{1}}}\big)\,dt'\nonumber \\
&\lesssim\mathcal{E}_{0}^{\frac{1}{2}}(t)\mathcal{E}_{1}(t)\lesssim\mathcal{E}_{0}^{\frac{3}{2}}(t)+\mathcal{E}_{1}^{\frac{3}{2}}(t).
\end{align}
Using integration by parts, H\"{o}lder's inequality, Lemma \ref{le5.1} and Lemma \ref{le5.4}, we can deal with the term $J_{933}$ as follows:
\begin{align}\label{2.79}
|J_{933}(t')|&\lesssim(1+t')^{1+2s_{1}-s_{0}}\|\nabla u\|_{\dot{H}^{1+s_{1}-s_{0}}\cap\dot{H}^{1}}\|Q(\nabla u, G)\|_{\dot{H}^{1+s_{1}-s_{0}}\cap\dot{H}^{1}}\nonumber \\
&\lesssim(1+t')^{1+2s_{1}-s_{0}}\|\nabla u\|_{\dot{H}^{1+s_{1}-s_{0}}\cap\dot{H}^{1}}\big(\|Q(\nabla u, \nabla G)\|_{\dot{H}^{s_{1}-s_{0}}\cap L^{2}}+\|Q(\nabla\nabla u, G)\|_{\dot{H}^{s_{1}-s_{0}}\cap L^{2}}\big)\nonumber \\
&\lesssim(1+t')^{1+2s_{1}-s_{0}}\|\nabla u\|_{\dot{H}^{1+s_{1}-s_{0}}\cap\dot{H}^{1}}\big(\|\nabla^{2} u\|_{\dot{H}^{-s_{0}+s_{1}}\cap \dot{H}^{s_{0}-s_{1}}}\|\nabla G\|_{\dot{H}^{s_{1}-s_{0}}}+\|\nabla u\|_{L^{\infty}}\|\nabla G\|_{L^{2}}\nonumber \\
&\ \ \ \ +\|\nabla G\|_{\dot{H}^{-s_{0}+s_{1}}\cap \dot{H}^{s_{0}-s_{1}}}\|\nabla^{2} u\|_{\dot{H}^{s_{1}-s_{0}}}+\|G\|_{L^{\infty}}\|\nabla^{2} u\|_{L^{2}}\big)\nonumber \\
&\lesssim(1+t')^{1+2s_{1}-s_{0}}\|\nabla u\|_{\dot{H}^{1+s_{1}-s_{0}}\cap\dot{H}^{1}}\big(\|\nabla^{2} u\|_{\dot{H}^{-s_{0}+s_{1}}\cap \dot{H}^{s_{0}-s_{1}}}\|\nabla G\|_{\dot{H}^{s_{1}-s_{0}}\cap L^{2}}\nonumber \\
&\ \ \ \ +\|\nabla G\|_{\dot{H}^{-s_{0}+s_{1}}\cap \dot{H}^{s_{0}-s_{1}}}\|\nabla^{2} u\|_{\dot{H}^{s_{1}-s_{0}}\cap L^{2}}\big).
\end{align}
Integrating (\ref{2.79}) from $0$ to $t$, we get
\begin{align}\label{2.80}
\int_{0}^{t}|J_{933}(t')|\,dt'&\lesssim\sup_{0\leq t'\leq t}\|G\|_{\dot{H}^{-s_{1}}\cap\dot{H}^{2}}\int_{0}^{t}(1+t')^{1+2s_{1}-s_{0}}\|\nabla u\|_{\dot{H}^{1+s_{1}-s_{0}}\cap\dot{H}^{2}}^{2}\,dt'\nonumber \\
&\lesssim\mathcal{E}_{0}^{\frac{1}{2}}(t)\mathcal{E}_{1}(t)\lesssim\mathcal{E}_{0}^{\frac{3}{2}}(t)+\mathcal{E}_{1}^{\frac{3}{2}}(t).
\end{align}
Combining the estimates for $J_{91}\sim J_{93}$ and integrating $J_{9}$ from $0$ to $t$, we get
\begin{align}\label{2.81}
\int_{0}^{t}|J_{9}(t')|\,dt'\lesssim\mathcal{E}_{0}^{\frac{3}{2}}(t)+\mathcal{E}_{1}^{\frac{3}{2}}(t)+\mathcal{E}_{1}(t).
\end{align}
For $J_{10}$, using integration by parts, H\"{o}lder's inequality, Lemma \ref{le5.1} and Lemma \ref{le5.4}, we have
\begin{align}\label{2.82}
|J_{10}(t')|&\lesssim(1+t')^{1+2s_{1}-s_{0}}\big(\|\nabla(g(\rho))(\Delta u+\diver  G)\|_{\dot{H}^{s_{1}-s_{0}}\cap L^{2}}\nonumber \\
&\ \ \ \ +\|g(\rho)\nabla(\Delta u+\diver  G)\|_{\dot{H}^{s_{1}-s_{0}}\cap L^{2}}\big)\|\mathbb{P}\diver  G\|_{\dot{H}^{1+s_{1}-s_{0}}\cap\dot{H}^{1}}\nonumber \\
&\lesssim(1+t')^{1+2s_{1}-s_{0}}\big(\|\nabla(g(\rho))\Delta u\|_{\dot{H}^{s_{1}-s_{0}}\cap L^{2}}+\|\nabla(g(\rho))\diver  G\|_{\dot{H}^{s_{1}-s_{0}}\cap L^{2}}+\|g(\rho)\nabla\Delta u\|_{\dot{H}^{s_{1}-s_{0}}\cap L^{2}}\nonumber \\
&\ \ \ \ +\|g(\rho)\nabla\diver  G\|_{\dot{H}^{s_{1}-s_{0}}\cap L^{2}}\big)\|\mathbb{P}\diver  G\|_{\dot{H}^{1+s_{1}-s_{0}}\cap\dot{H}^{1}}\nonumber \\
&\lesssim(1+t')^{1+2s_{1}-s_{0}}\big(\|\vert\nabla\vert^{\sigma}\Delta u\|_{\dot{H}^{-r_{1}}\cap \dot{H}^{r_{1}}}\|\nabla(g(\rho))\|_{\dot{H}^{s_{1}-s_{0}}}\nonumber \\
&\ \ \ \ +\|\nabla(g(\rho))\|_{L^{3}}\|\Delta u\|_{L^{6}}+\|\vert\nabla\vert^{\sigma}\mathbb{P}\diver  G\|_{\dot{H}^{-r_{1}}\cap \dot{H}^{r_{1}}}\|\nabla(g(\rho))\|_{\dot{H}^{s_{1}-s_{0}}}\nonumber\\
&\ \ \ \ +\|\nabla(g(\rho))\|_{L^{2}}\|\mathbb{P}\diver  G\|_{L^{\infty}}+\|g(\rho)\|_{L^{\infty}}\|(\nabla\Delta  u,\nabla\mathbb{P}\diver  G)\|_{L^{2}}\nonumber \\
&\ \ \ \ +\|\nabla(g(\rho))\|_{\dot{H}^{-s_{0}+s_{1}}\cap \dot{H}^{s_{0}-s_{1}}}\|(\nabla\Delta  u,\nabla\mathbb{P}\diver  G)\|_{\dot{H}^{s_{1}-s_{0}}}\big)\|\mathbb{P}\diver  G\|_{\dot{H}^{1+s_{1}-s_{0}}\cap\dot{H}^{1}}.
\end{align}
Integrating (\ref{2.82}) from $0$ to $t$, we get
\begin{align}\label{2.83}
\int_{0}^{t}&|J_{10}(t')|\,dt'\nonumber\\
&\lesssim\sup_{0\leq t'\leq t}\|\rho\|_{H^{2}}\int_{0}^{t}(1+t')^{1+2s_{1}-s_{0}}\big(\|\nabla u \|_{\dot{H}^{1+s_{1}-s_{0}}\cap\dot{H}^{2}}+\|\mathbb{P}\diver  G\|_{\dot{H}^{1+s_{1}-s_{0}}\cap\dot{H}^{1}}\big)\|\mathbb{P}\diver  G\|_{\dot{H}^{1+s_{1}-s_{0}}\cap\dot{H}^{1}}\,dt'\nonumber \\
&\lesssim\mathcal{E}_{a}^{\frac{1}{2}}(t)\mathcal{E}_{1}(t)\lesssim\mathcal{E}_{a}^{\frac{3}{2}}(t)+\mathcal{E}_{1}^{\frac{3}{2}}(t).
\end{align}
Combining the estimates of (\ref{2.70}), (\ref{2.72}), (\ref{2.81}), and (\ref{2.83}) together, and integrating (\ref{2.69}) from $0$ to $t$, we have
\begin{align}\label{2.84}
\mathcal{E}_{12}(t)&=\int_{0}^{t}(1+t')^{1+2s_{1}-s_{0}}\|\mathbb{P}\diver  G(t')\|_{\dot{H}^{1+s_{1}-s_{0}}\cap\dot{H}^{1}}^{2}\,dt'\nonumber \\
&\lesssim\mathcal{E}_{1}(t)+\mathcal{E}_{0}^{\frac{3}{2}}(t)+\mathcal{E}_{1}^{\frac{3}{2}}(t)+\mathcal{E}_{a}^{\frac{3}{2}}(t).
\end{align}

Finally, combining the estimates of (\ref{2.67}) and (\ref{2.84}) together, we can conclude that
\begin{align}\label{2.85}
\mathcal{E}_{1}(t)&\lesssim\mathcal{E}_{1}(0)+\mathcal{E}_{0}(t)+\mathcal{E}_{1}(t)+\mathcal{E}_{0}^{\frac{3}{2}}(t)+\mathcal{E}_{1}^{\frac{3}{2}}(t)+\mathcal{E}_{a}^{\frac{3}{2}}(t).
\end{align}

\end{proof}

\section{ Proof of Theorem 1.1 }\label{se3}
In this section, we first give the local well-posedness of the solution of the Cauchy problem \eqref{1.1}--\eqref{1.1'}, which can be established by the standard method in \cite{Chemin-Masmoudi2001}.

\begin{Proposition}
Assume that the initial data $(u_{0},G_{0})\in \dot{H}^{-s_{1}}(\mathbb{R}^{2})\cap\dot{H}^{2}(\mathbb{R}^{2}), (\tilde{\rho}_{0}-1)\in \dot{H}^{s_{1}-s_{0}}(\mathbb{R}^{2})\cap H^{2}(\mathbb{R}^{2}), (\mathbb{F}_{0}-\mathbb{I})\in H^{2}(\mathbb{R}^{2})$. Then there exists a constant $T_{1}>0$ such that the Cauchy problem \eqref{1.1}--\eqref{1.1'} possesses a unique solution $(u, \tilde{\rho}, \mathbb{F})$ satisfying
\begin{equation*}
\Big(u,(\tilde{\rho}-1),(\mathbb{F}-\mathbb{I})\Big)\in C\Big(0,T_{1};(\dot{H}^{-s_{1}}\cap\dot{H}^{2})\times (\dot{H}^{s_{1}-s_{0}}\cap H^{2})\times H^{2}\Big).
\end{equation*}
\end{Proposition}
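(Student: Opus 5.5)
The local well-posedness will follow from a standard iteration (Friedrichs/linearization) scheme, in the spirit of \cite{Chemin-Masmoudi2001}. Working with the reformulated system, we set $u^{0}=u_{0}$, $\rho^{0}=\rho_{0}$, $\mathbb{F}^{0}=\mathbb{F}_{0}$. Given the $n$-th iterate, we solve three linear problems in turn.

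First, we solve the transport equations
\[
\partial_t\rho^{n+1}+u^{n}\cdot\nabla\rho^{n+1}=0,\qquad \partial_t\mathbb{F}^{n+1}+u^{n}\cdot\nabla\mathbb{F}^{n+1}=\nabla u^{n}\mathbb{F}^{n+1}.
\]
Second, we set $G^{n+1}=(1+\rho^{n+1})\mathbb{F}^{n+1}(\mathbb{F}^{n+1})^{\top}-\mathbb{I}$, which automatically satisfies the analogue of \eqref{2.3}. Third, we solve the linear Stokes-type problem with variable density
\[
(1+\rho^{n+1})\partial_t u^{n+1}-\Delta u^{n+1}+\nabla p^{n+1}=-(1+\rho^{n+1})u^{n}\cdot\nabla u^{n}+\diver G^{n+1},\qquad \diver u^{n+1}=0.
\]

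Since $\|\rho\|_{L^\infty}\lesssim\|\rho\|_{H^2}$ is small, the density stays bounded above and below, so this is a uniformly parabolic Stokes system. Its solvability follows from Galdi-type theory as in Remark 2.1, with the pressure handled through \eqref{2.18}. The transport equations are solved by characteristics, since $u^{n}\in L^{1}_{T}W^{1,\infty}$.

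The next step is uniform bounds on a short time interval $[0,T_{1}]$. We fix $M$ equal to twice the size of the initial norm, and show by induction that
\[
\sup_{[0,T_{1}]}\Big(\|u^{n}\|_{\dot H^{-s_1}\cap\dot H^2}+\|\rho^{n}\|_{H^2\cap\dot H^{s_1-s_0}}+\|\mathbb{F}^{n}-\mathbb{I}\|_{H^2}+\|G^n\|_{\dot H^{-s_1}\cap \dot H^2}\Big)+\|\nabla u^{n}\|_{L^2_{T_1}(\dot H^{-s_1}\cap\dot H^{2})}\le M.
\]
The estimates are exactly the unweighted versions of those in Lemmas \ref{le2.3}--\ref{le2.4}: the $\dot H^{-s_1}$ commutator bounds from Lemma \ref{le5.3}, the product estimates of Lemma \ref{le5.4}, and the $\dot H^{s_1-s_0}$ estimate of $\diver(\rho u)$ from \eqref{2.16}. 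Here the time integrals are bounded instead by $T_{1}^{1/2}$ times $L^{2}_{T}$ norms, which removes all the delicate time-weight bookkeeping. For $G^{n+1}$ we either estimate it directly through its equation or bound it as a product in $H^2$. The negative-index part $\|G\|_{\dot H^{-s_1}}$ must be propagated through the $G$-equation, since $H^2$ products do not control it. Choosing $T_{1}$ small relative to $M$ closes the induction.

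Convergence comes from contraction in a low norm. We estimate the differences $\delta u=u^{n+1}-u^{n}$, $\delta\rho$, $\delta\mathbb{F}$ in $L^\infty_{T}L^2\cap L^2_T\dot H^1$ for $\delta u$ and in $L^\infty_{T}L^2$ for $\delta\rho$ and $\delta\mathbb{F}$. The transport differences lose one derivative, but this is absorbed by the $H^2$ bounds on the previous iterate. Shrinking $T_{1}$ yields a contraction factor $1/2$, so the sequence converges in the low norm. Interpolation with the uniform high-norm bounds then gives convergence in intermediate spaces, which suffices to pass to the limit in the nonlinear terms. The limit lies in $L^\infty$ of the stated spaces by weak-$*$ lower semicontinuity.

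Continuity in time at the top regularity follows from the standard argument: weak continuity plus continuity of the norms, the latter obtained from the energy identities, or via a Bona--Smith regularization for the transport parts. Uniqueness is the same difference estimate applied to two solutions.

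The main obstacle will be the difference estimate for the momentum equation. The pressure difference cannot be removed by $\mathbb{P}$ because the density is variable. We would handle it by multiplying by $\delta u$, using $\diver\delta u=0$ so that $\int\nabla\delta p\cdot\delta u=0$, and writing the density difference term as $\delta\rho\,\partial_t u^{n}$. This term requires $\partial_t u^{n}\in L^2_T L^\infty$ or $L^2_T L^4$, which we would extract from the equation via the $H^2$ bounds. A second, smaller issue is propagating the $\dot H^{-s_1}$ and $\dot H^{s_1-s_0}$ low-frequency norms, which relies on the commutator and div-form estimates already used in the paper.
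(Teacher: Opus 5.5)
Your overall scheme (linearize, get uniform bounds on a short interval, contract in $L^\infty_TL^2$ with $\delta u\in L^2_T\dot H^1$, then prove time continuity and uniqueness) is the standard one the paper only invokes via \cite{Chemin-Masmoudi2001}. Several parts are sound: the transport step, the check that $G^{n+1}$ satisfies \eqref{2.3} with $u^n$, and the $L^2$ difference estimate, where you correctly use $\int\nabla\delta p\cdot\delta u=0$ on the undivided equation.

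The gap is in the uniform high-norm bounds for the linear variable-density Stokes step. You call these ``exactly the unweighted versions'' of Lemmas \ref{le2.3}--\ref{le2.4}, with the pressure handled through \eqref{2.18}. That device does not work, because $\frac{1}{1+\rho}\nabla p$ is not a gradient in general. Its curl is $\nabla^{\perp}(\frac{1}{1+\rho})\cdot\nabla p$, which vanishes only where $\nabla\rho\parallel\nabla p$. Solving the Poisson equation of Remark 2.1 only produces $\nabla\tilde p=\nabla\Delta^{-1}\diver(\frac{1}{1+\rho}\nabla p)$. So passing from \eqref{2.4} to \eqref{2.20} silently discards $\mathbb{P}(\frac{1}{1+\rho}\nabla p)=-\mathbb{P}(g(\rho)\nabla p)$. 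Hence the pressure does not drop out when you pair the divided momentum equation with $u$ in $\dot H^{-s_1}$ or $\dot H^{2}$, and nothing in Lemma \ref{le2.4} controls it. The same issue reappears in your contraction step: you can read off $\partial_t u^n\in L^2_TL^4$ from the equation only once you know $\nabla p^n\in L^2_TL^4$.

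What you need is an actual pressure estimate. Write $a=\frac{1}{1+\rho}$ and let $F$ be the right-hand side of your linear problem.

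\emph{Elliptic bounds.} Since $\diver\partial_t u=0$, the pressure solves $\diver(a\nabla p)=\diver\big(a(\Delta u+F)\big)$. Testing with $p$ gives $\|\nabla p\|_{L^2}\lesssim\|\Delta u+F\|_{L^2}$. Expanding $a\Delta p=-\nabla a\cdot\nabla p+\nabla a\cdot(\Delta u+F)+a\diver F$ and using $\|\nabla p\|_{L^4}\lesssim\|\nabla p\|_{L^2}^{1/2}\|\nabla^2p\|_{L^2}^{1/2}$ bounds $\|\nabla^2 p\|_{L^2}$ by $\|\nabla^3u\|_{L^2}$, $\|F\|_{H^1}$ and lower-order terms. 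The constants depend only on the two-sided bounds of $\tilde\rho$ and on $\|\rho\|_{H^2}$.

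\emph{The $\dot H^2$ estimate.} Apply $\Delta$ to the divided equation and pair with $\Delta u$. The dissipation is $\int a|\nabla\Delta u|^2$ plus the commutator $\int(\nabla a\otimes\Delta u):\nabla\Delta u$. Using $\diver u=0$, the top-order pressure contribution collapses to $\int(\partial_i a\,\partial_k p-\partial_k a\,\partial_i p)\,\partial_k\Delta u_i$. This is lower order and absorbable by the bounds above.

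\emph{The $\dot H^{-s_1}$ estimate.} Write $a\nabla p=\nabla p-g(\rho)\nabla p$ and drop $\nabla p$ using $\diver u=0$. Then bound $\|g(\rho)\nabla p\|_{\dot H^{-s_1}}\lesssim\|\rho\|_{L^{2/s_1}}\|\nabla p\|_{L^2}$ by Hardy--Littlewood--Sobolev. The same bound disposes of every product in the negative norms at the local level.

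Finally, the proposition does not assume $\rho_0$ small, so you should not invoke smallness of $\|\rho\|_{L^\infty}$. The argument above needs only the two-sided density bounds, which transport preserves.
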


With the help of the global priori estimates in section \ref{se2} and Proposition 3.1, we are devoted to the proof of Theorem 1.1.
Considering the total energy $\mathcal{E}_{total}(t)=\mathcal{E}_{0}(t)+\mathcal{E}_{1}(t)+\mathcal{E}_{a}(t)$, the estimates of Lemma \ref{le2.3}, Lemma \ref{le2.4} and Lemma \ref{le2.5}, employing Young's inequality and Lemma \ref{le5.1}, we can choose some constant $C_{0}$ such that
\begin{equation}\label{4.1}
\mathcal{E}_{total}(t)\leq C_{0}\mathcal{E}_{total}(0)+C_{0}\mathcal{E}_{total}(t)+C_{0}\mathcal{E}_{total}^{\frac{3}{2}}(t)+C_{0}\mathcal{E}_{total}^{\frac{9}{4}}(t).
\end{equation}

Applying the assumptions of initial data in Theorem \ref{th1.1}, we can choose the constant $\varepsilon>0$ to be sufficiently small so that
\begin{equation*}
C_{0}\mathcal{E}_{total}(0)\leq\frac{\varepsilon}{2}.
\end{equation*}
From the existence of local solution in Proposition \ref{4.1} and the standard energy method, there exists a time $T>0$ such that
\begin{equation}\label{4.2}
\mathcal{E}_{total}(t)\leq \varepsilon,\ \ \  \forall \ \ t\in[0,T].
\end{equation}
Let $T_{max}$ is the lifespan of solutions to (\ref{1.1}) by
\begin{equation*}
T_{max}:=\sup\left\{t: \sup_{0\leq s\leq t}\mathcal{E}_{total}(t)\leq \varepsilon\right\}.
\end{equation*}
Combining the continuation argument and $\varepsilon$ is small enough, from (\ref{4.1}), we can conclude that $T_{max}=\infty$.
Therefore, the proof of Theorem 1.1 is completed.

\appendix
\renewcommand{\appendixname}{Appendix~\Alph{section}}

\section{Tools}\label{appendix}

In the appendix, we state some useful results that have been frequently used in the previous sections.

The following is the general Gagliardo-Nirenberg inequality:

\begin{Lemma}\label{le5.1}
Let $1\leq q, r\leq \infty$ and $0\leq m\leq\alpha< l$, then we have
\begin{equation}\label{5.1}
\|\nabla^{\alpha}f\|_{L^p}\lesssim \|\nabla^{m}f\|_{L^q}^{1-\theta}\|\nabla^{l}f\|_{L^r}^{\theta},
\end{equation}
where $\frac{\alpha}{l}\leq\theta\leq1$ and $\alpha$ satisfies
\begin{equation*}
  \frac{\alpha}{2}-\frac1p=\left(\frac{m}{2}-\frac{1}{q}\right)(1-\theta)+\left(\frac{l}{2}-\frac{1}{r}\right)\theta
\end{equation*}
with the following exceptional cases: \\
$(1)$ If $\alpha=0, rl<2, q=\infty$, then one needs the additional assumption that either $f$ tends to zero at infinity or $f\in L^{\tilde{q}}$ for some finite $\tilde{q}>0$;\\
$(2)$ If $1<r<\infty$, and $l-\alpha-\frac{2}{r}$ is a non negative integer, then the interpolation inequalities in (\ref{5.1}) hold only for $\theta$ satisfying $\frac{\alpha}{l}\leq \theta<1$;\\
$(3)$ The interpolation inequalities in (\ref{5.1}) also hold for fractional derivatives.
\end{Lemma}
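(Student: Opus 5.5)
The statement immediately above is the Gagliardo--Nirenberg interpolation inequality of Lemma \ref{le5.1}, in the general form also allowing fractional derivatives. My plan treats two cases separately: the case $p=q=r=2$ with fractional indices, which is the one used throughout Section \ref{se2}, and the general integer-order Lebesgue case, which is the classical result of Nirenberg and which I would take from the literature rather than reprove.

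\textbf{Hilbert case ($p=q=r=2$).} The scaling relation then reduces to $\alpha=(1-\theta)m+\theta l$. By Plancherel the estimate becomes
\[
\int|\xi|^{2\alpha}|\hat f|^2\,d\xi=\int\big(|\xi|^{2m}|\hat f|^2\big)^{1-\theta}\big(|\xi|^{2l}|\hat f|^2\big)^{\theta}\,d\xi .
\]
Applying H\"older's inequality with exponents $\frac1{1-\theta}$ and $\frac1\theta$ to the right-hand side gives the bound $\|f\|_{\dot H^m}^{2(1-\theta)}\|f\|_{\dot H^l}^{2\theta}$, with constant $1$. This argument works for every real $m\le\alpha\le l$, including negative indices. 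That covers uses such as $\|\nabla u\|_{\dot H^{s_1-s_0}}^{1-\theta}\|\nabla u\|_{\dot H^{2}}^{\theta}$.

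\textbf{Mixed-exponent cases ($L^\infty$, $L^3$, $L^6$ bounds).} For these I first use the homogeneous Sobolev embedding $\dot H^{1-2/p}\hookrightarrow L^p$ for $2\le p<\infty$, proved via Hardy--Littlewood--Sobolev for the Riesz potential. This moves the left-hand side to an $L^2$-based homogeneous norm, and I then apply the Hilbert case.

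For $L^\infty$ bounds I argue directly. I write $\|f\|_{L^\infty}\le\int|\hat f|\,d\xi$ and split at $|\xi|=R$. Cauchy--Schwarz gives
\[
\int_{|\xi|\le R}|\hat f|\,d\xi\lesssim R^{1-a}\|f\|_{\dot H^a}\quad(a<1),
\qquad
\int_{|\xi|>R}|\hat f|\,d\xi\lesssim R^{1-b}\|f\|_{\dot H^b}\quad(b>1).
\]
Optimizing in $R$ yields $\|f\|_{L^\infty}\lesssim\|f\|_{\dot H^a}^{1-\theta}\|f\|_{\dot H^b}^{\theta}$ with $\theta=\frac{1-a}{b-a}$. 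This is the form invoked in \eqref{2.10} and \eqref{2.16}.

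\textbf{General integer-order case.} For arbitrary $q,r$ with integer $m,l$, the standard route is Nirenberg's. One first proves the one-derivative case $\|\nabla f\|_{L^p}\lesssim\|f\|_{L^q}^{1/2}\|\nabla^2f\|_{L^r}^{1/2}$ by integrating by parts and applying H\"older. One then inducts on the number of derivatives, using log-convexity of $k\mapsto\|\nabla^k f\|$.

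\textbf{Main obstacle.} The delicate point is the exceptional cases: the endpoint where $l-\alpha-2/r$ is a nonnegative integer, and the case $q=\infty$. There the embedding into $L^\infty$ or BMO fails. I would simply cite Nirenberg's theorem for these, noting that they require $\theta<1$ or decay of $f$ at infinity. For fractional derivatives with non-$L^2$ exponents, I would cite the Besov/Littlewood--Paley interpolation in \cite{Bahouri-Chemin-Danchin2011}.
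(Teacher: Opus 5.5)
Your proposal is correct, but it takes a genuinely different route from the paper. The paper's proof is only a pointer to Nirenberg's 1959 theorem. You instead prove directly the $L^2$-based configurations that Section~\ref{se2} actually uses:
\begin{itemize}
\item Plancherel plus H\"older for $p=q=r=2$ at arbitrary real orders;
\item a low/high-frequency splitting for the $L^\infty$ bounds;
\item $\dot{H}^{1-2/p}\hookrightarrow L^p$ followed by the Hilbert case for the $L^3$, $L^4$, $L^6$ bounds.
\end{itemize}
You defer only general $q,r$ and the endpoint cases to the literature.

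This buys more than self-containedness. Nirenberg's theorem concerns integer derivatives with a zeroth-order lower norm. Several applications in the paper fall outside it, and even outside the hypotheses of Lemma~\ref{le5.1} as written.

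\emph{The $L^\infty$ bounds.} In \eqref{2.10} and \eqref{2.16} the paper uses $\|u\|_{L^\infty}\lesssim\|u\|_{\dot{H}^{1+s_1-s_0}}^{1-\theta}\|u\|_{\dot{H}^{3}}^{\theta}$. This has $\alpha=0<m=1+s_1-s_0$, contrary to the hypothesis $m\le\alpha$. Your splitting with $a=1+s_1-s_0<1<b=3$ gives exactly $\theta=\frac{s_0-s_1}{2+s_0-s_1}$.

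\emph{The interpolations in \eqref{2.53}.} There $\dot{H}^{1+s_1-s_0}$ is interpolated between $\dot{H}^{1-s_1}$ and $\dot{H}^{2+s_1-s_0}$. Scaling forces $\theta=\frac{2s_1-s_0}{1+2s_1-s_0}$, which is smaller than $\frac{\alpha}{l}$; in general $\frac{\alpha-m}{l-m}<\frac{\alpha}{l}$ whenever $m>0$. So this $\theta$ lies outside the stated range $\frac{\alpha}{l}\le\theta\le 1$. The companion interpolation of $\dot{H}^{s_1-s_0}$ between $\dot{H}^{-s_1}$ and $\dot{H}^{1+s_1-s_0}$ is at negative order. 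Your Plancherel--H\"older argument covers both.

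Two caveats.
\begin{itemize}
\item Your Fourier-side arguments tacitly require $\hat f$ to be a locally integrable function, e.g.\ $f\in L^2$, since homogeneous norms do not see constants. This is the counterpart of the decay assumption in exceptional case (1).
\item Item (3), read literally for all $q,r\in[1,\infty]$, cannot be cited wholesale from \cite{Bahouri-Chemin-Danchin2011}. It fails at some $L^1$ endpoints: for example, $\|\,|\nabla|^{s}f\|_{L^{1/s}}\lesssim\|f\|_{L^\infty}^{1-s}\|\nabla f\|_{L^1}^{s}$ with $0<s<1$ is violated by mollified indicators of a disk. So confining the self-contained part to the $L^2$-based cases, as you do, is the right call.
\end{itemize}
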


\begin{proof}
See Theorem (pp. 125--126) in \cite{Nirenberg1959}.
\end{proof}

\begin{Lemma}\label{le2.1}
If any smooth function $g(\cdot)$ is defined around $0$ with $g(0)=0$, which satisfies
\begin{equation*}
\text{$g(\rho)\sim\rho$\ \ \  and  \ \ \  $\|g^{(k)}(\rho)\|_{L^{2}}\leq C(k)$\ \ \ \ \ \  for any \ \  $0\leq k \leq2$ },
\end{equation*}
then it holds that
\begin{align*}
&\|g(\rho)\|_{L^{p}}\lesssim\|\rho\|_{L^{p}}, \ \ \ \text{for some~$p$ with $1\leq p\leq\infty$} , \\
&\|\nabla^{k}g(\rho)\|_{L^{p}}\lesssim\|\nabla^{k}\rho\|_{L^{p}}, \ \ \ k=1,2.
\end{align*}
\end{Lemma}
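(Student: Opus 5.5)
The plan is to prove both bounds pointwise via the chain rule, using that $\rho$ takes values in a fixed compact neighbourhood of $0$. In our application $\|\rho\|_{L^\infty}\lesssim\|\rho\|_{H^{2}}\lesssim\mathcal{E}_{a}^{\frac12}(t)$ is small. I would therefore read the hypothesis ``$\|g^{(k)}\|\le C(k)$'' as: $g^{(k)}$ is bounded by $C(k)$ on the interval $[-\|\rho\|_{L^\infty},\|\rho\|_{L^\infty}]$, for $0\le k\le 2$. I also assume $\|\rho\|_{L^\infty}\le 1$ (or any fixed bound), since $g(\rho)=\frac{\rho}{\rho+1}$ is only defined for $\rho>-1$.

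\textbf{Zeroth-order bound.} Since $g(0)=0$, the mean value theorem gives
$$|g(\rho(x))|=\Big|\int_0^1 g'(\tau\rho(x))\,d\tau\Big|\,|\rho(x)|\le C(1)|\rho(x)|$$
pointwise. Taking the $L^p$ norm yields $\|g(\rho)\|_{L^p}\lesssim\|\rho\|_{L^p}$ for every $1\le p\le\infty$. This is also the precise meaning of ``$g(\rho)\sim\rho$''.

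\textbf{First-order bound.} By the chain rule $\nabla g(\rho)=g'(\rho)\nabla\rho$, so $|\nabla g(\rho)|\le C(1)|\nabla\rho|$ pointwise. Hence $\|\nabla g(\rho)\|_{L^p}\lesssim\|\nabla\rho\|_{L^p}$ directly.

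\textbf{Second-order bound (main obstacle).} Here
$$\nabla^2 g(\rho)=g'(\rho)\nabla^2\rho+g''(\rho)\nabla\rho\otimes\nabla\rho.$$
The first term is handled exactly as in the first-order case. The quadratic term $g''(\rho)\nabla\rho\otimes\nabla\rho$ is not pointwise controlled by $\nabla^2\rho$, so this is the key difficulty. I would bound it by $C(2)\|\nabla\rho\|_{L^{2p}}^2$ and then apply Lemma \ref{le5.1} with $m=0$, $\alpha=1$, $l=2$, $\theta=\frac12$, $q=\infty$, $r=p$ (the scaling relation checks in two dimensions), which gives
$$\|\nabla\rho\|_{L^{2p}}^2\lesssim\|\rho\|_{L^\infty}\|\nabla^2\rho\|_{L^p}.$$
Combining the two terms,
$$\|\nabla^2 g(\rho)\|_{L^p}\lesssim\big(1+\|\rho\|_{L^\infty}\big)\|\nabla^2\rho\|_{L^p}\lesssim\|\nabla^2\rho\|_{L^p},$$
using the uniform bound on $\|\rho\|_{L^\infty}$ (available from the $H^2$ smallness in $\mathcal{E}_a(t)$). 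The constant is therefore uniform along the solution, which is exactly how the lemma is used for $g(\rho)=\frac{\rho}{\rho+1}$ in the estimates of $I_4$, $J_5$ and $J_{10}$. The one technical point to check is the admissibility of the interpolation exponents, in particular for $p=\infty$ and in the exceptional cases of Lemma \ref{le5.1}. For $p=\infty$ the interpolation step is not needed, since there the quadratic term is at least bounded by $\|\nabla\rho\|_{L^\infty}^2$. For $p<\infty$ case (1) is satisfied because $\rho\in L^2$.
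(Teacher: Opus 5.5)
Your argument is correct. The paper gives no proof of its own and only cites Proposition 2.2 of \cite{Zhu2022}; your route (chain rule, then Gagliardo--Nirenberg $\|\nabla\rho\|_{L^{2p}}^{2}\lesssim\|\rho\|_{L^{\infty}}\|\nabla^{2}\rho\|_{L^{p}}$ under an a priori $L^\infty$ bound on $\rho$) is the standard proof of such composition estimates. Reading the garbled hypothesis as pointwise bounds on $g'$ and $g''$ over the range of $\rho$ is the right interpretation. For $g(\rho)=\frac{\rho}{\rho+1}$, though, you need $\|\rho\|_{L^\infty}\le\frac12$ (say), not merely $\le 1$, for those bounds to be uniform; the smallness of $\mathcal{E}_a$ provides this.

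One slip: your remark on $p=\infty$ in the second-order estimate does not close the argument. Bounding $g''(\rho)\nabla\rho\otimes\nabla\rho$ by $\|\nabla\rho\|_{L^{\infty}}^{2}$ gives nothing in terms of $\|\nabla^{2}\rho\|_{L^{\infty}}$. No separate treatment is needed, however. The same interpolation $\|\nabla\rho\|_{L^{\infty}}^{2}\lesssim\|\rho\|_{L^{\infty}}\|\nabla^{2}\rho\|_{L^{\infty}}$ holds; this is Lemma \ref{le5.1} with $p=q=r=\infty$ and $\theta=\frac12$, i.e.\ the Landau--Kolmogorov inequality. It is not an exceptional case, because exceptional case (2) requires $1<r<\infty$. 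Your appeal to exceptional case (1) is also unnecessary, since that case concerns $\alpha=0$ and here $\alpha=1$.
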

\begin{proof}
See Proposition 2.2 in \cite{Zhu2022}.
\end{proof}

In our arguments, we also need to use the following fractional Leibniz rule and the estimates for commutators related to fractional negative derivatives:
\begin{Lemma}\label{le5.2}
Let $s>0$, $n\geq2$, $1<p<\infty$, and $1\leq p_{1},q_{2}\leq\infty$. Then it holds that
\begin{equation*}
\|\,\vert\nabla\vert^{s}(fg)\|_{L^{p}(\mathbb{R}^{n})}
\lesssim\|\,\vert\nabla\vert^{s}f\|_{L^{p_{1}}(\mathbb{R}^{n})}
\|g\|_{L^{q_{1}}(\mathbb{R}^{n})}+\|f\|_{L^{p_{2}}(\mathbb{R}^{n})}
\|\vert\nabla\vert^{s}g\|_{L^{q_{2}}(\mathbb{R}^{n})},
\end{equation*}
where
\begin{align*}
\frac{1}{p}=\frac{1}{p_{1}}+\frac{1}{q_{1}}=\frac{1}{p_{2}}+\frac{1}{q_{2}}.
\end{align*}
\end{Lemma}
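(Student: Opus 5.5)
The statement is the fractional Leibniz rule (Kato--Ponce type inequality) of Lemma \ref{le5.2}. I would prove it by Littlewood--Paley theory and Bony's paraproduct decomposition. Writing $fg = T_f g + T_g f + R(f,g)$, with $T_f g=\sum_j S_{j-1}f\,\Delta_j g$ and $R(f,g)=\sum_{|j-k|\le1}\Delta_j f\,\Delta_k g$, it suffices to bound $\|\,|\nabla|^s(\cdot)\|_{L^p}$ of each of the three pieces. Since $1<p<\infty$, the Littlewood--Paley square function characterization gives
\[
\|\,|\nabla|^s h\|_{L^p}\sim\Big\|\Big(\sum_j 2^{2js}|\Delta_j h|^2\Big)^{1/2}\Big\|_{L^p}.
\]
This is where the restriction $1<p<\infty$ is used. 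The endpoints $p_1,q_2\in[1,\infty]$ are allowed because they enter only through maximal-function and Hölder bounds on factors that are not square-summed.

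For the paraproduct $T_g f$, each block $S_{j-1}g\,\Delta_j f$ has Fourier support in an annulus of size $2^j$. Hence
\[
\|\,|\nabla|^s T_g f\|_{L^p}\lesssim \Big\|\Big(\sum_j 2^{2js}|S_{j-1}g|^2|\Delta_j f|^2\Big)^{1/2}\Big\|_{L^p}\le \Big\|\sup_j|S_{j-1}g|\Big\|_{L^{q_1}}\,\Big\|\Big(\sum_j 2^{2js}|\Delta_j f|^2\Big)^{1/2}\Big\|_{L^{p_1}}.
\]
I then bound $\sup_j|S_{j-1}g|\lesssim Mg$ pointwise, use the maximal function on $L^{q_1}$ (or trivially when $q_1=\infty$), and apply the square function again in $L^{p_1}$. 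This gives the first term on the right. The term $T_f g$ is symmetric and produces $\|f\|_{L^{p_2}}\|\,|\nabla|^s g\|_{L^{q_2}}$.

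The main obstacle is the remainder $R(f,g)$, whose blocks are supported only in balls, not annuli. Here $s>0$ is essential. I would write
\[
\Delta_k R=\sum_{j\ge k-3}\Delta_k(\Delta_j f\,\tilde\Delta_j g),
\]
and bound
\[
2^{ks}|\Delta_k R|\lesssim \sum_{j\ge k-3}2^{-(j-k)s}M\big(2^{js}\Delta_j f\,\tilde\Delta_j g\big).
\]
The geometric factor $2^{-(j-k)s}$, summable only because $s>0$, combined with Young's inequality on sequences and the Fefferman--Stein vector-valued maximal inequality in $L^p$ (again needing $1<p<\infty$), reduces the estimate to
\[
\Big\|\Big(\sum_j 2^{2js}|\Delta_j f|^2|\tilde\Delta_j g|^2\Big)^{1/2}\Big\|_{L^p}\le\|Mg\|_{L^{q_1}}\Big\|\Big(\sum_j 2^{2js}|\Delta_j f|^2\Big)^{1/2}\Big\|_{L^{p_1}},
\]
which is bounded as before. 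Alternatively, and more briefly, I would simply cite the standard Kato--Ponce/Grafakos--Oh result, which states exactly this inequality, for the endpoint cases $p_1=\infty$ or $q_2=\infty$, where the square-function equivalence fails in $L^\infty$ and a BMO-type argument is needed.
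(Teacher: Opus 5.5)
Your route differs from the paper's. The paper gives no argument: its proof of Lemma~\ref{le5.2} is a one-line citation of Kato--Ponce. You instead prove the inequality from Bony's decomposition:
\begin{itemize}
\item the square-function characterization of $\|\,|\nabla|^s\cdot\|_{L^p}$, which is where $1<p<\infty$ enters;
\item the maximal function on the low-frequency factor of each paraproduct;
\item Fefferman--Stein plus the summable factor $2^{-(j-k)s}$ for the remainder, which is where $s>0$ enters.
\end{itemize}
For $1<p_1,q_2<\infty$, with $q_1,p_2\in(1,\infty]$, this is a complete and correct proof. It also gives exactly the homogeneous $|\nabla|^s$ version with general H\"older exponents that the paper uses, e.g.\ the $L^2\times L^\infty$ and $L^3\times L^6$ splittings in \eqref{2.28} and \eqref{2.36}. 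The classical Kato--Ponce estimate is usually stated for $J^s=(1-\Delta)^{s/2}$ with the pairing $(p,\infty)$, $(\infty,p)$, so your argument matches the statement more closely than the paper's reference does. What the bare citation buys is brevity and the endpoint cases.

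One claim must be fixed. You say the endpoints for $p_1,q_2$ are harmless because they ``enter only through maximal-function and H\"older bounds on factors that are not square-summed.'' This is backwards. $p_1$ and $q_2$ are the exponents of the factors carrying $|\nabla|^s$, i.e.\ exactly the square-summed ones. It is $q_1$ and $p_2$ that pass through $M$ and may be $\infty$ for free. The value $1$ never occurs anyway, since $1/p_1\le 1/p<1$.

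For $p_1=\infty$, the quantity $\|(\sum_j 2^{2js}|\Delta_j f|^2)^{1/2}\|_{L^\infty}$ is not controlled by $\|\,|\nabla|^s f\|_{L^\infty}$. The remainder still survives: put $\sup_j 2^{js}|\Delta_j f|\lesssim\|\,|\nabla|^s f\|_{L^\infty}$ on $f$ and the square function on $g$. The paraproduct $T_g f$, however, genuinely needs the $L^p\times\mathrm{BMO}$ bound (a Carleson-measure argument, or Coifman--Meyer with an $L^\infty$ entry).

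So your closing appeal to Grafakos--Oh is a necessary part of the proof, not an optional shortcut. The paper actually uses this case: \eqref{2.16} applies the lemma with $q_2=\infty$, in the term $\|\rho\|_{L^2}\|\,|\nabla|^{1+s_1-s_0}u\|_{L^\infty}$. Drop the earlier sentence and state explicitly that the cases $p_1=\infty$ or $q_2=\infty$ rest on that citation, or on the BMO paraproduct estimate.
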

\begin{proof}
The proof of this estimate can be consulted in \cite{Kato-Ponce1988}.
\end{proof}

\begin{Lemma}\label{le5.3}
Let $r_{0}$ be a real number with $-1<r_{0}<0$. For any divergence free vector function $u\in L^{2}(\mathbb{R}^{2})$ and function $v\in L^{2}(\mathbb{R}^{2})$, it holds that
\begin{equation*}
\|[\vert\nabla\vert^{r_{0}},u\cdot\nabla]v\|_{L^{2}}\lesssim\|\nabla^{2}u\|_{\dot{H}^{-r_{1}}\cap\dot{H}^{r_{1}}}\|v\|_{\dot{H}^{r_{0}}},
\end{equation*}
where $r_{1}$ is an arbitrarily positive number.
\end{Lemma}
\begin{proof}
See Proposition 2.3 in \cite{Chen-Zhu2023}.
\end{proof}

Moreover, we also need to use the following bilinear estimate related to fractional negative derivatives:
\begin{Lemma}\label{le5.4}
Let $r_{0}$ be a real number with $-1<r_{0}<0$. For any functions $f,g\in L^{2}(\mathbb{R}^{2})$, we have
\begin{equation*}
  \|\,|\nabla\vert^{r_{0}}(fg)\|_{L^2}\leq \|\,|\nabla\vert^{\sigma}f\|_{\dot{H}^{-r_{1}}\cap\dot{H}^{r_{1}}}\|g\|_{\dot{H}^{r_{0}}},
\end{equation*}
if $\sigma$ and $r_1$ satisfy one of the following two assumptions:
\begin{enumerate}
  \item $\sigma=1,\ r_1>0$;
  \item $\max\{\frac{2}{3},2-4r_{1},-r_{0}+r_{1}\}<\sigma<1,\ \frac{1}{4}<r_{1}<-r_{0}$.
\end{enumerate}
\end{Lemma}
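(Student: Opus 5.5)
The natural route is through the Fourier side, via a paraproduct-free splitting of the convolution $\widehat{fg}=\hat f*\hat g$. By Plancherel and duality, it suffices to bound
\[
\Big|\int_{\mathbb{R}^2}\int_{\mathbb{R}^2}|\xi|^{r_0}\hat f(\xi-\eta)\hat g(\eta)\overline{\hat h(\xi)}\,d\eta\,d\xi\Big|
\]
for $h\in L^2$ with $\|h\|_{L^2}=1$. Setting $G(\eta)=|\eta|^{r_0}|\hat g(\eta)|$, so that $\|G\|_{L^2}=\|g\|_{\dot H^{r_0}}$, I would split the integration region into two pieces.

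On the region $|\xi|\ge|\eta|/2$ we have $|\xi|^{r_0}\lesssim|\eta|^{r_0}$, since $r_0<0$. The integrand is then bounded by $|\hat f(\xi-\eta)|\,G(\eta)\,|\hat h(\xi)|$, and Young's inequality gives the bound $\|\hat f\|_{L^1}\|G\|_{L^2}$. It remains to bound $\|\hat f\|_{L^1}$ by $\||\nabla|^\sigma f\|_{\dot H^{-r_1}\cap\dot H^{r_1}}$. By Cauchy--Schwarz, split at $|\zeta|=1$:
\[
\int_{|\zeta|\le1}|\hat f|\le\Big(\int_{|\zeta|\le1}|\zeta|^{-2(\sigma-r_1)}\,d\zeta\Big)^{1/2}\|f\|_{\dot H^{\sigma-r_1}},
\qquad
\int_{|\zeta|>1}|\hat f|\le\Big(\int_{|\zeta|>1}|\zeta|^{-2(\sigma+r_1)}\,d\zeta\Big)^{1/2}\|f\|_{\dot H^{\sigma+r_1}}.
\]
In two dimensions the first integral is finite iff $\sigma-r_1<1$, and the second iff $\sigma+r_1>1$. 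For case (1), $\sigma=1$ and $r_1>0$ gives both. For case (2), with $\sigma<1$, I need $r_1>1-\sigma$; this should follow from $\sigma>2-4r_1$ combined with $r_1>1/4$, and if it does not, I would use that constraint in the second region instead.

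On the region $|\xi|<|\eta|/2$ we have $|\xi-\eta|\sim|\eta|$, so low output frequency arises from high--high interaction. Here I would write $|\xi|^{r_0}|\hat f(\xi-\eta)||\hat g(\eta)|\lesssim |\xi|^{r_0}|\xi-\eta|^{-r_0}|\hat f(\xi-\eta)|\,G(\eta)$ and set $F(\zeta)=|\zeta|^{-r_0}|\hat f(\zeta)|$. The singular weight $|\xi|^{r_0}$ is locally in $L^2$ because $r_0>-1$. The cleanest way to handle it is to move it onto $h$: Hardy--Littlewood--Sobolev gives $\||\nabla|^{r_0}h\|_{L^{p}}\lesssim\|h\|_{L^2}$ with $1/p=1/2-r_0/2$ (after dualizing). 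The problem then reduces to an $L^{p'}$ bound on the product $(\Lambda^{-r_0}f)\,g$-type term, estimated by Hölder and Sobolev embedding. The $f$-factor must lie in $L^{q}$ with $1/q=-r_0/2$, and in case (1) this is controlled by $\|\nabla f\|_{\dot H^{-r_1}\cap\dot H^{r_1}}$ through the same low/high frequency splitting. In case (2) the lower bounds $\sigma>-r_0+r_1$ and $\sigma>2/3$ should be exactly what makes the exponents $\sigma\pm r_1+r_0$ land in the admissible range for the $L^1$/Sobolev embedding of $|\zeta|^{-r_0}\hat f$.

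I expect the main obstacle to be this high--high region in case (2). There, matching the Sobolev exponents to the constraints $\max\{2/3,\,2-4r_1,\,-r_0+r_1\}<\sigma$ and $1/4<r_1<-r_0$ is delicate, and I would adjust the Hölder pairing (possibly interpolating between $\dot H^{\sigma-r_1}$ and $\dot H^{\sigma+r_1}$) until each condition is used. Case (1) should reduce to the known Proposition 2.4 of \cite{Chen-Zhu2023}, with the argument above reproducing it.
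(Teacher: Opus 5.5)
Your low--high region $|\xi|\ge|\eta|/2$ is the paper's $Q_1$. You move $|\xi|^{r_0}\lesssim|\eta|^{r_0}$ onto $g$ and bound $\|\hat f\|_{L^1}$ by splitting at $|\zeta|=1$, which requires $\sigma-r_1<1<\sigma+r_1$. One correction: $\sigma+r_1>1$ does not follow from $\sigma>2-4r_1$ and $r_1>\frac14$ alone (take $r_1=0.45$, $\sigma=0.3$). You also need $\sigma>\frac23$, and then it holds, since $\sigma+r_1>2-3r_1>1$ if $r_1<\frac13$, and $\sigma+r_1>\frac23+\frac13$ if $r_1\ge\frac13$. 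These two lower bounds are what the paper's splitting $|\xi|^{-\frac34\sigma}\cdot|\xi|^{-\frac14\sigma-r_1}$ on $|\xi|>1$ needs when each factor is put in $L^4$. Your fallback of shifting the constraint to the other region would not rescue this step, because the $L^1$ bound on $\hat f$ genuinely needs $\sigma+r_1>1$.

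In the high--high region $|\eta|>2|\xi|$ your route is genuinely different from the paper's. The paper stays on the frequency side:
\begin{itemize}
\item it bounds $\|\,|\xi|^{r_0}H\|_{L^2}$, with $H=F*G$, by $\|H\|_{L^{p_0}}+\|H\|_{L^{p_1}}$ via weighted H\"older on $|\xi|\le1$ and $|\xi|>1$, where $2<p_1<\frac{2}{1+r_0}<p_0$;
\item it applies Young's inequality $\|F*G\|_{L^p}\le\|F\|_{L^{2p/(p+2)}}\|G\|_{L^2}$;
\item it splits $F=|\zeta|^{-r_0}|\hat f|$ at $|\zeta|=1$ again, which forces $\frac{2}{\sigma+r_0+r_1}<p_i<\frac{2}{\sigma+r_0-r_1}$; this is where it invokes $\sigma\ge-r_0+r_1$.
\end{itemize}
Your physical-space route works. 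As written, however, it applies HLS and H\"older to Fourier-side moduli, and that needs one step you did not state. Define $a,b,\tilde h$ by $\hat a=F$, $\hat b=G$, $\widehat{\tilde h}=|\hat h|$. Since $1\lesssim|\xi-\eta|^{-r_0}|\eta|^{r_0}$ on $|\eta|>2|\xi|$ and everything is nonnegative, you may extend the $\eta$-integral to all of $\mathbb{R}^2$. The high--high part is then $\lesssim\langle|\nabla|^{r_0}(ab),\tilde h\rangle$. Now use three facts:
\begin{itemize}
\item HLS in the form $\|\,|\nabla|^{r_0}\phi\|_{L^2}\lesssim\|\phi\|_{L^{2/(1-r_0)}}$. (In $\|\,|\nabla|^{r_0}h\|_{L^p}\lesssim\|h\|_{L^2}$ the exponent is $\frac1p=\frac12+\frac{r_0}{2}$, not $\frac12-\frac{r_0}{2}$.)
\item H\"older with $a\in L^{-2/r_0}$ and $b\in L^2$.
\item The Sobolev embedding $\dot H^{1+r_0}\hookrightarrow L^{-2/r_0}$.
\end{itemize}
Together they give
\[
\|\,|\nabla|^{r_0}(ab)\|_{L^2}\lesssim\|a\|_{L^{-2/r_0}}\|b\|_{L^2}\lesssim\|a\|_{\dot H^{1+r_0}}\|b\|_{L^2}\lesssim\|f\|_{\dot H^{1}}\|g\|_{\dot H^{r_0}}.
\]

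So the step you flagged as the main obstacle is immediate. Since $\sigma-r_1<1<\sigma+r_1$, the same condition as in the low--high region, you have $\|f\|_{\dot H^1}\le\|f\|_{\dot H^{\sigma-r_1}}+\|f\|_{\dot H^{\sigma+r_1}}$, and this handles cases (1) and (2) together. The hypotheses $\sigma>-r_0+r_1$ and $r_1<-r_0$ are never used, contrary to your expectation. The paper's route needs only H\"older and Young on the Fourier side, with no auxiliary functions. Yours gives a one-line high--high estimate and shows the lemma really only needs $\sigma-r_1<1<\sigma+r_1$.
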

\begin{proof}
For the case $(1)$, it has been proved in \cite{Chen-Zhu2023}. Next, it suffices to prove the case $(2)$. By using the Fourier transform, we can obtain
\begin{align}\label{5.2}
\mathscr{F}(\vert\nabla\vert^{r_{0}}(fg))(\xi)&=\int_{\mathbb{R}^{2}}|\xi|^{r_{0}}\hat{f}(\xi-\eta)\hat{g}(\eta)\,d\eta \nonumber \\
&=\int_{|\eta|\leq2|\xi|}|\xi|^{r_{0}}\hat{f}(\xi-\eta)\hat{g}(\eta)\,d\eta+\int_{|\eta|>2|\xi|}|\xi|^{r_{0}}\hat{f}(\xi-\eta)\hat{g}(\eta)\,d\eta \nonumber \\
&:=Q_{1}+Q_{2}.
\end{align}
For the case $|\eta|\leq2|\xi|$ and $r_{0}<0$, we have
\begin{equation}\label{5.3}
Q_{1}\lesssim\int_{|\eta|\leq2|\xi|}|\hat{f}(\xi-\eta)||\eta|^{r_{0}}|\hat{g}(\eta)|\,d\eta
\end{equation}
and
\begin{align}\label{5.4}
\|Q_{1}\|_{L^{2}}&\lesssim\|\hat{f}\|_{L^{1}}\|g\|_{\dot{H}^{r_{0}}} \nonumber \\
&\lesssim\bigg(\int_{|\xi|\leq1}\frac{1}{|\xi|^{\sigma-r_{1}}}|\xi|^{\sigma-r_{1}}|\hat{f}(\xi)|\,d\xi+\int_{|\xi|>1}\frac{1}{|\xi|^{\frac{3}{4}\sigma}}
\frac{1}{|\xi|^{\frac{1}{4}\sigma+r_{1}}}|\xi|^{\sigma+r_{1}}|\hat{f}(\xi)|\,d\xi\bigg)\|g\|_{\dot{H}^{r_{0}}} \nonumber \\
&\lesssim\big(\|\,\vert\nabla\vert^{\sigma-r_{1}}\hat{f}\|_{L^{2}}+\|\,\vert\nabla\vert^{\sigma+r_{1}}\hat{f}\|_{L^{2}}\big)\|g\|_{\dot{H}^{r_{0}}},
\end{align}
where $\max\{\frac{2}{3},2-4r_{1}\}<\sigma<1,\frac{1}{4}<r_{1}<-r_{0}$.\\
Next, for the case $|\eta|>2|\xi|$, we have $|\xi-\eta|\geq\frac{|\xi|}{2}$. Then it holds that
\begin{align}\label{5.5}
Q_{2}&\lesssim|\xi|^{r_{0}}\int_{|\eta|>2|\xi|}|\xi-\eta|^{-r_{0}}|\hat{f}(\xi-\eta)||\eta|^{r_{0}}|\hat{g}(\eta)|\,d\eta \nonumber \\
&\lesssim|\xi|^{r_{0}}H(f,g),
\end{align}
where $H(f,g):=\int_{\mathbb{R}^{2}}|\xi-\eta|^{-r_{0}}|\hat{f}(\xi-\eta)||\eta|^{r_{0}}|\hat{g}(\eta)|\,d\eta$.
\begin{align}\label{5.6}
\|Q_{2}\|_{L^{2}}^{2}&\lesssim\int_{|\xi|\leq1}|\xi|^{2r_{0}}|H|^{2}\,d\xi+\int_{|\xi|>1}|\xi|^{2r_{0}}|H|^{2}\,d\xi \nonumber \\
&\lesssim\|H\|_{L^{p_{0}}}^{2}+\|H\|_{L^{p_{1}}}^{2},
\end{align}
where $p_{0}, p_{1}$ are required to satisfy $2<\frac{2}{1+r_{0}}<p_{0}, 2<p_{1}<\frac{2}{1+r_{0}}$.\\
\begin{align}\label{5.7}
\|H\|_{L^{p_{0}}\cap L^{p_{1}}}&\lesssim\|\mathscr{F}(\vert\nabla\vert^{-r_{0}}f)(\xi)\|_{L^{\frac{2p_{0}}{p_{0}+2}}\cap L^{\frac{2p_{1}}{p_{1}+2}}}\|\mathscr{F}(\vert\nabla\vert^{r_{0}}g)(\xi)\|_{L^{2}} \nonumber \\
&\lesssim\|\mathscr{F}(\vert\nabla\vert^{-r_{0}}f)(\xi)\|_{L^{\frac{2p_{0}}{p_{0}+2}}\cap L^{\frac{2p_{1}}{p_{1}+2}}}\|\,\vert\nabla\vert^{r_{0}}g\|_{L^{2}},
\end{align}
Choosing $p_{0}, \sigma$ with $\frac{2}{\sigma+r_{0}+r_{1}}<p_{0}<\frac{2}{\sigma+r_{0}-r_{1}}, -r_{0}+r_{1}\leq\sigma<1$, we obtain
\begin{align}\label{5.8}
\|\mathscr{F}(\vert\nabla\vert^{-r_{0}}f)(\xi)\|_{L^{\frac{2p_{0}}{p_{0}+2}}}&=\|\,\vert\xi\vert^{-\sigma-r_{0}}\mathscr{F}(\vert\nabla\vert^{\sigma}f)(\xi)\|_{L^{\frac{2p_{0}}{p_{0}+2}}}\nonumber \\
&\lesssim\|\,\vert\xi\vert^{-\sigma-r_{0}+r_{1}}\mathscr{F}(\vert\nabla\vert^{\sigma-r_{1}}f)(\xi)\|_{L^{\frac{2p_{0}}{p_{0}+2}}(|\xi|\leq1)}\nonumber \\
&\ \ \ \ +\|\,\vert\xi\vert^{-\sigma-r_{0}-r_{1}}\mathscr{F}(\vert\nabla\vert^{\sigma+r_{1}}f)(\xi)\|_{L^{\frac{2p_{0}}{p_{0}+2}}(|\xi|>1)}\nonumber \\
&\lesssim\|\,\vert\xi\vert^{-\sigma-r_{0}+r_{1}}\|_{L^{p_{0}}(|\xi|\leq1)}\|\,\vert\nabla\vert^{\sigma-r_{1}}f\|_{L^{2}}\nonumber \\
&\ \ \ \ +\|\,\vert\xi\vert^{-\sigma-r_{0}-r_{1}}\|_{L^{p_{0}}(|\xi|>1)}\|\,\vert\nabla\vert^{\sigma+r_{1}}f\|_{L^{2}}\nonumber \\
&\lesssim\|\,\vert\nabla\vert^{\sigma}f\|_{\dot{H}^{-r_{1}}\cap\dot{H}^{r_{1}}}.
\end{align}
The estimate for $\|\mathscr{F}(\vert\nabla\vert^{-r_{0}}f)(\xi)\|_{L^{\frac{2p_{1}}{p_{1}+2}}}$ is also similar. We can choose $p_{1}, \sigma$ satisfying $\frac{2}{\sigma+r_{0}+r_{1}}<p_{1}<\frac{2}{\sigma+r_{0}-r_{1}}, -r_{0}+r_{1}\leq\sigma<1$. Then we can establish the estimate for $Q_{2}$. Together with (\ref{5.2}) and (\ref{5.4}), the proof of this proposition is achieved.

\end{proof}

Finally, we also provide the following useful regularity results for the Stokes problem:
\begin{Lemma}\label{le5.5}
Assume that $f\in L^{r}(\mathbb{R}^{n})$ with $2\leq r<\infty$. Let $(u,p)\in H^{1}(\mathbb{R}^{n})\times L^{2}(\mathbb{R}^{n})$ be the unique weak solution to the following Stokes problem
\begin{equation*}
\begin{cases}
-\mu\Delta u+\nabla p=f,\\
\diver u=0,\\
u(x)\rightarrow0, & x\rightarrow\infty.
\end{cases}
\end{equation*}
Then $(\nabla^{2}u,\nabla  p)\in L^{r}(\mathbb{R}^{n})$ and satisfies
\begin{equation*}
\|\nabla^{2}u\|_{L^{r}(\mathbb{R}^{n})}+\|\nabla p\|_{L^{r}(\mathbb{R}^{n})}
\lesssim \|f\|_{L^{r}(\mathbb{R}^{n})}.
\end{equation*}
\end{Lemma}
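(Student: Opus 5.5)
The statement to be proved is Lemma \ref{le5.5}, the $L^r$ regularity estimate for the whole-space Stokes system. I would eliminate the pressure and reduce everything to Calder\'on--Zygmund theory. Taking the divergence of the first equation and using $\diver u=0$ gives $\Delta p=\diver f$ in the sense of distributions. Hence
\begin{equation*}
\nabla p=\nabla\Delta^{-1}\diver f=-\mathcal{R}\otimes\mathcal{R}\, f,
\end{equation*}
where $\mathcal{R}_i=(-\Delta)^{-\frac{1}{2}}\nabla_i$ are the Riesz transforms already used in the paper. Equivalently, $\nabla p=(\mathbb{I}-\mathbb{P})f$, with $\mathbb{P}$ the Helmholtz projection from (\ref{2.5}). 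Substituting back, $-\mu\Delta u=\mathbb{P}f$. This yields
\begin{equation*}
\nabla^{2}u=\mu^{-1}\nabla^{2}(-\Delta)^{-1}\mathbb{P}f,
\end{equation*}
and each entry of this is a finite composition of Riesz transforms $\mathcal{R}_i\mathcal{R}_j\mathcal{R}_k\mathcal{R}_l$ applied to $f$.

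The key steps are as follows.

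\begin{enumerate}
\item Justify the representation. Take a weak solution $(u,p)\in H^1\times L^2$ with $u\to0$ at infinity. In Fourier variables the system reads $\mu|\xi|^2\hat u+i\xi\hat p=\hat f$ together with $\xi\cdot\hat u=0$. This determines $\hat p=-i\xi\cdot\hat f/|\xi|^2$ and $\hat u=\mu^{-1}|\xi|^{-2}\big(\mathbb{I}-\xi\otimes\xi/|\xi|^2\big)\hat f$, up to polynomials. The decay and integrability assumptions on $(u,p)$ rule out the polynomial ambiguity; for $p$ only $\nabla p$ matters, so constants are harmless. I would carry this out first for $f\in C_c^\infty$, or for $f$ in the Schwartz class, so that all Fourier manipulations are legitimate.
\item Apply the classical Calder\'on--Zygmund/Mikhlin multiplier theorem. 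The symbols $\xi_i\xi_j/|\xi|^2$ and $\xi_i\xi_j\xi_k\xi_l/|\xi|^4$ are homogeneous of degree zero and smooth away from the origin, so they define bounded operators on $L^r(\mathbb{R}^n)$ for $1<r<\infty$. In particular this covers $2\le r<\infty$, giving
\begin{equation*}
\|\nabla p\|_{L^r}+\|\nabla^2u\|_{L^r}\lesssim\|f\|_{L^r}
\end{equation*}
for smooth $f$.
\item Extend to general $f\in L^r$ by density and pass to the limit in the weak formulation. By uniqueness of the weak solution in $H^1\times L^2$, the limit coincides with the given $(u,p)$, so the estimate holds for it.
\end{enumerate}

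The main obstacle is step 3, the identification of the given weak solution with the one constructed from the multiplier formula. The difficulty is that $f$ lies only in $L^r$, not necessarily in $L^2$ or $\dot H^{-1}$. I would handle this by testing the equation against the kernel representation and using a Liouville-type argument. The difference of two solutions satisfies the homogeneous Stokes system, so each component of the velocity difference is harmonic. Since it lies in $H^1$ and tends to zero at infinity, it vanishes. The pressure difference then has zero gradient, so it is constant, and $\nabla p$ is determined. Everything else is a direct quotation of the $L^r$ boundedness of Riesz transforms, as in \cite{Galdi2011}.
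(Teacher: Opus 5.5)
The paper gives no argument of its own here; it cites the whole-space Stokes theory in \cite{Galdi2011}, where the bound comes from Calder\'on--Zygmund estimates for the second derivatives of the Stokes potential. Your Riesz-transform/Mikhlin computation in steps 1--2 is the same mechanism and is fine for the quantities you estimate.

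The gap is in step 3, and your Liouville argument does not close it. In $\mathbb{R}^2$, the case the paper uses, a generic $f_k\in C_c^\infty$ with $\int f_k\neq0$ admits no solution in $H^1\times L^2$ at all. Indeed, $|\xi|^{-2}\big(\mathbb{I}-\frac{\xi\otimes\xi}{|\xi|^2}\big)\hat f_k$ is not locally integrable at $\xi=0$, and the velocity generated by the Stokeslet grows like $\log|x|$ (the Stokes paradox). So your approximate solutions, and any limit of them, are determined only through $\nabla^2u_k$ and $\nabla p_k$. They neither lie in $H^1$ nor tend to zero at infinity, and uniqueness in $H^1\times L^2$ says nothing about their difference with the given $(u,p)$. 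The Liouville step is also in the wrong order. For $-\mu\Delta w+\nabla q=0$, $\diver w=0$, you only get $\Delta q=0$ and $\mu\Delta w=\nabla q$, so $w$ is biharmonic; it becomes harmonic only once $\nabla q=0$ is known, not the other way round.

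The repair is easy and makes the density step unnecessary: work with the given solution directly. Since $u\in H^1$ and $p\in L^2$ are tempered, $f=-\mu\Delta u+\nabla p\in H^{-1}\cap L^r$. Fourier transforming the weak formulation gives, a.e. as identities between locally integrable functions,
\[
\mu|\xi|^{2}\hat u+i\xi\,\hat p=\hat f,\qquad \xi\cdot\hat u=0.
\]
Hence $\widehat{\nabla p}=\frac{\xi\otimes\xi}{|\xi|^2}\hat f$ and $\widehat{\partial_k\partial_l u}=-\mu^{-1}\frac{\xi_k\xi_l}{|\xi|^2}\big(\mathbb{I}-\frac{\xi\otimes\xi}{|\xi|^2}\big)\hat f$. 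There is no polynomial ambiguity, because $\hat u,\hat p\in L^2$ carry no mass at the origin.

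It remains to check that the $H^{-1}$- and $L^r$-realizations of these degree-zero multipliers agree on $H^{-1}\cap L^r$. This holds because $f*\varphi_\varepsilon\in L^2\cap L^r$ and converges to $f$ in both norms.

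Alternatively, keep your density scheme but apply Liouville to the right objects. Let $\Pi,V\in L^r$ be the limits of $\nabla p_k$ and $\nabla^2u_k$. Apply Liouville first to $\nabla p-\Pi$ and then to $\nabla^2u-V$: these are harmonic tempered distributions, hence polynomials, and a polynomial lying in $H^{-1}+L^r$ must vanish.
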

\begin{proof}
See Lemma 4.3 (p. 322) in \cite{Galdi2011}.
\end{proof}

\section*{Acknowledgements}

This work was partially supported by National Key R\&D Program of China (No. 2021YFA1002900), Guangzhou City Basic and Applied Basic Research Fund (No. 2024A04J6336), Yunnan Fundamental Research Projects (grant No. 202501AU070056), National Natural Science Foundation of China (No. 12161019), Guizhou Provincial Basic Research Program (Natural Science) ( No. QKHJC-ZK [2022] YB 318), Natural Science Research Project of Guizhou Provincial Department of Education (No. QJJ [2023] 011), and Academic Young Talent Fund of Guizhou Normal University (No. QSXM [2022] 03).

\bigskip

{\bf Data Availability:} Data sharing is not applicable to this article.

\bigskip

{\bf Conflict of interest:} This work does not have any conflicts of interest.

\end{document}